\documentclass[10pt,a4paper]{amsart}
\usepackage{fullpage} 
\usepackage{amsmath}
\usepackage{amsthm}
\usepackage{amssymb}
\usepackage{mathtools}
\usepackage[only,mapsfrom]{stmaryrd}
\usepackage{graphicx}
\usepackage{cancel} 
\usepackage{color} 
\usepackage[all]{xy} 
\usepackage{tikz}
\usetikzlibrary{arrows,calc}
\usetikzlibrary{cd}
\usepackage{nicefrac}
\usepackage{paralist}
\usepackage{enumitem}
\usepackage[colorlinks,citecolor=blue,linkcolor=blue,urlcolor=blue,filecolor=blue,breaklinks]{hyperref}
\usepackage{verbatim}  %includes comment environment

\setcounter{tocdepth}{1}

\numberwithin{equation}{section}
\numberwithin{figure}{section}

{\begin{compactitem}

}%
{\end{compactitem}}

\newtheorem{thm}{Theorem}[section]

\newtheorem{lem}[thm]{Lemma}
\newtheorem{prop}[thm]{Proposition}

\newtheorem{ques}[thm]{Question}

\newtheorem{obs}[thm]{Observation}
\newtheorem*{introthm}{Theorem}
\newtheorem*{thm*}{Theorem}
\newtheorem*{conj*}{Conjecture}
\newtheorem*{cor*}{Corollary}
\newtheorem*{ques*}{Question}

\newtheorem*{namedthm}{\namedthmname}

\theoremstyle{definition}

\newtheorem{defn}[thm]{Definition}

\newtheorem*{rem*}{Remark}

% mathbb and mathcal %

\newcommand{\cC}{\mathcal{C}}

\newcommand{\cM}{\mathcal{M}}

\newcommand{\cR}{\mathcal{R}}
\newcommand{\cS}{\mathcal{S}}
\newcommand{\cT}{\mathcal{T}}

\newcommand{\bN}{\mathbb{N}}

\newcommand{\bR}{\mathbb{R}}

\newcommand{\bZ}{\mathbb{Z}}

\newcommand{\End}{{\mathrm{End}}}

\DeclareMathOperator{\Map}{Map}

\DeclareMathOperator{\Homeo}{Homeo}

\DeclareMathOperator{\genus}{genus}

\newcommand{\incl}[3][right]%
{%
\draw[<-,>=#1 hook] #2 to ($ #2!0.5!#3 $);
\draw[->] ($ #2!0.5!#3 $) to #3;%
}

\newcommand{\abs}[1]{\left \vert {#1} \right \vert}
\newcommand{\s}[1]{\left\{ {#1} \right\}}
\newcommand{\parens}[1]{\left( {#1} \right)}

\title{The number of ends of big mapping class groups}

\author{Josiah Oh}
\address{Shanghai Center for Mathematical Sciences, Jiangwan Campus, Fudan University, No.2005 Songhu Road, Shanghai, 200438, P.R. China}
\email{josiahoh@gmail.com}

\author{Yulan Qing}
\address{Department of Mathematics, University of Tennessee at Knoxville, 1403 Circle Drive, Knoxville, Tennessee}
\email{yqing@utk.edu}

\author{Xiaolei Wu}
\address{Shanghai Center for Mathematical Sciences, Jiangwan Campus, Fudan University, No.2005 Songhu Road, Shanghai, 200438, P.R. China}
\email{xiaoleiwu@fudan.edu.cn}

\subjclass[2020]{57K20, 20J06}
\keywords{Big mapping class groups, number of ends of groups}
\date{Oct. 2025}

\begin{document}

\begin{abstract}
We analyze the number of ends of the mapping class group of a stable avenue surface. We prove that the mapping class group is one-ended whenever the stable avenue surface has at least one end of discrete type. Our method is to show that the associated translatable curve graph, which is quasi-isometric to the mapping class group, is one-ended.
\end{abstract}
\maketitle

\section{Introduction}

A central theme in geometric group theory is the \emph{large-scale geometry} of groups. By this we mean the metric structure that is preserved under quasi-isometries—maps which preserve distances up to a uniformly bounded multiplicative and additive error (Section \ref{sec:quasi-isometry}). In a 1983 ICM address~\cite{gromov-icm}, Gromov advocated the study of finitely generated groups as geometric objects and their classification up to quasi-isometry. Among the earliest and most classical quasi-isometric invariants is the \emph{number of ends}. The number of ends of a simplicial graph is the number of topologically distinct ways to escape to infinity, and the number of ends of a finitely generated group is defined to be the number of ends of its Cayley graph with respect to some (any) finite generating set (Section \ref{sec:ends of graphs}). Freudenthal \cite{Freudenthal} and Hopf \cite{Hopf} independently proved that the number of ends of a finitely generated infinite group is always either one, two, or infinite, and the group has exactly two ends if and only if it is virtually infinite cyclic. A celebrated theorem of Stallings~\cite{Stallings68, MR415622} provides a complete characterization of the case of infinitely many ends. It states that a finitely generated group has more than one end if and only if it splits over a finite group, or equivalently, it admits an action on a simplicial tree with finite edge-stabilizers and without edge-inversions or a global fixed vertex. More generally, countably infinite groups, even when not finitely generated, enjoy the same trichotomy on their number of ends \cite{yves}. 

It is a classical fact that mapping class groups of finite-type surfaces, except for some cases when the surface has low complexity, are all one-ended (all but a few even have geodesic divergence bounded above by a quadratic function \cite{DuchinRafi}). So the goal of this paper is to investigate the number of ends of \emph{big mapping class groups}, i.e., mapping class groups of infinite-type surfaces (Section \ref{sec:surfaces and mapping class groups}). Interest in big mapping class groups grew significantly after Bavard \cite{Bavard16} proved that the ray graph is Gromov hyperbolic, analogous to the famous theorem of Masur--Minsky \cite{MasurMinsky} that the curve graph is Gromov hyperbolic. Since then, much effort has been made to understand big mapping class groups and to compare them with the better-studied mapping class groups of finite-type surfaces (see for example, \cite{Long, Wu1, Israel}). 

Geometric group theory typically concerns finitely (or compactly) generated groups because it is well-known that the word metrics associated to any two finite (or compact) generating sets are quasi-isometric to each other. Big mapping class groups are not finitely or countably infinitely generated, nor are they compactly generated. So a priori, they do not have a well-defined geometry. However, Rosendal \cite{Rosendal} developed ideas which allow for the study of the large-scale geometry of topological groups which may not be locally compact, let alone finitely generated. Specifically, Rosendal proved that for groups which admit a \textit{coarsely bounded} (CB) generating set (Section \ref{sec:CB}), it is also the case that the word metrics associated to any two CB generating sets are quasi-isometric to each other. We remark that a subset of a locally compact space is CB if and only if it is compact, so this notion does indeed generalize older ones. Recent work of Mann--Rafi \cite{MannRafi} applied this idea to classify (many of) the big mapping class groups which are CB generated, and therefore have a well-defined quasi-isometry type. Using their tools and classification, several results have shown that these groups admit a rich geometric structure \cite{GRV, HQR, RoMo25}. See also \cite{Hill25} for the case of pure mapping class groups.

Within this framework, the number of ends of a CB generated group may be defined to be the number of ends of its Cayley graph with respect to some (any) CB generating set. Then it is natural to ask: \emph{How many ends do CB generated big mapping class groups have?} A theorem in \cite{OhPengitore22} states that the number of ends of a connected, unbounded, (coarsely) transitive graph is one, two, or infinite, generalizing the theorem of Freudenthal--Hopf. The theorem statement includes an assumption that the graph is locally finite, but in fact, this assumption is unnecessary as it is never used in the proof. The Cayley graph of a CB generated mapping class group is connected and transitive. So even though it is locally infinite (in general, the degree of each vertex is uncountable), we may conclude the following.

\begin{obs}
The number of ends of a CB generated mapping class group is zero, one, two, or infinite. The number of ends is zero if and only if the group is CB.
\end{obs}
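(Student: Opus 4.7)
The plan is to reduce the observation to the generalization of the Freudenthal--Hopf trichotomy from \cite{OhPengitore22} recalled in the paragraph above. Fix a CB generating set $S$ for the mapping class group $G$, and let $\Gamma := \mathrm{Cay}(G,S)$ denote the associated Cayley graph. Since $S$ generates $G$, the graph $\Gamma$ is connected, and since $G$ acts on itself by left multiplication, $\Gamma$ is vertex-transitive. By Rosendal's theorem the quasi-isometry type of $\Gamma$, and in particular its number of ends, is independent of the chosen CB generating set, so this yields a well-defined invariant of $G$.

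I would then split into two cases depending on whether $G$ is CB. If $G$ is CB, then $\Gamma$ has bounded diameter in the word metric $d_S$ (one can even take $S = G$, in which case $\Gamma$ has diameter one), and a connected graph of bounded diameter has empty end space. Hence $\Gamma$ has zero ends. Conversely, if $G$ is not CB then $\Gamma$ is unbounded, so all hypotheses of the theorem from \cite{OhPengitore22} are satisfied: $\Gamma$ is connected, (coarsely) transitive, and unbounded. Using the observation in the paragraph above that the local finiteness hypothesis is never actually invoked in the proof, we conclude that $\Gamma$ has one, two, or infinitely many ends. Combining the two cases yields the claimed list of possibilities $\{0,1,2,\infty\}$, with zero ends occurring precisely when $G$ is CB.

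The main conceptual obstacle is the verification that the proof in \cite{OhPengitore22} truly goes through without local finiteness, since the published statement there is restricted to locally finite graphs. This is the only nontrivial input, and it has already been addressed in the preceding paragraph (so no new work is required in the proof itself). Everything else is a routine application of Rosendal's framework together with the fact that the number of ends is a quasi-isometry invariant of a connected graph.
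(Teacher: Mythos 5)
Your argument is correct and follows exactly the route the paper takes: it justifies the observation by appealing to the Freudenthal--Hopf generalization of \cite{OhPengitore22} (noting that local finiteness is never used in that proof), combined with Rosendal's theorem to make the invariant well-defined, and handles the zero-ends case by the boundedness of the Cayley graph when the group is CB. No discrepancies to report.
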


A surface $\Sigma$ of infinite type falls into one of two classes depending on whether or not it contains a finite-type \textit{non-displaceable subsurface}---a subsurface which intersects its image under every homeomorphism of $\Sigma$. According to recent work, the large-scale geometry of the big mapping class group $\Map(\Sigma)$ often depends on whether or not $\Sigma$ contains a non-displaceable subsurface of finite type. For example, if $\Sigma$ does contain a non-displaceable subsurface of finite type, then $\Map(\Sigma)$ is not CB \cite{MannRafi} and it admits a continuous and nonelementary action by isometries on a Gromov hyperbolic space \cite{HQR} (the authors also prove that under an extra condition, $\Map(\Sigma)$ admits such an action only if $\Sigma$ contains a non-displaceable subsurface). On the other hand, if $\Sigma$ does not contain a non-displaceable subsurface of finite type and $\Map(\Sigma)$ is not CB, then $\Map(\Sigma)$ has infinite asymptotic dimension \cite{GRV}.

In an effort to further classify the number of ends of big mapping class groups, we focus on those surfaces $\Sigma$ which do not contain a non-displaceable subsurface of finite type. Since we are attempting to analyze the large-scale geometry of $\Map(\Sigma)$, we are only interested in the surfaces for which $\Map(\Sigma)$ is CB generated, but not CB itself. A broad class of surfaces which satisfy these conditions is the class of \textit{avenue surfaces} (Section \ref{sec:avenue surfaces}). Avenue surfaces were defined in \cite{HQR}, and the only additional condition imposed on them is a weak topological condition called \textit{tameness} on the space of ends of $\Sigma$. Tameness is rather technical and lengthy to define, so we choose to work within the slightly more restrictive, but much more easily defined, class of \textit{stable} surfaces (Section \ref{sec:surface ends}). Stable surfaces form a substantial subclass of tame surfaces, and it has become somewhat standard to work with them. For example, several other works in the literature \cite{Bar-NatanVerberne,GRV,BDR} similarly restrict to stable surfaces in order to conveniently work within the framework developed in \cite{MannRafi}. That being said, we expect our results to also hold for surfaces with tame end space.

Every avenue surface has two distinct ends $e_+$ and $e_-$ which are maximal with respect to the partial pre-order $\preceq$ on $\End(\Sigma)$ defined in \cite{MannRafi}. It follows from \cite{Schaffer-Cohen} that for each end $x$ which is an immediate predecessor of $e_+$ (and $e_-$), the equivalence class of $x$ is either a discrete set or a totally disconnected perfect set, where the equivalence relation is defined by $\preceq$. In the first case, we say the end $x$ is of \textit{discrete type}. We can now state our main theorem.

\begin{introthm}
If $\Sigma$ is a stable avenue surface with a discrete-type end, then $\Map(\Sigma)$ is one-ended.
\end{introthm}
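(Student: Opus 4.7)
The plan is to use the stated quasi-isometry between $\Map(\Sigma)$ and the translatable curve graph $\mathcal{T}(\Sigma)$ — whose vertices are translatable curves (curves separating the two maximal ends $e_\pm$) and whose edges record disjointness. Since the number of ends is a quasi-isometry invariant among connected graphs, it suffices to show $\mathcal{T}(\Sigma)$ is one-ended. The graph is connected by standard surface-topology arguments and is unbounded because $\Map(\Sigma)$ is not CB on an avenue surface. So the task reduces to verifying that for every bounded set $B\subset\mathcal{T}(\Sigma)$, any two translatable curves sufficiently far from $B$ can be joined by a path in $\mathcal{T}(\Sigma)\setminus B$.

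The hypothesis that $\Sigma$ has a discrete-type end is used to build a \emph{translation} $T\in\Map(\Sigma)$ along the avenue. If $x$ is a discrete-type immediate predecessor of, say, $e_+$, then a neighborhood $U$ of $e_+$ contains an isolated bi-infinite sequence $\{x_n\}_{n\in\BZ}$ of $\preceq$-equivalent ends. Stability and the discrete-type condition should allow one to construct $T$ supported in a subsurface of $U$ that shifts $x_n \mapsto x_{n+1}$, and to pick a translatable curve $\gamma_0$ separating $x_0$ from $x_1$. Then $\{T^n\gamma_0\}_{n\in\BZ}$ is a bi-infinite path in $\mathcal{T}(\Sigma)$ — an ``axis'' — whose vertices with $|n|$ large are supported arbitrarily deep in $U$, hence disjoint from the supports of curves representing any fixed bounded set $B$.

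With $T$ in hand I would prove one-endedness in two steps. \textbf{Step 1 (connect to the axis).} Given a translatable curve $\alpha$ outside $B$, I would construct a path from $\alpha$ to some $T^N\gamma_0$ inside $\mathcal{T}(\Sigma)\setminus B$. The idea is that, since $B$ is bounded, there is a finite-type subsurface $\Sigma_B\subset\Sigma$ carrying representatives of all vertices of $B$; using stability of $\End(\Sigma)$ and the discrete-type structure near $e_+$, one can find a translatable curve $\delta$ disjoint from $\alpha\cup\Sigma_B$ and supported so that $\delta$ is already far from $B$ in $\mathcal{T}(\Sigma)$, then stitch $\delta$ to the axis by finitely many pairwise disjoint translatable curves lying in $U$. \textbf{Step 2 (travel along the axis).} If $\alpha,\beta$ are two curves outside $B$, Step 1 joins each to curves $T^{N_1}\gamma_0,T^{N_2}\gamma_0$ with $N_1\le N_2$; the axis segment $T^{N_1}\gamma_0,T^{N_1+1}\gamma_0,\ldots,T^{N_2}\gamma_0$ completes the path and lies outside $B$ as soon as $N_1$ is large enough that each $T^n\gamma_0$ is disjoint from $\Sigma_B$.

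The main technical obstacle is Step 1: producing a path from a generic $\alpha$ to the axis while avoiding $B$. ``Avoiding'' $B$ is a statement in the combinatorial graph $\mathcal{T}(\Sigma)$, not merely about set-theoretic disjointness of supports, so one needs sufficient flexibility in the intermediate translatable curves. The discrete-type hypothesis is exactly what supplies this: the infinitely many isolated $x_n$'s give infinitely many homotopically distinct translatable curves through $U$, and only finitely many of them can be close to $B$ in $\mathcal{T}(\Sigma)$. I expect the crux of the argument to be a careful change-of-coordinates using $\Map(\Sigma)$ to bring $\alpha$ into a ``standard position'' relative to the axis while simultaneously respecting $\Sigma_B$ — a move whose validity rests on the change-of-coordinates principle for stable avenue surfaces, together with the non-displaceability failure that makes translations available in the first place.
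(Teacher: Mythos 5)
Your opening moves match the paper's: reduce to the translatable curve graph via the quasi-isometry of Schaffer-Cohen, and use a translation to move one of the two curves far along the avenue (your ``axis'' $\{T^n\gamma_0\}$ plays the role of the translated copies $h^k(\beta)$ in the paper, though note that consecutive translates need not be adjacent in $\cT\cC(\Sigma)$ in general, since $[\gamma_0,T\gamma_0]\cong S$ is usually not one of the model pieces $T_i$; this is fixable). The genuine gap is your Step 1, which you correctly identify as the crux but do not actually prove, and the sketch you give for it rests on a false premise. A bounded set of $\cT\cC(\Sigma)$ --- say the ball $B(o,R)$ --- is \emph{not} carried by any finite-type subsurface $\Sigma_B$: every vertex has uncountable degree, and the neighbors of $o$ already include lasso curves reaching out to arbitrarily distant discrete-type ends, so their supports exhaust the whole surface. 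Consequently ``disjoint from $\Sigma_B$'' is not available as a certificate for ``outside $B$'', and more fundamentally, topological disjointness and graph distance are decoupled in both directions: a curve can be supported entirely in a small neighborhood of $o$ yet satisfy $d(\alpha,o)=6R$ (by toggling many discrete-type ends near $o$), and then \emph{every} path from such an $\alpha$ out to the deep part of $U$ must pass through curves that are topologically entangled with $o$, so one needs a quantitative lower bound on their graph distance from $o$, not a disjointness statement.

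That quantitative control is exactly what the paper supplies and what your proposal lacks. The paper introduces two pseudometrics, the flux $F$ and the Hamming distance $H$ (counting symmetric differences of the sets of discrete-type ends on one side, plus a genus term), proves $d\ge H\ge F$, and then uses the \emph{lassoing} operation: given a straight geodesic from $\alpha$ to the pushed-out $\beta'$, lassoing a subsurface $C_\eta\cong\widehat T_i$ containing a discrete-type end produces a parallel adjacent straight path each of whose vertices has Hamming distance to $o$ increased by exactly $1$ (this is where the discrete-type hypothesis enters --- it guarantees $|\End_d(C_\eta)|\ge 1$). Iterating $2R$ times pushes the entire path outside $B(o,R)$, and the ladder of connecting edges created along the way gives the two paths from $\alpha$ and from $\beta'$ to the pushed-out path, whose vertices are again certified to avoid $B$ by the Hamming lower bound. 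Without some substitute for this mechanism --- a computable lower bound on $d$ that increases under a controlled path-deformation --- your ``stitch $\delta$ to the axis'' step, and in particular your assertion that only finitely many of the intermediate curves can be close to $B$, has no justification.
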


The most basic example of a surface covered by this theorem is the bi-infinite flute surface. This surface indeed has exactly two maximal ends, and all other ends, represented by punctures, form a single equivalence class homeomorphic to $\bZ$. On the other hand, one surface which is not covered by our theorem is the Jacob's ladder surface, which has exactly two ends, each accumulated by genus. It may seem at first that the Jacob's ladder surface should be no more difficult to handle than the bi-infinite flute surface. However, one significant difference is that in the bi-infinite flute surface, curves cannot meaningfully ``interact" with the punctures, since the punctures are not points on the surface, but in the Jacob's ladder surface, curves may ``interact" with the handles in a highly non-trivial way. We do not know whether our methods may be modified to deal with this additional layer of complexity, so we ask the following.

\begin{ques}
    Is the mapping class group of the Jacob's ladder surface one-ended? More generally, if $\Sigma$ is a stable avenue surface without any discrete-type ends, is $\Map(\Sigma)$ one-ended?
\end{ques}

A common strategy to study mapping class groups is to construct appropriate metric spaces on which the groups act with good geometric properties. For finite-type surfaces, these spaces are often graphs whose vertices are represented by curves and arcs on the surface, and the fundamental example is the curve graph defined in~\cite{Harvey}. Avenue surfaces fall in the broader class of \textit{translatable surfaces}, for which Schaffer-Cohen~\cite{Schaffer-Cohen} introduced the \emph{translatable curve graph}, whose vertices correspond to simple closed curves which separate the two maximal ends, and whose edges connect pairs of vertices whenever the corresponding curves bound a subsurface homeomorphic to one of a pre-chosen finite collection of subsurfaces (Section \ref{sec:translatable curve graph}). The translatable curve graph is not only an analogue of the curve graph in the setting of translatable surfaces; it is even quasi-isometric to the associated mapping class group \cite{Schaffer-Cohen}. So every property which is invariant under quasi-isometry holds for the mapping class group precisely when it holds for the translatable curve graph. Therefore, we directly analyze this graph in order to determine the number of ends of the mapping class group. To this end, we devise an operation called \textit{lassoing} (Section \ref{sec:lasso and flux}) that takes a vertex in the graph to an adjacent vertex and, more generally, takes a (well-behaved) path to an adjacent path. Furthermore, we find some lower bounds on the graph distance, called \textit{flux} (Section \ref{sec:lasso and flux}) and \textit{Hamming distance} (Section \ref{sec:hamming distance}), that are crucial to proving the graph is one-ended. Section~\ref{sec3} presents a proof of the theorem in the case of the bi-infinite flute, capturing many of the key ideas, and the theorem is proved in full in Section~\ref{sec4}. 

We finish with additional questions that arose during our investigation.

\subsection*{Open Questions}

This paper concerns only those surfaces which do not contain a non-displaceable subsurface of finite type. This, of course, raises the following question.

\begin{ques}
    Can one classify the number of ends of $\Map(\Sigma)$ when $\Sigma$ is an infinite-type surface which contains a non-displaceable subsurface of finite type?
\end{ques}

In particular, this question is open for the particular surface studied in \cite{Bavard16} that initiated much of the recent interest in big mapping class groups.

\begin{ques}
Let $\Sigma = \mathbb{R}^2 \setminus \mathcal{C}$ be the plane minus a Cantor set. How many ends does $\Map(\Sigma)$ have?
\end{ques}

Schaffer-Cohen proved that $\Map(\Sigma)$ is quasi-isometric to the loop graph \cite[Theorem 6.5]{Schaffer-Cohen}, which, in turn, is quasi-isometric to the ray graph~\cite[Proposition~3.11]{Bavard16}. Bavard--Walker gave a characterization of the Gromov boundary of the ray graph in~\cite{BavardWalker18}. However, their results do not immediately determine the number of ends of the ray graph.

A question that approaches the subject from another angle is:

\begin{ques}
Does there exist a big mapping class group with more than one end?
\end{ques}

Recall that a finitely generated group is two-ended if and only if it contains an infinite cyclic subgroup of finite index. This motivates the next question.

\begin{ques}
    Does there exist a big mapping class group with exactly two ends? If one exists, what can be said about its algebraic structure?
\end{ques}

\subsection*{Acknowledgments}
We thank Kasra Rafi for suggesting the notion of flux to us, and we thank Anschel Schaffer-Cohen for helpful discussions. Xiaolei Wu is currently a member of LMNS and is supported by NSFC No.\,12326601.

\section{Preliminaries and set-up}

\subsection{Quasi-isometry}\label{sec:quasi-isometry}

Let $f \colon (X, d_X) \to (Y, d_Y)$ be a map on metric spaces. If $L \geq 1$ and $A \geq 0$, we say that $f$ is an $(L,A)$\textit{-quasi-isometric embedding} if for every $a,b \in X$,
$$
\frac{1}{L} d_X(a,b) - A \leq d_Y(f(a), f(b)) \leq L d_X(a,b) + A.
$$
An $(L,A)$-quasi-isometric embedding $f$ is an $(L,A)$-\textit{quasi-isometry} if there is an $(L',A')$-quasi-isometric embedding $g : Y \to X$ such that $d_X(g\circ f,\text{Id}_X) < \infty$ and $d_Y(f\circ g,\text{Id}_Y) < \infty$, and we call $g$ a \textit{quasi-inverse} of $f$. Equivalently, an $(L,A)$-quasi-isometric embedding $f$ is an $(L,A)$-quasi-isometry if it is \textit{coarsely surjective}, that is, if there is a $C \ge 0$ such that the image of $f$ is $C$-dense in $Y$. A map $f \colon X \to Y$ is a \textit{quasi-isometry} between $X$ and $Y$ if it is an $(L,A)$-quasi-isometry for some $L \ge 1, A \ge 0$. Two metric spaces $X$ and $Y$ are \textit{quasi-isometric} if there exists a quasi-isometry between them.

\subsection{Number of ends of a graph}\label{sec:ends of graphs}

A \textit{graph} $X$ is a pair of sets $(V,E)$ where $E$ is a set of subsets of $V$ containing exactly two elements. That is 
\[ E \subset \{e\mid e\subset V, |e| =2\}.\]
We call $V=V(X)$ the set of \textit{vertices} and $E=E(X)$ the set of \textit{edges}. Given an edge $\s{x,y}$, we call $x$ and $y$ the \textit{endpoints} of $\s{x,y}$ and we say that $x$ and $y$ are \textit{adjacent} or \textit{neighbors}. A \textit{graph isomorphism} between graphs $X$ and $Y$ is a bijection $f : V(X) \to V(Y)$ such that $x$ and $y$ are adjacent if and only if $f(x)$ and $f(y)$ are adjacent. A \textit{graph automorphism} of a graph $X$ is a graph isomorphism from $X$ to itself. A graph is \textit{vertex-transitive} (or \textit{transitive}) if its automorphism group acts transitively on its vertices. We emphasize that we do not assume our graphs are locally finite. In fact, every vertex in our graphs of interest will have uncountable degree.

A \textit{path} between two vertices $x$ and $y$ is a sequence $(v_0,v_1,\dots,v_n)$ of vertices such that $v_0 = x$, $v_n = y$, and consecutive vertices are connected by an edge, i.e. $\{v_i,v_{i+1}\} \in E$ for $i=0,\cdots,n-1$. A graph is \textit{connected} if any two vertices can be connected by a path. Any connected graph $X$ is also a metric space via the path metric which defines the distance between two vertices to be the length of a shortest path between them. If $S \subset V(X)$, then the \textit{subgraph of $X$ induced by $S$} is the graph whose vertex set is $S$ and whose edge set is the subset of edges in $E(X)$ that have both endpoints in $S$. We re-use the symbol $S$ to denote this induced subgraph, and we denote by both $X \setminus S$ and $S^\mathsf{c}$ the subgraph of $X$ induced by $V(X) \setminus S$.

For a connected graph $X$ and subset $S\subset V(X)$, denote by $U(X,S)$ (or $\pi_0^u(S^\mathsf{c})$) the set of unbounded connected components of the subgraph $X \setminus S$. Define the \textit{number of ends} of $X$, denoted by $e(X)$, by
\[
    e(X) = \sup\s{\abs{U(X,B)} : B \text{ is a bounded subset of } X}.
\]

The number of ends turns out to be invariant under quasi-isometry. This statement is a well-known fact under conditions such as local finiteness, or local compactness for more general spaces, and proofs may be readily found in the literature (for example, \cite[Lemma 9.5]{drutu-kapovich}). This is because finitely generated groups are usually the objects of interest, and their Cayley graphs are locally finite. To the authors' knowledge, analogous statements without assuming conditions such as local compactness do not exist in the literature, so we include a proof here for the sake of completeness. 

Given a vertex $x$ and radius $r\ge0$, the ball of radius $r$ centered at $x$ is
\[
    B(x,r) = \s{y \in V(X) : d(x,y) \le r}.
\]
Note that if $B_1$ and $B_2$ are bounded subsets of $V(X)$, and $B_1 \subset B_2$, then $\abs{U(X,B_1)} \le \abs{U(X,B_2)}$. Since balls are bounded, and bounded sets are contained in balls, we have the following lemma.

\begin{lem}
Let $x \in V(X)$ be any vertex. Then
\[
    e(X) = \sup_{r>0} \abs{U(X,B(x,r))} = \sup_{\substack{r>0 \\ y\in X}}\abs{U(X,B(y,r))}.
\]
\end{lem}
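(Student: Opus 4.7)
The plan is to prove the two claimed equalities via the monotonicity observation noted just before the lemma, combined with the elementary fact that any bounded subset of $V(X)$ sits inside a ball centered at any prescribed vertex. Since the lemma is essentially unpacking the definition of $e(X)$ and using that balls form a cofinal family among bounded sets under inclusion, I expect the argument to be short and the only real task is to string the inclusions together carefully.

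First I would establish the inequality $\sup_{r>0}\abs{U(X,B(x,r))} \leq e(X)$. This is immediate from the definition of $e(X)$, since each ball $B(x,r)$ is a bounded subset of $V(X)$, so each $\abs{U(X,B(x,r))}$ appears among the cardinalities over which the supremum defining $e(X)$ is taken. For the reverse inequality, I would take an arbitrary bounded subset $B \subseteq V(X)$. Since $B$ has finite diameter, choosing any $b_0 \in B$ and letting $D = \sup_{b \in B} d(b_0, b) < \infty$, we get $B \subseteq B(b_0, D)$. Then by the triangle inequality
\[
    B \subseteq B(b_0, D) \subseteq B(x, D + d(x,b_0)),
\]
so applying the monotonicity of $\abs{U(X,\cdot)}$ under inclusion (recalled just before the lemma) gives $\abs{U(X,B)} \le \abs{U(X,B(x, D+d(x,b_0)))} \le \sup_{r>0}\abs{U(X,B(x,r))}$. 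Taking the supremum over all bounded $B$ yields the first equality.

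For the second equality, the inequality $\sup_{r>0}\abs{U(X,B(x,r))} \leq \sup_{r>0,\,y\in X}\abs{U(X,B(y,r))}$ is trivial as the left side is a restriction of the right. Conversely, for any $y \in V(X)$ and $r > 0$, the containment $B(y,r) \subseteq B(x, r + d(x,y))$ together with monotonicity gives $\abs{U(X,B(y,r))} \le \abs{U(X,B(x, r + d(x,y))))} \le \sup_{r'>0}\abs{U(X,B(x,r'))}$, and taking the supremum over $y$ and $r$ completes the argument.

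There is no serious obstacle here; the lemma is really a sanity check that centering the balls at a single vertex $x$ loses no information. The only minor point worth being careful about is that the supremum may be infinite, but the inequalities above are valid as inequalities of extended non-negative integers, so this causes no issue.
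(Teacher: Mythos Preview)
Your proof is correct and takes essentially the same approach as the paper, which simply notes ``Since balls are bounded, and bounded sets are contained in balls, we have the following lemma'' and invokes the monotonicity of $\abs{U(X,\cdot)}$ stated just before. Your write-up is a careful unpacking of exactly that observation.
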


The next lemma will be used to show that the number of ends is invariant under quasi-isometry.

\begin{lem}\label{lem:qi-graph-end}
Let $X$ and $X'$ be connected graphs, and let $B=B(x,r)$ be a ball in $X$. If $f:X \to X'$ is a quasi-isometry, then there is an $R>0$ such that the ball $B'=B(f(x),R)$ in $Y$ satisfies $\abs{U(X',B')} \ge \abs{U(X,B)}$.
\end{lem}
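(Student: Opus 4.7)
The plan is to use a quasi-inverse of $f$ to pull components back to $X$. Fix $(L,A)$ so that both $f$ and some quasi-inverse $g\colon X'\to X$ are $(L,A)$-quasi-isometric embeddings, and fix $K\ge 0$ with $d_X(g\circ f,\text{Id}_X)\le K$ and $d_{X'}(f\circ g,\text{Id}_{X'})\le K$. I would choose $R$ large in $r,L,A,K$---specifically so that $R/L - A - K > r + L + A$---and set $B' = B(f(x),R)$.

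Under this choice of $R$, whenever $w,w'$ are adjacent vertices of $X'$ both lying in $X'\setminus B'$, the points $g(w),g(w')$ sit at distance strictly greater than $r+L+A$ from $x$ and within $L+A$ of each other, so any geodesic joining them in $X$ remains in $X\setminus B$. It follows that for each $D\in U(X',B')$ the image $g(D)$ lies in a single connected component $\Phi(D)$ of $X\setminus B$, which is unbounded because $g$ is a quasi-isometric embedding. This defines a map $\Phi\colon U(X',B')\to U(X,B)$, and the lemma reduces to showing that $\Phi$ is surjective.

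For surjectivity, fix $C\in U(X,B)$ and pick $y\in C$ with $d_X(y,x)>L(R+A)+K$, which exists by the unboundedness of $C$. Then $f(y)\notin B'$, and $g(f(y))$ lies within $K$ of $y\in C$ and hence in $C$. Letting $D$ be the component of $X'\setminus B'$ containing $f(y)$ gives $\Phi(D)=C$---provided $D$ is itself unbounded.

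The main obstacle is to confirm that $D$ is unbounded rather than a bounded ``island'' of $X'\setminus B'$. I would handle this by passing to the end compactification: each $C\in U(X,B)$ contains an end $\omega_C$ of $X$ (assembled by a compactness-type argument on the decreasing sequence of unbounded components of $C\cap(X\setminus B(x,r'))$ as $r'\to\infty$), and $f$ sends $\omega_C$ to a well-defined end of $X'$ via the classical argument for quasi-isometry invariance of ends. The level-$R$ representative of this image end is by construction an unbounded component of $X'\setminus B'$, which by choosing $y$ deep in $\omega_C$ coincides with $D$; hence $\Phi(D)=C$ for a bona fide $D\in U(X',B')$. In the finite case $|U(X,B)|=n$, choosing $R$ additionally large enough to pairwise distinguish the $n$ resulting ends of $X'$ at level $R$ produces $n$ distinct unbounded components in $X'\setminus B'$; the infinite-cardinality case follows from the same argument applied to arbitrary finite subcollections together with the monotonicity of $R\mapsto |U(X',B(f(x),R))|$.
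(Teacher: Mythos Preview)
Your construction of the map $\Phi\colon U(X',B')\to U(X,B)$ is correct and essentially identical to the paper's. The divergence is entirely in the surjectivity argument.

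The paper's surjectivity proof is far more elementary than yours: it simply reruns the same component-tracking argument with the roles of $f$ and $g$ reversed. One takes a larger ball $B_1=B(x,R_1)$ in $X$ with $R_1$ large enough that $f$ sends $X\setminus B_1$ outside $B(f(x),R+L+A)$; then the identical reasoning shows $f$ maps each element of $U(X,B_1)$ into a single element of $U(X',B')$. Given $D\in U(X,B)$, pick $D'\in U(X,B_1)$ with $D'\subset D$, let $C\in U(X',B')$ be the component containing $f(D')$, and observe that $g(C)$ meets $D'\subset D$ (since $g\circ f$ is close to the identity and $D'$ is unbounded), forcing $\Phi(C)=D$. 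No end compactification enters at all.

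Your route has a genuine gap. You propose to extract an end $\omega_C$ from each $C\in U(X,B)$ via a ``compactness-type argument'' on nested unbounded components, and then push it forward via ``the classical argument for quasi-isometry invariance of ends.'' But the whole point of this lemma in the paper is to treat graphs that are \emph{not} locally finite---the relevant Cayley graphs have uncountable vertex degree---and both of those steps are normally proved under local-compactness hypotheses. In particular, without local finiteness the sequence of unbounded components of $C\cap(X\setminus B(x,r'))$ can simply become empty as $r'$ grows, so there is nothing for your compactness argument to select; and invoking the classical end-invariance result is circular, since that is essentially what is being established here. Your final paragraph compounds the problem: once $\Phi$ is surjective the cardinality inequality is immediate, so the business of ``choosing $R$ additionally large enough to pairwise distinguish the $n$ resulting ends'' is unnecessary, and the appeal to monotonicity of $R\mapsto |U(X',B(f(x),R))|$ for the infinite case again relies on a statement that can fail without local finiteness.
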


\begin{proof}
Suppose $f:X\to X'$ and $g:X'\to X$ are $(L,A)$-quasi-isometries that are quasi-inverses of each other. Let $x'=f(x)$, and by possibly increasing $A$, we may assume without loss of generality that $g(x') = x$. Put $r' = Lr+L^2 + 2LA$ and $B' = B(x',r')$. The goal is to construct a surjection $U(X',B') \to U(X,B)$. First observe that whenever $y \notin B'$,
\begin{align}\label{g(y) is far from x}
    d(x,g(y)) =
    d(gf(x),g(y)) \ge
    L^{-1}d(f(x),y)-A >
    L^{-1}r'-A=
    r+L+A.
\end{align}
That is, $g$ maps the complement of $B'$ into the complement of $B(x,r+L+A)$. Let $C\in U(X',B')$, and pick any $y \in C$. Since $y \notin B'$, we know that $g(y) \notin B$. So $g(y)$ must be contained in some connected component $D$ of $X\setminus B$. We now show that $g(C) \subset D$. Let $z \in C$ be adjacent to $y$. Then $g(z) \notin B$ by the inequality \ref{g(y) is far from x}, and
\[
    d(g(y),g(z)) \le
    Ld(y,z)+A =
    L+A.
\]
Let $\gamma$ be a shortest path in $X$ between $g(y)$ and $g(z)$. If $g(z) \notin D$, then $g(z)$ is contained in a different component of $X\setminus B$. Hence $\gamma$ must intersect $B$ at some point, call it $w$, and we get
\[
    d(x,g(y)) \le
    d(x,w) + d(w,g(y)) \le
    r+d(g(z),g(y)) \le
    r+L+A,
\]
contrary to the inequality  \ref{g(y) is far from x}. So $g(z) \in D$, and since $C$ is connected, it follows that $g(C) \subset D$. Since $C$ is unbounded and $g$ is a quasi-isometry, $D$ must be unbounded. Therefore $D \in U(X,B)$ and we put $\Phi(C) = D$. Thus we have a well-defined map $\Phi : U(X',B') \to U(X,B)$ which we will show is surjective. 

Let $D \in U(X,B)$ be arbitrary. Like before, we may take a ball $B_1=B(x,R)$ of sufficiently large radius $R$ such that whenever $y \notin B_1$, we have $d(x',f(y)) > r' + L + A$. Then by similar reasoning used before, $f$ maps each element of $U(X,B_1)$ into an element of $U(X',B')$. Since $D$ is an unbounded component of $X \setminus B$, and $B_1$ is just a bounded neighborhood of $B$, there must be a $D' \in U(X,B_1)$ with $D' \subset D$. Then like before, $f(D') \subset C$ for some $C \in U(X',B')$. Since $f$ and $g$ are quasi-inverse, and $D'$ is unbounded, there is a point in $C$ that $g$ maps into $D' \subset D$. Hence $g(C) \subset D$, and therefore $\Phi(C) = D$. Thus $\Phi$ is surjective, and $\abs{U(X',B')} \ge \abs{U(X,B)}$.
\end{proof}

\begin{prop}\label{prop:number of ends is a qi invariant}
If $X$ and $X'$ are connected graphs that are quasi-isometric, then $e(X) = e(X')$.
\end{prop}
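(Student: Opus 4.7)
The plan is to derive the proposition directly from Lemma~\ref{lem:qi-graph-end}, which essentially does all the heavy lifting. The previous unlabeled lemma tells us that $e(X)$ can be computed as $\sup_{r>0}|U(X,B(x,r))|$ for any fixed basepoint $x$, and similarly for $X'$. So it suffices to compare numbers of unbounded components of ball complements in the two graphs.

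First I would fix a quasi-isometry $f\colon X \to X'$ together with a quasi-inverse $g\colon X' \to X$, noting that both are quasi-isometries. Given any ball $B = B(x,r) \subset X$, Lemma~\ref{lem:qi-graph-end} applied to $f$ produces a ball $B' \subset X'$ with $|U(X', B')| \geq |U(X,B)|$. Taking the supremum over all balls $B$ in $X$ yields
\[
    e(X') \;=\; \sup_{B' \text{ ball in } X'} |U(X',B')| \;\geq\; \sup_{B \text{ ball in } X} |U(X,B)| \;=\; e(X).
\]
Next I would apply exactly the same argument to the quasi-isometry $g\colon X' \to X$ (with $f$ as its quasi-inverse) to obtain $e(X) \geq e(X')$. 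Combining the two inequalities gives the equality $e(X) = e(X')$.

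There is no real obstacle here; the proposition is essentially a formal consequence of Lemma~\ref{lem:qi-graph-end} together with the symmetric role played by $f$ and its quasi-inverse. The only subtle point, already handled in the statement of that lemma, is that one must allow the radius of the ball in the target graph to be larger than the radius in the source graph, which is why working with the supremum over all balls (rather than a fixed one) is essential. All other details, such as the fact that a quasi-inverse of a quasi-isometry is itself a quasi-isometry with controlled constants, are standard and need only be invoked.
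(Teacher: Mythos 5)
Your proposal is correct and follows essentially the same route as the paper: apply Lemma~\ref{lem:qi-graph-end} once to get $e(X') \ge e(X)$ and then swap the roles of $X$ and $X'$ (equivalently, apply the lemma to the quasi-inverse) for the reverse inequality. No issues.
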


\begin{proof}
By Lemma \ref{lem:qi-graph-end}, for any ball $B$ in $X$ there is a ball $B'$ in $X'$ such that $\abs{U(X',B')} \ge \abs{U(X,B)}$. It follows that $e(X') \ge e(X)$. Replacing the roles of $X$ and $X'$ yields the reverse inequality.
\end{proof}

Lastly, we establish a practical lemma.

\begin{lem}\label{lem:one-ended criterion}
    Let $X$ be a connected, unbounded, transitive graph, and fix a vertex $o$. Let $f:\bN \to \bN$ be any function with $f(n) > n$. Then $e(X)=1$ if the following holds for each integer $R>0$: Any $x_1,x_2\in X$ with $d(x_1,o)=d(x_2,o) = f(R)$ can be connected by a path which is disjoint from $B(o,R)$.
\end{lem}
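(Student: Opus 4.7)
The plan is to show that $|U(X, B(o, R))| = 1$ for every integer $R > 0$; combined with the preceding lemma this gives $e(X) = 1$. Since $X$ is connected and unbounded, every non-negative integer is realized as $d(o, v)$ for some vertex $v$, so I may fix once and for all a vertex $x_0$ with $d(o, x_0) = f(R)$.

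The central intermediate claim is that every vertex $y \in X$ with $d(o, y) \ge f(R)$ lies in the same connected component of $X \setminus B(o, R)$ as $x_0$. To prove this, I would take a geodesic in $X$ from $y$ to $o$ and truncate it at the first vertex $y'$ reaching distance exactly $f(R)$ from $o$; the subpath from $y$ to $y'$ consists of vertices at distance $\ge f(R) > R$ from $o$, hence avoids $B(o, R)$. The hypothesis, applied to $y'$ and $x_0$, then yields a path from $y'$ to $x_0$ disjoint from $B(o, R)$, and concatenation completes the claim. Calling the resulting common component $C$, I observe that $C$ contains vertices at every distance $\ge f(R)$ from $o$, so $C$ has infinite $X$-diameter and is thus unbounded.

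For uniqueness, let $C'$ be any unbounded component of $X \setminus B(o, R)$. If $C'$ contained no vertex at distance $\ge f(R)$ from $o$, then $C'$ would be contained in the annular set $\{v : R < d(o, v) < f(R)\}$ and would have $X$-diameter at most $2(f(R) - 1)$, contradicting unboundedness. So $C'$ must meet $\{v : d(o, v) \ge f(R)\}$, and by the central claim, $C' = C$. Therefore $U(X, B(o, R)) = \{C\}$, as desired.

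I expect the main ``obstacle'' to be more cosmetic than substantive: I must be consistent about \emph{unbounded} meaning infinite $X$-diameter (which agrees with infinite induced path-metric diameter in the subgraph, since the latter dominates $d_X$), and I must rule out that the annulus $\{v : R < d(o, v) < f(R)\}$ hides extra unbounded components, which is immediate from its bounded $X$-diameter. The strict inequality $f(R) > R$ is used precisely where the truncated geodesic and the hypothesized path are required to avoid $B(o, R)$. Transitivity of $X$ plays no further role here beyond its implicit use in reducing $e(X)$ to a supremum over balls centered at a single chosen basepoint $o$.
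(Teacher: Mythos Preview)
Your proof is correct and amounts to the direct version of the paper's contrapositive argument: the paper assumes $e(X) > 1$, picks vertices $x_1, x_2$ at distance exactly $f(R)$ from $o$ in two distinct unbounded components of $X \setminus B(o,R)$ (implicitly using the same geodesic-truncation idea you spell out), and observes that any path between them must meet $B(o,R)$. The underlying ingredients are identical, and your write-up has the minor virtue of making the existence of at least one unbounded component (hence $e(X)\ge 1$) explicit.
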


\begin{proof}
    We prove the contrapositive statement. If $e(X) > 1$, then there exists an integer $R>0$ such that $B = B(o,R)$ satisfies $\abs{U(X,B)} \ge 2$. Let $C_1$ and $C_2$ be different unbounded components of $X \setminus B$. Since these are unbounded, one may choose for $i=1,2$, a vertex $x_i \in C_i$ with $d(x_i,o) = f(R)$. Since $B$ separates $C_1$ and $C_2$, all paths between $x_1$ and $x_2$ must intersect $B$. 
\end{proof}

\subsection{Surfaces and the mapping class group}\label{sec:surfaces and mapping class groups}
A \emph{surface} $\Sigma$ is a connected, orientable, $2$-dimensional topological manifold without boundary. It is of \emph{finite type} if its fundamental group is finitely generated; otherwise it is of \textit{infinite type}. A \emph{subsurface} $S$ of $\Sigma$ is a closed, connected subset of $\Sigma$ which is itself a surface with boundary. Assume the boundary of a subsurface consists of a finite number of pairwise disjoint simple closed curves, none of which bound a disk or a punctured disk in $\Sigma$. 

Denote by $\Homeo^+(\Sigma)$ the group of orientation-preserving homeomorphisms of $\Sigma$, with the compact-open topology, and denote by $\Homeo_0(\Sigma)$ the connected component of the identity in $\Homeo^+(\Sigma)$. The \emph{mapping class group} of $\Sigma$ is defined as the group $\Map(\Sigma) = \Homeo^+(\Sigma)/\Homeo_0(\Sigma)$ of all isotopy classes of orientation-preserving homeomorphisms of $\Sigma$, and $\Map(\Sigma)$ is endowed with the quotient topology. When $\Sigma$ is of infinite type, $\Map(\Sigma)$ is a Polish group which is not locally compact.

\subsection{The space of ends of a surface}\label{sec:surface ends}
The \textit{space of ends} $\End(\Sigma)$ of a surface $\Sigma$ is the inverse limit of the system of components of complements of compact subsets of $\Sigma$. Intuitively, each end corresponds to a way of leaving every compact subset of $\Sigma$ (see \cite{Richards} for details). Denote by $\End^g(\Sigma) \subset \End(\Sigma)$ the subspace of ends which are accumulated by genus, and denote by $\genus(\Sigma)$ the (possibly infinite) genus of $\Sigma$. By a theorem of Richards \cite{Richards}, connected, orientable surfaces $\Sigma$ are classified up to homeomorphism by the triple $(\genus(\Sigma),\End(\Sigma),\End^g(\Sigma))$. For a subsurface $S\subset\Sigma$, the space of ends of $S$ is defined similarly and is denoted by $\End(S)$. The embedding of $S$ in $\Sigma$ gives a natural embedding of $\End(S)$ into $\End(\Sigma)$. 

%Every subsurface $S\subseteq\Sigma$ determines a finite partition $\Pi_S$ of the ends of $\End(\Sigma) \setminus \End(S)$, where each element of the partition is the space of ends of a connected component of $\Sigma \setminus S$. Given two subsets $X,Y\subseteq \End(\Sigma)$, we say that $S$ \emph{separates} $X$ and $Y$ if $X$ and $Y$ belong to distinct elements of the partition $\Pi_S$. 

Assume that $\Sigma$ is a surface of infinite type. One of the key tools used in \cite{MannRafi} was a partial pre-order $\preceq$ on $\End(\Sigma)$. By definition, $y \preceq x$ if, for every clopen neighborhood $U$ of $x$, there exists a clopen neighborhood $V$ of $y$ which is homeomorphic to a subset of $U$. This induces an equivalence relation on $\End(\Sigma)$, where $x$ and $y$ are \textit{equivalent} ends if $x \preceq y$ and $y \preceq x$, and we denote by $E(x)$ the equivalence class of $x$. Denote by $\cM(\End(\Sigma))$ the set of ends of $\Sigma$ which are maximal with respect to $\preceq$, that is, the set of ends $x \in \End(\Sigma)$ such that $x \preceq y$ implies $y \in E(x)$. The elements of $\cM(\End(\Sigma))$ are called \emph{maximal ends}.

For an end $x\in \End(\Sigma)$, a clopen neighborhood $U$ of $x$ is \emph{stable} if for any smaller neighborhood $U'\subset U$ of $x$, there is a homeomorphic copy of $U$ contained in $U'$. (see \cite[Proposition 3.2]{BDR} for equivalent definitions of a stable neighborhood). The surface $\Sigma$ is \textit{stable} if each of its ends has a stable neighborhood.

\subsection{Geometry of big mapping class groups} \label{sec:CB}

Let $G$ be a Polish group. A subset $ A \subset G$ is \textit{coarsely bounded}, abbreviated CB, if it has finite diameter with respect to every compatible left-invariant metric on G. A group is \textit{locally CB} if it has a CB neighborhood of the identity, and it is \textit{CB generated} if it has a CB generating set. It is a fact that a CB generated group is necessarily locally CB \cite[Theorem 1.2]{Rosendal}. From the point of view of large-scale geometry, CB sets in Polish groups are analogous to finite sets in discrete groups. In particular, a CB generated group admits a metric that is well-defined up to quasi-isometry.

\begin{thm}\label{thm:qi type is well-defined}\cite{Rosendal}. Let G be a CB generated Polish group. Then the identity map on $G$ is a quasi-isometry between the word metrics associated to any two symmetric, CB generating sets. 
\end{thm}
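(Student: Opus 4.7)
The plan is to reduce the theorem to a single absorption statement: if $\Sigma_1$ and $\Sigma_2$ are two symmetric CB generating sets for $G$, then there exists an integer $n \ge 1$ with $\Sigma_1 \subseteq \Sigma_2^n$ and $\Sigma_2 \subseteq \Sigma_1^n$. Granted this, any $g \in G$ expressible as a product of $k$ elements of $\Sigma_1$ is also expressible as a product of at most $nk$ elements of $\Sigma_2$, so $d_{\Sigma_2}(1,g) \le n \cdot d_{\Sigma_1}(1,g)$; the reverse inequality is symmetric, and left-invariance of the word metrics upgrades this to a bi-Lipschitz (hence quasi-isometric) equivalence of the identity map.

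To establish the absorption statement, I would prove the following lemma: for any symmetric CB generating set $\Sigma$ and any CB subset $A \subseteq G$, there exists $N \ge 1$ with $A \subseteq \Sigma^N$. Applying this with $(\Sigma, A) = (\Sigma_2, \Sigma_1)$ and then with the roles of the two generating sets swapped yields both desired inclusions. The proof of the lemma would proceed in two steps. First, since $\Sigma$ generates $G$ we have $G = \bigcup_n \Sigma^n$, and because $G$ is Polish (hence Baire) some $\Sigma^n$ is non-meagre, so $\Sigma^{2n} \supseteq \Sigma^n(\Sigma^n)^{-1}$ contains a symmetric open identity neighborhood $V$. Second, I would invoke Rosendal's intrinsic characterization of CB sets: $A$ is CB if and only if, for every open identity neighborhood $U$, there exist a finite set $F \subseteq G$ and $k \ge 1$ with $A \subseteq (FU)^k$. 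Applying this with $U = V \subseteq \Sigma^{2n}$ and observing that the finitely many elements of $F$ each lie in some power of $\Sigma$ (because $\Sigma$ generates), we obtain $A \subseteq \Sigma^N$ for some uniform $N$.

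The main obstacle is exactly the interface with Rosendal's intrinsic definition of a CB set. Such a set is required to have finite diameter with respect to \emph{every} compatible left-invariant metric on $G$, but word metrics associated to CB generating sets need not be continuous when $G$ fails to be locally compact, so one cannot naively conclude that $\Sigma_1$ has finite $d_{\Sigma_2}$-diameter directly from the definition. Bridging this gap is the content of the absorption lemma above, and it is where all the real work sits; once that lemma is in hand, the remaining bookkeeping with word lengths is routine.
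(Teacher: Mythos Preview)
The paper does not supply its own proof of this statement; the theorem is quoted from Rosendal with a bare citation and then used as a black box. So there is nothing in the paper to compare your argument against. That said, your sketch faithfully reproduces the skeleton of Rosendal's proof: reduce to the absorption lemma $A\subseteq\Sigma^{N}$, manufacture an open identity neighbourhood inside some power of $\Sigma$, and feed that neighbourhood into the combinatorial $(FV)^{k}$ characterisation of coarse boundedness.

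One technical point deserves flagging. The step ``some $\Sigma^{n}$ is non-meagre, so $\Sigma^{2n}\supseteq\Sigma^{n}(\Sigma^{n})^{-1}$ contains an open identity neighbourhood'' is Pettis' lemma, and Pettis requires the non-meagre set to have the Baire property. For an \emph{arbitrary} symmetric CB set $\Sigma$ this is not automatic; under the axiom of choice there are non-meagre sets $A$ in Polish groups with $AA^{-1}$ not a neighbourhood of the identity. You correctly sensed that the passage from the metric definition of CB to a statement about word length is where the real difficulty lies, but your proposed bridge still leans on an unjustified regularity hypothesis. In Rosendal's treatment this is handled by using that closures of CB sets remain CB (so one may work with $\overline{\Sigma}$, which is closed and hence has the Baire property) together with the already-established local boundedness of $G$; in the applications of this paper the generating sets that arise are Borel anyway, so the issue is invisible in practice.
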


Thus, we can reasonably define the number of ends of CB generated Polish groups, and hence, CB generated big mapping class groups.

\begin{defn}
    Let $G$ be a CB generated Polish group, and fix a symmetric CB generating set $S$. The \textit{number of ends} of $G$ is the number of ends of its Cayley graph with respect to $S$.
\end{defn}

By Theorem \ref{thm:qi type is well-defined}, any other choice of $S$ yields a quasi-isometric Cayley graph which has the same number of ends by Proposition \ref{prop:number of ends is a qi invariant}.

\subsection{Avenue surfaces and translatable surfaces}\label{sec:avenue surfaces}

Next we recall the definition of an avenue surface and a translatable surface. These notions were introduced in \cite{HQR} and \cite{Schaffer-Cohen}, respectively, and we explain their precise relationship.

\begin{defn}
An avenue surface is a surface $\Sigma$ which does not contain any non-displaceable subsurface of finite type, whose end space is tame, and whose mapping class group $\Map(\Sigma)$ is CB-generated but not CB.
\end{defn}

The condition of tameness only requires that certain ends of $\Sigma$ admit a stable neighborhood. Stable surfaces, in which every end admits a stable neighborhood, automatically have tame end space. Since we will eventually only consider stable surfaces, we refer the interested reader to \cite{MannRafi} for a precise definition of tame and omit writing it here.

\begin{comment}
let $\Sigma$ be any avenue surface $\Sigma$, there is necessarily a mapping class $h$ that acts on the surface by \emph{translation}. That is, there exists a subsurface $S$ with two boundary components such that

\begin{itemize}
    \item $\{h^n \cdot S\}$ covers $\Sigma$,
    \item the cyclic subgroup $\langle h \rangle$ is coarsely embedded in $\Map(\Sigma)$.
    \item $\Sigma$ is a union of $\{h^n \cdot S\}$ where two copies of $S$ are either disjoint or identified along exactly one boundary component. Following the notation in \cite{Schaffer-Cohen}, we write $\Sigma \cong S^{\natural \bZ}$.
\end{itemize}
\end{comment}

\begin{defn}
    Given a surface $\Sigma$, an end $e$ of $\Sigma$, and a sequence $\alpha_n$ of simple closed curves on $\Sigma$, we write $\lim_{n\to\infty} \alpha_n = e$ if for each neighborhood $V$ of $e$, all but finitely many $\alpha_n$ are contained in $V$ (after isotopy). A homeomorphism $h$ of $\Sigma$ is a \textit{translation} if there are two distinct ends $e_+$ and $e_-$ of $\Sigma$ such that for any simple closed curve $\alpha$ on $\Sigma$, $\lim_{n\to\infty} h^n(\alpha) = e_+$ and $\lim_{n\to\infty} h^{-n}(\alpha) = e_-$. A surface $\Sigma$ is \textit{translatable} if it admits a translation.
\end{defn}

By \cite[Theorem 5.3]{Schaffer-Cohen}, if a surface $\Sigma$ has tame end space, and $\Map(\Sigma)$ is CB generated and not CB, then $\Sigma$ is an avenue surface if and only if it is translatable. In particular, every stable avenue surface is translatable.

In a translatable surface $\Sigma$, a simple closed curve $\alpha$ is \textit{separating} if $\Sigma \setminus \alpha$ is disconnected and $e_+$ and $e_-$ are ends of different components. (Note that this condition is stronger than the usual notion of separating which only requires $\Sigma \setminus \alpha$ to be disconnected.) If $\alpha$ is a separating curve, then the component of $\Sigma \setminus \alpha$ with $e_+$ (resp. $e_-$) as one of its ends is called the \textit{right} (resp. \textit{left}) \textit{side} of $\alpha$, and it is denoted by $\alpha_+$ (resp. $\alpha_-$). If $\beta$ is a separating curve that lies on the right side of $\alpha$, denote by $[\alpha,\beta]$ the subsurface $(\alpha_+ \cap \beta_-) \cup \alpha \cup \beta$ of $\Sigma$ with boundary components $\alpha$ and $\beta$. If $\beta$ lies on the left side of $\alpha$ instead, then by an abuse of notation, still use $[\alpha,\beta]$ to denote $[\beta,\alpha]$.

Next we recall a characterization of translatable surfaces from \cite{Schaffer-Cohen}. Let $S$ be any surface with exactly two boundary components, and denote by $S^{\natural \bZ}$ the surface obtained by arranging countably many copies of $S$ like $\bZ$ and then gluing consecutive copies of $S$ together along their boundary components in the natural way. By construction, this surface is translatable. Conversely, every translatable surface admits such a decomposition.

\begin{prop}\cite[Proposition 3.5]{Schaffer-Cohen}
If $\Sigma$ is a translatable surface with translation $h$, and $\alpha$ is any separating curve, then there is a subsurface $S = [\alpha,h^n(\alpha)]$ for some $n$ such that $\Sigma$ is homeomorphic to $S^{\natural \bZ}$.
\end{prop}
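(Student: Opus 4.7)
The plan is to use the translation $h$ to generate a $\bZ$-indexed family of pairwise disjoint separating curves from $\alpha$, and then to show that the subsurfaces between consecutive curves tile $\Sigma$. The desired homeomorphism $\Sigma \cong S^{\natural \bZ}$ is then induced by the obvious gluing of copies of $S$.

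First I would observe that $h$ fixes each of $e_\pm$: applying $h$ to $\lim_{k\to\infty} h^k(\alpha) = e_+$ gives $h(e_+) = e_+$, and symmetrically $h(e_-) = e_-$. Consequently $h$ sends separating curves to separating curves and preserves the ``right'' and ``left'' labels. Using $\lim_{k\to\infty} h^k(\alpha) = e_+$, I would choose $n$ large enough that $h^n(\alpha)$ has a representative contained in an end neighborhood of $e_+$ inside $\alpha_+$, so that $h^n(\alpha)$ is disjoint from $\alpha$ and lies on its right side. Iterating $h^{kn}$ then shows that the curves $\{h^{kn}(\alpha)\}_{k\in\bZ}$ are pairwise disjoint and linearly ordered, with $h^{(k+1)n}(\alpha)$ on the right of $h^{kn}(\alpha)$. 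Set $S := [\alpha, h^n(\alpha)]$ and $S_k := h^{kn}(S) = [h^{kn}(\alpha), h^{(k+1)n}(\alpha)]$; consecutive $S_k$ meet only along their shared boundary curve.

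Next I would show $W := \bigcup_{k\in\bZ} S_k = \Sigma$ by verifying that $W$ is open, closed, and nonempty in the connected space $\Sigma$. Openness is local: each point in the interior of some $S_k$ is interior to $W$, and each point on a boundary curve $h^{kn}(\alpha)$ has a neighborhood covered by $S_{k-1} \cup S_k$. For closedness I would establish that $\{S_k\}$ is locally finite. Given a compact $K \subset \Sigma$ containing $\alpha$, for $|k|$ large both boundary curves of $S_k$ are disjoint from $K$ (they lie in an end neighborhood of $e_+$ or $e_-$ avoiding $K$). The separating property of each boundary curve, combined with the linear ordering, forces $K$ (which is connected and contains $\alpha$) to lie on the side of some boundary curve of $S_k$ opposite the interior of $S_k$, giving $K \cap S_k = \emptyset$. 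Since a locally finite union of closed sets is closed, $W$ is closed, and connectedness of $\Sigma$ yields $W = \Sigma$.

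Finally, define $\phi : S^{\natural \bZ} \to \Sigma$ by sending the $k$-th copy of $S$ to $S_k$ via the restriction of $h^{kn}$. Since $h^{kn}$ sends the right-hand boundary $h^n(\alpha)$ of $S$ to $h^{(k+1)n}(\alpha)$, which is the left-hand boundary of $S_{k+1}$, the gluings match and $\phi$ is a well-defined continuous bijection that restricts to a homeomorphism on each piece, hence a homeomorphism. I expect the main obstacle to be the local finiteness step above: a priori $S_k$ could ``wrap back'' through a fixed compact set even after both of its boundary curves have escaped toward $e_\pm$, and ruling this out relies essentially on each $h^{kn}(\alpha)$ separating $\Sigma$ together with the $h$-invariance of the left/right labeling.
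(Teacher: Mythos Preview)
The paper does not prove this proposition; it is stated with a citation to \cite[Proposition 3.5]{Schaffer-Cohen} and no proof is given. So there is nothing in this paper to compare your argument against.

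That said, your argument is essentially correct and is the standard one. Two small points worth tightening. First, when you invoke local finiteness you write ``$K$ (which is connected and contains $\alpha$)'', but you introduced $K$ merely as compact; you should either take a compact exhaustion of $\Sigma$ by connected sets from the outset, or replace $K$ by the connected component containing $\alpha$ before running the separation argument. Second, in the final gluing step, ``the gluings match'' requires that the identification of $\partial_R S$ with $\partial_L S$ in $S^{\natural\bZ}$ be taken to be $h^{-n}|_{h^n(\alpha)}$; with that convention, the restrictions $h^{kn}|_S$ assemble to a well-defined continuous bijection, and the homeomorphism conclusion follows. Neither issue is serious, and your identification of the local finiteness step as the crux is on target.
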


\subsection{The translatable curve graph}\label{sec:translatable curve graph}

We now recall the definition of the translatable curve graph from \cite{Schaffer-Cohen}. Let $\Sigma = S^{\natural \bZ}$ be a translatable surface and assume that $\Sigma$ is tame. By \cite[Lemma 4.4]{Schaffer-Cohen} and \cite[Proposition 4.7]{MannRafi}, $\End(S)$ has a finite, positive number of equivalence classes of maximal ends, and each such equivalence class intersects $\End(S)$ at either a finite set or a Cantor set. So we may fix, once and for all, a set $\cR = \s{f_1,\dots,f_N,c_1,\dots,c_M}$ of representatives of the equivalence classes of maximal ends of $S$ such that each $E(f_i) \cap \End(S)$ is finite and each $E(c_i) \cap \End(S)$ is a Cantor set. Then in the whole end space $\End(\Sigma)$, since $\Sigma = S^{\natural \bZ}$, each $E(f_i)$ is homeomorphic to $\bZ$ and each $E(c_i)$ is homeomorphic to a Cantor set with two points removed. We say an end of $\Sigma$ is of \textit{discrete type} if it is equivalent to some $f_i$. Observe that $\Sigma$ has at least one end of discrete type if and only if $N > 0$.

Pick mutually disjoint stable neighborhoods $V_{f_i}$ and $V_{c_i}$ of the elements of $\s{f_1,\dots,f_N,c_1,\dots,c_M}$. The definition of stable implies that each $V_{f_i}$ contains exactly one discrete-type end. %By \cite[Lemma 4.5]{Schaffer-Cohen}, the $V_{f_i}$ and $V_{c_i}$ may be chosen so that $\End(S)$ is equal to the disjoint union $\bigsqcup_{i=1}^k V_{f_i} \sqcup \bigsqcup_{i=1}^{k'} V_{c_i}$.
For each $i=1,\dots,N$, let $T_i$ be a subsurface of $S$ with two boundary circles and end space homeomorphic to $V_{f_i} \sqcup \bigsqcup_{j=1}^{M} V_{c_j}$. If $f_i$ or any of the $c_j$ is accumulated by genus, then $T_i$ necessarily has infinite genus; otherwise, we require $T_i$ to have genus 0. If $S$ has finite positive genus, then let $T_{N+1}$ be a subsurface with two boundary circles, genus 1, and end space homeomorphic to $\bigsqcup_{j=1}^{M} V_{c_j}$. Put $\cS = \s{T_1,\dots,T_N,T_{N+1}}$ (including $T_{N+1}$ only if it is defined, of course). The \textit{translatable curve graph} $\cT\cC(\Sigma)$ of $\Sigma$ is the graph whose vertices are the isotopy classes of separating curves on $\Sigma$, with an edge between two curves $\alpha$ and $\beta$ if they have disjoint representatives and $[\alpha,\beta]$ is homeomorphic to a subsurface in $\cS$. By \cite[Lemma 4.7]{Schaffer-Cohen}, $\cT\cC(\Sigma)$ is connected, and we denote its path metric by $d$. Observe that $\Map(\Sigma)$ naturally acts by isometries on $\cT\cC(\Sigma)$. Furthermore, this action is transitive \cite[Lemma 3.6]{Schaffer-Cohen}.

There is an edge case: if $N=0$ and the genus of $S$ is either $0$ or infinite, then the above construction gives $\cS = \emptyset$. This is not desirable, so in this case we put $\cS = \{S\}$. In some sense this choice is consistent with the construction above because $S$ is indeed a subsurface with two boundary circles and end space homeomorphic to $\bigsqcup V_{c_i}$. More importantly, in this case $\cT\cC(\Sigma)$ has diameter 2 and $\Map(\Sigma)$ is CB, which means that both are quasi-isometric to a point (see \cite{Schaffer-Cohen} for details). If $\Sigma$ is a stable avenue surface, then $\Map(\Sigma)$ is not CB by assumption, and so $N=0$ implies that $S$ has finite positive genus.

Since every stable avenue surface $\Sigma$ is a translatable surface with tame end space, the following result allows us to use the translatable curve graph to understand the large-scale geometry of $\Map(\Sigma)$.

\begin{thm}\cite[Theorem 4.9]{Schaffer-Cohen}\label{thm:mapping class group is quasi-isometric to its translatable curve graph}
If $\Sigma$ is a stable avenue surface, then $\Map(\Sigma)$ is quasi-isometric to $\cT\cC(\Sigma)$.
\end{thm}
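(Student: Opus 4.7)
The plan is to apply Rosendal's generalization of the Milnor--\v{S}varc lemma to Polish groups \cite{Rosendal}: if a Polish group $G$ acts continuously and by isometries on a connected graph $X$ with (i) the action cobounded, and (ii) the set $\{g \in G : d(g x_0, x_0) \le R\}$ coarsely bounded in $G$ for every $R > 0$ and some base vertex $x_0$, then the orbit map $g \mapsto g x_0$ is a quasi-isometry between $G$ and $X$.

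The setup is immediate: $\Map(\Sigma)$ acts continuously and by isometries on $\cT\cC(\Sigma)$ by construction, and coboundedness follows from the transitivity noted just above the statement. So the entire burden is condition (ii). Fix a base separating curve $\alpha$. Since the action is transitive, $\{g : d(g\alpha,\alpha) \le R\}$ decomposes as a union, over $\beta \in B(\alpha,R)$, of cosets of $\mathrm{Stab}(\alpha)$. Condition (ii) then reduces to two subclaims: (a) for each $R$, the ball $B(\alpha,R)$ is covered by finitely many $\mathrm{Stab}(\alpha)$-orbits; and (b) $\mathrm{Stab}(\alpha)$ is CB in $\Map(\Sigma)$.

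Claim (a) I would establish combinatorially. Adjacency in $\cT\cC(\Sigma)$ only permits the finite family $\cS$ of topological types for the subsurface between neighboring curves. To any shortest path $\alpha = \gamma_0, \gamma_1, \ldots, \gamma_k = \beta$ of length $k \le R$, attach the ordered sequence of topological types of the intermediate regions $[\gamma_{i-1},\gamma_i]$; there are only finitely many such sequences. An induction on path length using the change-of-coordinates principle then shows that two curves with the same combinatorial type lie in a common $\mathrm{Stab}(\alpha)$-orbit.

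Claim (b) is the main obstacle. The stabilizer decomposes, modulo the Dehn twist subgroup $\langle T_\alpha \rangle$, as a subgroup of $\Map(\alpha_+) \times \Map(\alpha_-)$. Because $\Sigma$ is an avenue surface, each side $\alpha_\pm$ contains no non-displaceable finite-type subsurface and has a unique equivalence class of maximal ends, namely $\{e_\pm\}$; these conditions together with tameness place $\alpha_\pm$ within the scope of the Mann--Rafi criteria \cite{MannRafi}, forcing $\Map(\alpha_\pm)$ to be CB. A separate argument handles the Dehn twist, and a standard extension principle then promotes $\mathrm{Stab}(\alpha)$ to a CB subgroup. Combining (a) and (b) with Rosendal's lemma yields the desired quasi-isometry.
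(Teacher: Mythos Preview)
The paper does not prove this theorem; it is quoted from \cite{Schaffer-Cohen} without proof and used only as input to the main result. So there is no argument in the paper to compare yours against.

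That said, your outline is exactly the strategy one expects (and, as far as the citation indicates, the one Schaffer-Cohen uses): apply Rosendal's Milnor--\v{S}varc lemma, reduce to showing that balls in $\cT\cC(\Sigma)$ meet only finitely many $\mathrm{Stab}(\alpha)$-orbits and that $\mathrm{Stab}(\alpha)$ is CB, and verify the latter via the Mann--Rafi criteria applied to the two halves $\alpha_\pm$. The places where your sketch is thinnest are also the genuinely delicate points. In (a), a curve $\beta$ with $d(\alpha,\beta)\le R$ need not be disjoint from $\alpha$, so ``the topological type of $[\alpha,\beta]$'' is not directly available as an invariant; the bookkeeping really must go through paths, and one has to argue that the change-of-coordinates homeomorphisms at each step can be chosen compatibly with the earlier ones. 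In (b), ``a separate argument handles the Dehn twist'' hides the fact that $\langle T_\alpha\rangle$ is infinite, so its CB-ness in $\Map(\Sigma)$ is not automatic; it follows because $T_\alpha$ lies in the subgroup of classes supported on (say) $\overline{\alpha_+}$, which is CB in $\Map(\Sigma)$ by the self-similarity of that side.
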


\section{Example: bi-infinite flute}\label{sec3}

In this section we illustrate the strategy of the proof of the main theorem with an example that captures many of the key ideas. Let $\Sigma$ be the \emph{bi-infinite flute}, i.e. the genus 0 surface with two maximal ends, $e_+$ and $e_-$, such that the other ends, represented as punctures, form a single equivalence class which accumulates to both $e_+$ and $e_-$. Equivalently, $\Sigma = S^{\natural \bZ}$, where $S$ is the annulus with one puncture. In this case, the translatable curve graph $\mathcal{TC}(\Sigma)$ is the graph whose vertices are (isotopy classes of) separating curves, with an edge between two curves $\alpha$ and $\beta$ if they have disjoint representatives and $[\alpha,\beta]$ is homeomorphic to an annulus with one puncture. From now on, denote the translatable curve graph simply by $\Gamma$. We first introduce two operations that induce maps on separating curves in $\Sigma$, or equivalently, vertices in $\Gamma$. These operations will be referred to as ``forgetting a puncture" and ``lassoing a puncture".

\subsection{Forgetting a puncture}

Identify $\Sigma$ with the space $X$ which is defined to be the strip $\bR \times [0,1]$ modulo the equivalence relation $(x,0) \sim (x,1)$, with marked points $(n,0)$ for each $n\in\bZ$. Suppose we erase the marking at a point $(n,0)$ and call the new space $X'$. As marked spaces, $X'$ is isomorphic to $X$, and one isomorphism is given by the homeomorphism $X' \to X$ which is the identity map on $(-\infty,n-1] \times [0,1]$, a horizontal contraction on $[n-1,n+1]\times[0,1]$, and the unit left shift on $[n+1,\infty) \times [0,1]$. Call this homeomorphism $\phi_n$ and denote by $\iota : X \to X'$ the map which forgets the marking at $(n,0)$. Then $\Phi_n = \phi_n \circ \iota : X \to X$ is a homeomorphism and it maps separating curves to separating curves. One can similarly define a homeomorphism $\psi_n : X' \to X$ using the identity map on $[n+1,\infty)\times [0,1]$ and the unit right shift on $(-\infty,n-1]\times[0,1]$. We freely allow $\Phi_n$ to mean either $\phi_n \circ \iota$ or $\psi_n \circ \iota$ depending on which is more appropriate for a given situation. 

\begin{defn}
    The operation of \textit{forgetting the puncture} at $(n,0)$ is the map $V(\Gamma) \to V(\Gamma)$ defined by sending each separating curve in $\Sigma \cong X$ to its image under $\Phi_n$. 
\end{defn}

In the next lemma, we observe that forgetting a puncture maps each pair of adjacent vertices in $\Gamma$ either to a pair of adjacent vertices or to the same vertex.

\begin{lem}\label{lem:forgetting a puncture preserves or collapses edges}
If $\alpha$ and $\beta$ are neighbors in $\Gamma$ and $\alpha'$ and $\beta'$ are their images after forgetting a puncture, then either $\alpha'=\beta'$ or $\alpha'$ and $\beta'$ are neighbors.
\end{lem}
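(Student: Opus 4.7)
My plan is to split into two cases according to whether the forgotten puncture lies inside or outside the subsurface $[\alpha,\beta]$. Since $\alpha$ and $\beta$ are neighbors in $\Gamma$, by definition $[\alpha,\beta]$ is homeomorphic to an annulus with exactly one puncture; in the model $X = \bR \times [0,1]/\!\sim$, this puncture corresponds to a single marked point, call it $(m,0)$ for some $m \in \bZ$. Since the curves $\alpha,\beta$ avoid the punctures, $(n,0)$ either lies in the interior of $[\alpha,\beta]$ or in the complementary subsurface, giving a clean dichotomy.

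In the first case, $n = m$, so the forgotten puncture is the unique puncture of $[\alpha,\beta]$. Under $\iota$, the subsurface $[\alpha,\beta]$ viewed inside $X'$ together with the newly-filled point $(n,0)$ becomes a genuine unpunctured annulus. Transporting via the homeomorphism $\phi_n\colon X'\to X$, the image $\Phi_n([\alpha,\beta])$ together with $\phi_n(n,0)$ is an unpunctured annulus in $X$ with boundary $\alpha'\cup\beta'$. Since two disjoint simple closed curves cobounding an annulus in any surface are isotopic, $\alpha'=\beta'$ as vertices of $\Gamma$.

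In the second case, $n\neq m$. After erasing the marking at $(n,0)$, the subsurface $\iota([\alpha,\beta])\subset X'$ is unchanged as a point set and still contains exactly one puncture, namely $(m,0)$. Applying $\phi_n$ transports it to a subsurface of $X$ with exactly one puncture, bounded by $\alpha'\cup\beta'$. Since $\Phi_n$ is injective on the underlying point set (being the composition of an inclusion of surfaces with a homeomorphism), disjointness of $\alpha$ and $\beta$ is preserved by $\Phi_n$, so $\alpha'$ and $\beta'$ have disjoint representatives. Thus $[\alpha',\beta']=\Phi_n([\alpha,\beta])$ is a punctured annulus, and by the definition of $\Gamma$ the curves $\alpha'$ and $\beta'$ are adjacent.

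The only subtlety I anticipate is verifying that $\phi_n$ identifies the two ends $e_\pm$ of $X'$ with the corresponding ends $e_\pm$ of $X$, so that the image of $[\alpha,\beta]$ under $\Phi_n$ is genuinely the subsurface $[\alpha',\beta']$ in the sense of the ordering induced by the maximal ends. This is immediate from the construction of $\phi_n$: being the identity on the left half and a unit leftward shift on the right half, it preserves the direction toward $e_+$. Overall the argument is a matter of topological bookkeeping, and I do not foresee any serious obstruction.
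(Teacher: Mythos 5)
Your proposal is correct and follows essentially the same two-case argument as the paper: if the forgotten puncture is the unique puncture of $[\alpha,\beta]$, the image subsurface becomes an unpunctured annulus and $\alpha'=\beta'$; otherwise it remains a once-punctured annulus and $\alpha'$, $\beta'$ stay adjacent. The extra bookkeeping about $\iota$, $\phi_n$, and the preservation of the ends is a harmless elaboration of what the paper leaves implicit.
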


\begin{proof}
Assume that $[\alpha,\beta]$ is an annulus with puncture $p$. If $p$ is the forgotten puncture, then $\alpha'$ and $\beta'$ are isotopic and therefore equal as vertices in $\Gamma$. Otherwise, $[\alpha',\beta']$ is still an annulus with puncture $p$, and therefore $\alpha'$ and $\beta'$ are neighbors in $\Gamma$.
\end{proof}

This implies that forgetting a puncture maps paths to (possibly shorter) paths.

\begin{lem}\label{forgetting a puncture shortens paths}
Forgetting a puncture sends the vertices on a given path to the vertices on a path which is not longer than the original path.
\end{lem}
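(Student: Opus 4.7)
The plan is to apply Lemma \ref{lem:forgetting a puncture preserves or collapses edges} edge-by-edge along the given path, and then clean up the image sequence by deleting consecutive duplicates.

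Given a path $(v_0, v_1, \ldots, v_n)$ in $\Gamma$, let $v_i' = \Phi_n(v_i)$ be the image of each vertex under the forgetting map, producing a finite sequence $(v_0', v_1', \ldots, v_n')$. For each $i = 0, 1, \ldots, n-1$, the pair $\{v_i, v_{i+1}\}$ is an edge of $\Gamma$, so by Lemma \ref{lem:forgetting a puncture preserves or collapses edges}, either $v_i' = v_{i+1}'$ or $v_i'$ and $v_{i+1}'$ are neighbors in $\Gamma$. Thus the image sequence satisfies the property that every consecutive pair is either equal or joined by an edge; it fails to be a path only insofar as repeated consecutive entries may occur.

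Next, pass to the subsequence $(w_0, w_1, \ldots, w_m)$ obtained from $(v_0', \ldots, v_n')$ by collapsing each maximal run of equal consecutive entries to a single entry. Then $w_0 = v_0'$, $w_m = v_n'$, and by construction the consecutive vertices $w_j, w_{j+1}$ are distinct and hence adjacent in $\Gamma$. So $(w_0, \ldots, w_m)$ is an honest path in $\Gamma$ joining the images of the endpoints. Since collapsing can only remove entries, we have $m \le n$, meaning the resulting path has length at most that of the original. This completes the proof.

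The argument is essentially a bookkeeping step following Lemma \ref{lem:forgetting a puncture preserves or collapses edges}; the only minor subtlety is being precise about what ``sends the vertices on a given path to the vertices on a path'' means, namely that after removing repetitions the image sequence is a path. No obstacle is expected.
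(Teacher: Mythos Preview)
Your proof is correct and follows essentially the same approach as the paper: both apply Lemma~\ref{lem:forgetting a puncture preserves or collapses edges} to each edge of the path and then remove consecutive repeated vertices. The only cosmetic difference is that the paper organizes this as an induction on the path length, whereas you do it directly in one pass; the content is identical. (One trivial remark: you reuse $n$ both for the path length and for the index in $\Phi_n$, which is a notational clash but not a mathematical issue.)
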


\begin{proof}
    Proceed by induction on the length $n$ of the given path. The case $n=1$ follows from Lemma \ref{lem:forgetting a puncture preserves or collapses edges}. So assume the statement holds when $n=k-1$, and consider a path $(\alpha_0,\alpha_1,\dots,\alpha_k)$ of length $k$. For each $i=0,\dots,k$, denote by $\alpha_i'$ the image of $\alpha_i$ after forgetting a puncture. Since $(\alpha_1,\dots,\alpha_k)$ is a path of length $k-1$, the induction hypothesis implies that $(\alpha_1',\dots,\alpha_k')$ (after removing consecutive repeated vertices) is a path of length at most $k-1$. Moreover, Lemma \ref{lem:forgetting a puncture preserves or collapses edges} implies that either $\alpha_0' = \alpha_1'$ or $\alpha_0'$ and $\alpha_1'$ are neighbors. Hence, $(\alpha_0',\alpha_1',\dots,\alpha_k')$ (after removing consecutive repeated vertices) is a path of length at most $k$. 
\end{proof}

\subsection{Lassoing a puncture}

Let $\alpha$ be a separating curve, and let $\ell$ be an arc in $\Sigma$ which intersects $\alpha$ and has only one endpoint equal to a puncture, where the puncture is viewed as a marked point. Denote by $\ell'$ the subarc of $\ell$ obtained by traveling along $\ell$ from the marked endpoint until the first intersection point with $\alpha$. In other words, $\ell'$ is the component of $\ell \setminus \alpha$ containing the marked endpoint.

\begin{defn}
    Given $\alpha$ and $\ell$ as above, the \textit{lasso curve} defined by $\alpha$ and $\ell$, denoted by $\alpha(\ell)$, is the simple closed curve obtained by tracing along $\alpha$ and $\ell'$.  In this case, the arc $\ell$ is called a \textit{lasso}. See Figure \ref{fig:lasso}.
\end{defn}

\begin{figure}
    \centering
    \includegraphics[width=.8\linewidth]{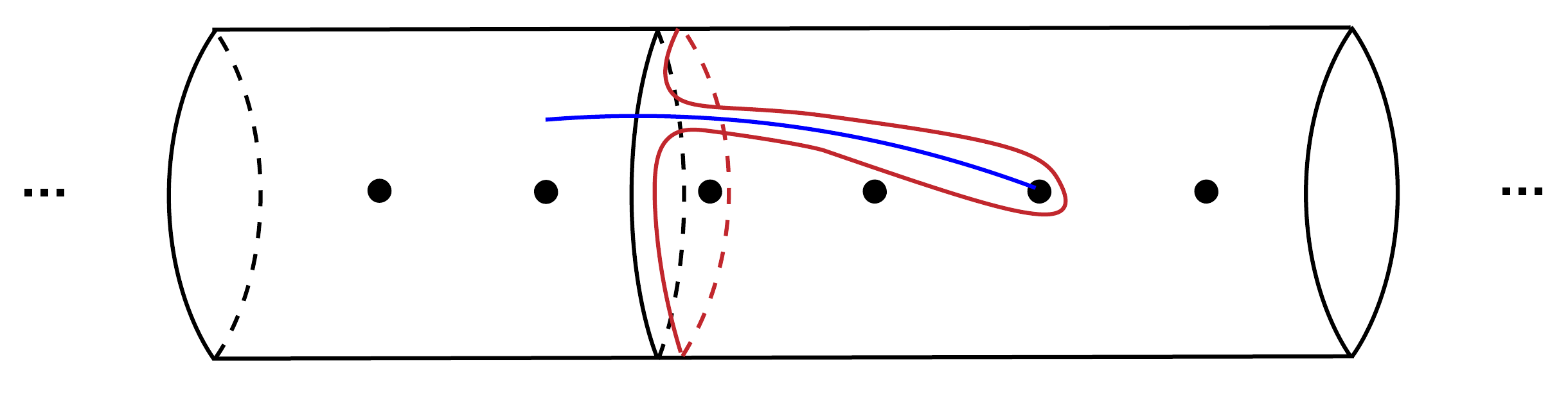}
    \caption{Lassoing a puncture: $\alpha$ is the black curve in the middle, $\ell$ is the blue arc, $\alpha(\ell)$ is the new red curve. }
    \label{fig:lasso}
\end{figure}

To give a more precise definition, consider a small tubular neighborhood of $\alpha \cup \ell'$. This subsurface has two boundary components, one of which is isotopic to $\alpha$. Then $\alpha(\ell)$ is equal to the other boundary component. The terminology is intended to evoke the image of the curve $\alpha$ using the arc $\ell$ as a lasso to wrangle the puncture $p$.

By construction, $\alpha(\ell)$ is a separating curve, which is adjacent to $\alpha$ in $\Gamma$. In fact, every neighbor of $\alpha$ can be realized as a lasso curve. Indeed, if $\alpha$ and $\beta$ are neighbors, then $[\alpha,\beta]$ is an annulus with one puncture $p$. So let $\ell$ be an arc in $[\alpha,\beta]$ from $\alpha$ to $p$, and note that $\alpha(\ell)$ is isotopic to $\beta$ because $[\alpha(\ell),\beta]$ is an annulus.

\begin{defn}
    Given a set of curves $\alpha_i$ in $\Sigma$, a connected subsurface $K$ of $\Sigma$ is a \textit{carrier} of the $\alpha_i$ if $K$ contains each $\alpha_i$ and has exactly two boundary components, each of which is a separating curve in $\Sigma$. 
\end{defn}

For example, every finite set of closed curves admits a carrier which is a ``finite flute", that is, a union of finitely many consecutive copies of $S$ in $\Sigma = S^{\natural \bZ}$. The next lemma says that given two curves $\alpha$ and $\beta$, taking a step in $\Gamma$ from $\alpha$ by lassoing a distant puncture is not a step towards $\beta$.

\begin{lem}\label{lem:lassoing a distant puncture moves away from the origin}
Let $\alpha$ and $\beta$ be distinct curves in a carrier $K$. Let $\ell$ be a lasso from $\alpha$ to a puncture $p\notin K$. Then in $\Gamma$, the vertex $\alpha(\ell)$ does not lie on any geodesic path between $\alpha$ and $\beta$. In particular, $d(\alpha(\ell),\beta) \ge d(\alpha,\beta)$. 
\end{lem}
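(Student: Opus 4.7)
The plan is to apply the forget-puncture operation $\Phi_n$ associated to $p$ and to exploit Lemma~\ref{forgetting a puncture shortens paths}, which guarantees that this operation never increases path lengths in $\Gamma$. Identify $\Sigma$ with the marked strip $X$ so that $p = (n,0)$. Since $K$ is a bounded carrier and $p \notin K$, the puncture $p$ lies on a definite side of $K$. If $p$ is to the right of $K$, I take $\Phi_n = \phi_n \circ \iota$; if to the left, I take $\Phi_n = \psi_n \circ \iota$. In either case the accompanying homeomorphism acts as the identity on the side containing $K$, so $\Phi_n$ fixes every isotopy class of curve contained in $K$; in particular $\Phi_n(\alpha) = \alpha$ and $\Phi_n(\beta) = \beta$.

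The key observation is that $\Phi_n(\alpha(\ell)) = \alpha$. By the construction of the lasso curve as the non-trivial boundary component of a regular neighborhood of $\alpha \cup \ell'$, the subsurface $[\alpha, \alpha(\ell)]$ is an annulus containing the single marked point $p$ in its interior. Applying $\iota$ turns this subsurface into an unmarked annulus, so its two boundary components $\alpha$ and $\alpha(\ell)$ become isotopic in the unmarked surface; composing with $\phi_n$ or $\psi_n$ then yields $\Phi_n(\alpha(\ell)) = \alpha$, as $\alpha \subset K$ lies in the region where the homeomorphism is the identity.

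Given any geodesic $\alpha(\ell) = \gamma_0, \gamma_1, \ldots, \gamma_k = \beta$ of length $k = d(\alpha(\ell),\beta)$ in $\Gamma$, Lemma~\ref{forgetting a puncture shortens paths} produces a path from $\Phi_n(\alpha(\ell)) = \alpha$ to $\Phi_n(\beta) = \beta$ of length at most $k$. This immediately gives $d(\alpha,\beta) \le d(\alpha(\ell),\beta)$, which is the ``in particular'' clause. For the stronger claim, note that $\alpha \ne \alpha(\ell)$ as vertices in $\Gamma$ (they cobound an annulus with a puncture, not an annulus, so they are not isotopic in $\Sigma$). If $\alpha(\ell)$ lay on some geodesic from $\alpha$ to $\beta$, then $d(\alpha,\beta) = d(\alpha,\alpha(\ell)) + d(\alpha(\ell),\beta) = 1 + d(\alpha(\ell),\beta) \ge 1 + d(\alpha,\beta)$, a contradiction.

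The main obstacle I anticipate is the simultaneous verification of the two properties of $\Phi_n$: that it fixes isotopy classes supported in $K$ and that it collapses $\alpha(\ell)$ onto $\alpha$. Both rely on the choice between $\phi_n$ and $\psi_n$, which succeeds precisely because the bounded carrier $K$ and the puncture $p$ sit on opposite sides of each other in the bi-infinite flute. Beyond this bookkeeping, the argument is essentially a direct application of the path-shortening property.
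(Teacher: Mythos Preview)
Your proof is correct and follows essentially the same route as the paper's: both apply the forget-puncture map $\Phi_n$ associated to $p$, use that $\Phi_n$ collapses $\alpha(\ell)$ onto $\alpha$ while fixing $\alpha$ and $\beta$, and invoke Lemma~\ref{forgetting a puncture shortens paths}. The only difference is the order of deduction. The paper first argues that any path from $\alpha$ to $\beta$ passing through $\alpha(\ell)$ can be strictly shortened (because the edge $\alpha\text{--}\alpha(\ell)$ collapses under $\Phi_n$), and from this derives the inequality $d(\alpha(\ell),\beta)\ge d(\alpha,\beta)$ by contradiction. You instead apply $\Phi_n$ directly to a geodesic from $\alpha(\ell)$ to $\beta$ to obtain the inequality first, and then deduce the geodesic statement by the one-line computation $d(\alpha,\beta)=1+d(\alpha(\ell),\beta)\ge 1+d(\alpha,\beta)$. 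Both orderings are equally clean; the underlying mechanism is identical.

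One small remark on your justification that $\Phi_n$ fixes curves in $K$: the phrase ``acts as the identity on the side containing $K$'' is a bit loose, since $\phi_n$ is literally the identity only on the half-strip $(-\infty,n-1]\times[0,1]$, and an arbitrary carrier $K$ with $p\in K_+$ need not sit inside that half-strip as a point set. The paper glosses over this in the same way (``since $\alpha,\beta\subset K$ and $p\notin K$, forgetting $p$ maps $\alpha$ to $\alpha$ and $\beta$ to $\beta$''), so you are in good company; but strictly speaking one should either observe that $K$ can be isotoped into the half-strip before applying $\Phi_n$, or note that separating curves in the flute are determined by their puncture partition and check the effect of $\Phi_n$ on partitions directly.
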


\begin{comment}
\begin{center}
\begin{tikzpicture}[scale=0.6]
  \draw (0, 7) to (9, 7);
  \draw (0, 4) to (9, 4);
  \draw (1, 5.5) to[out=90, in=180] (1, 5.5) to[out=0, in=90] (1.5, 5.5);
  \draw (1, 5.5) to[out=-90, in=180] (1.5, 5.5) to[out=0, in=-90] (1.5, 5.5);
  \draw (1.5, 5.5) to[out=-90, in=90] (1.5, 5.5) to[out=-90, in=90] (1.5, 5.5) to[out=-90, in=90] (1.5, 5.5);
  \draw (3, 5.5) to[out=90, in=180] (3, 5.5) to[out=0, in=90] (3.5, 5.5) to[out=-45, in=90] (3.5, 5.5);
  \draw (2.5, 5.5) to[out=0, in=-135] (3.5, 5.5);
  \draw (5, 5.5) to[out=0, in=0] (5, 5.5) to[out=45, in=180] (5, 5.5) to[out=0, in=90] (5, 5.5) to[out=-45, in=90] (5, 5.5);
  \draw (4.5, 5.5) to[out=0, in=180] (5, 5.5) to[out=0, in=-135] (5.5, 5.5);
  \draw (6.5, 5.5) to[out=90, in=180] (6.5, 5.5) to[out=0, in=90] (7, 5.5);
  \draw (6.5, 5.5) to[out=0, in=0] (6.5, 5.5) to[out=0, in=180] (6.5, 5.5) to[out=0, in=-90] (7, 5.5);
  \draw (8, 5.5) to[out=90, in=180] (8, 5.5) to[out=0, in=135] (8.5, 5.5);
  \draw (8, 5.5) to[out=0, in=-135] (8.5, 5.5);
  \draw (7.5, 7) to[out=-90, in=90] (7.5, 6.5);
  \draw (7.5, 6.5) to[out=-90, in=90] (7.5, 6.5);
  \draw (7.5, 6) to[out=-90, in=90] (7.5, 6);
  \draw (7.5, 5.5) to[out=-90, in=90] (7.5, 5.5);
 % \draw (7.5, 5) to[out=-90, in=90] (7.5, 5);
  \draw[dashed] (7.5, 7) to (7.5, 4);
  \draw (7.5, 7) to [bend right =30](7.5, 4);
   \draw (7.15, 5) to [bend left =30](3, 5.4);
%  \draw (7.5, 4.5) to[out=-90, in=135] (7.5, 4.5);
%  \draw (7.5, 4) to[out=-90, in=135] (7.5, 4);
%  \draw (7.5, 4) to[out=-90, in=135] (7.5, 4);
 %\draw (7.5, 7) to[out=180, in=90] (7, 6.5) to[out=-90, in=0] (7, 5) to[out=180, in=0] (3.5, 5.5) to[out=180, in=90] (2.5, 5.5) to[out=-90, in=180] (2.5, 5) to[out=0, in=90] (7.5, 4.5) to[out=-90, in=90] (7.5, 4.5) to[out=-45, in=90] (7.5, 4.5) to[out=-90, in=90] (7.5, 4) to[out=-45, in=180] (7.5, 4) to[out=-45, in=180] (7.5, 4);
\end{tikzpicture}
\end{center}
\end{comment}

\begin{proof}
Any path from $\alpha$ to $\beta$ which contains $\alpha(\ell)$ can be made shorter by replacing the initial subpath from $\alpha$ to $\alpha(\ell)$ with just the edge between $\alpha$ and $\alpha(\ell)$. So let $P$ be any path in $\Gamma$ from $\alpha$ to $\beta$ which takes its first step at $\alpha(\ell)$. We claim that $P$ is not a geodesic between $\alpha$ and $\beta$. By Lemma \ref{forgetting a puncture shortens paths}, forgetting the puncture $p$ transforms $P$ into another path $P'$ which is at most as long as $P$. Since $\alpha,\beta \subset K$ and $p\notin K$, forgetting $p$ maps $\alpha$ to $\alpha$ and $\beta$ to $\beta$. So $P$ is still a path from $\alpha$ to $\beta$. On the other hand, forgetting $p$ maps $\alpha(\ell)$ to $\alpha$. So $P'$ is strictly shorter than $P$.

If $d(\alpha(\ell),\beta) < d(\alpha,\beta)$, then $d(\alpha(\ell),\beta)=d(\alpha,\beta)-1$ because $\alpha$ and $\alpha(\ell)$ are neighbors. In this case, a geodesic between $\alpha(\ell)$ and $\beta$ together with the edge between $\alpha$ and $\alpha(\ell)$ would give a geodesic path between $\alpha$ and $\beta$ containing $\alpha(\ell)$. It was shown that no such geodesic exists; thus $d(\alpha(\ell),\beta) \ge d(\alpha,\beta)$.
\end{proof}

\subsection{Lasso paths}

Given a single curve $\alpha$ and a lasso $\ell$, we defined the lasso curve $\alpha(\ell)$. We now perform a similar construction given a set of mutually disjoint curves.

\begin{defn}\label{def:straight}
    A sequence of separating curves $\alpha_0,\dots,\alpha_n$ is \textit{straight} if $\alpha_i$ lies on the right side of $\alpha_{i-1}$ for each $i=1,\dots,n$. It is a \textit{straight path} if $(\alpha_0,\dots,\alpha_n)$ is a path in $\Gamma$.
\end{defn}

Let $\alpha_0,\dots,\alpha_n$ be a straight sequence of separating curves, and fix a carrier $K$. Let $\ell$ be an arc from $K_-$ to a puncture $p \in K_+$ such that $\ell$ intersects each $\alpha_i$ exactly once. Such an arc may be inductively constructed since $[\alpha_{i-1},\alpha_i]$ is connected and its boundary components are $\alpha_{i-1}$ and $\alpha_i$. Construct the lasso curves $\alpha_i(\ell)$ one by one, in order of decreasing index starting from $i=n$, so that $\alpha_{i-1}(\ell)$ is disjoint from $\alpha_i(\ell)$. This can always be done by drawing the ``lasso" portion of $\alpha_{i-1}(\ell)$ in a small enough neighborhood of $\ell$. If instead, $\ell$ is an arc from $K_+$ to a puncture $p \in K_-$ which intersects each $\alpha_i$ exactly once, then disjoint lasso curves $\alpha_i(\ell)$ may similarly be drawn. In both constructions, $\alpha_i(\ell)$ lies on the right side of $\alpha_{i-1}(\ell)$ for each $i=1,\dots,n$. So the following lemma is true.

\begin{lem}
    A straight sequence of separating curves remains straight after lassoing as above.
\end{lem}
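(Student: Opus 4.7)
The plan is to reduce the claim to strict nesting of end-and-genus partitions on the $(-)$-sides of the lassoed curves. The construction in the preceding paragraph already produces $\alpha_{i-1}(\ell)$ and $\alpha_i(\ell)$ as disjoint separating curves. Moreover, two disjoint non-isotopic separating curves in $\Sigma$, each separating the maximal ends $e_-$ and $e_+$, are linearly ordered, with the order detected on their $(-)$-side end-and-genus partitions. So it suffices to check that the partition of $(\alpha_{i-1}(\ell))_-$ is strictly contained in that of $(\alpha_i(\ell))_-$.

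First I would track how the lasso affects these partitions. The subsurface $[\alpha_i,\alpha_i(\ell)]$ is by construction a planar annulus carrying exactly the puncture $p$, so the genus on either side of $\alpha_i$ is unchanged by the lasso. In case 1, where $\ell$ runs from $K_-$ to $p\in K_+$, the lasso curve $\alpha_i(\ell)$ lies in $(\alpha_i)_+$ and the annulus sits on the left of $\alpha_i(\ell)$, so $p$ transfers from $(\alpha_i)_+$ into $(\alpha_i(\ell))_-$. Case 2 is symmetric: $p$ transfers out of the left end-set. The direction of this transfer is uniform in $i$, since it depends only on whether $p$ lies in $K_-$ or $K_+$. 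Hence the strict nesting $(\alpha_0)_-\subsetneq(\alpha_1)_-\subsetneq\cdots\subsetneq(\alpha_n)_-$ coming from straightness of the original sequence transports, by adjoining or removing $\{p\}$ uniformly, to strict nesting of the $(-)$-side partitions for the lassoed sequence. Combined with disjointness and comparability, this yields $(\alpha_{i-1}(\ell))_-\subsetneq(\alpha_i(\ell))_-$, which is exactly the statement that $\alpha_i(\ell)$ lies on the right of $\alpha_{i-1}(\ell)$.

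The main structural input is the comparability statement itself. Two disjoint non-isotopic separating curves $\alpha,\beta$ in $\Sigma$ cut $\Sigma$ into three regions: a ``far left'' component containing $e_-$, a ``middle'' component between $\alpha$ and $\beta$, and a ``far right'' component containing $e_+$. The ordering of $\alpha$ and $\beta$ is pinned down by which of the two bounds the $e_-$-component, and this is visible on the $(-)$-side partitions as inclusion of end-and-genus data. With this comparability in hand, the rest of the argument is routine bookkeeping on end-sets.
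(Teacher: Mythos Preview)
Your proof is correct. The paper itself gives essentially no argument here: the sentence immediately preceding the lemma simply asserts ``In both constructions, $\alpha_i(\ell)$ lies on the right side of $\alpha_{i-1}(\ell)$ for each $i=1,\dots,n$. So the following lemma is true,'' treating the claim as visually evident from how the lasso curves are drawn one at a time in decreasing index with nested tubular neighborhoods of $\ell$.

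Your route is genuinely different: rather than appealing to the picture, you reduce the ordering question to a combinatorial one about puncture sets on the $(-)$-side. Since disjoint non-isotopic separating curves in the flute are linearly ordered and that order is read off from the (strictly nested) left-side puncture sets, and since lassoing uniformly toggles membership of the single puncture $p$ in every such set, the strict nesting survives. This buys you a clean formal verification that does not rely on the reader visualizing the nested lasso construction, and it anticipates the Hamming-distance bookkeeping the paper uses in Section~4 (Lemma~\ref{lem:lassoing toggles discrete-type ends} and Lemma~\ref{lem:lassoing increases hamming distance}). The paper's approach, by contrast, is shorter precisely because it offloads the work onto the explicit geometric construction. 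One small remark: your argument as written assumes the original strict nesting $(\alpha_{i-1})_- \subsetneq (\alpha_i)_-$, which in the flute is equivalent to $\alpha_{i-1}$ and $\alpha_i$ being non-isotopic; this is automatic in all the applications (straight paths), but strictly speaking a straight \emph{sequence} could repeat an isotopy class, in which case the lassoed curves are also isotopic and the conclusion is trivial.
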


Moreover, lassoing a straight path again yields a straight path.

\begin{lem}
    If $(\alpha_0,\dots,\alpha_n)$ is a straight path in $\Gamma$ and $\ell$ is a lasso as above, then $(\alpha_0(\ell),\dots,\alpha_n(\ell))$ is also a straight path in $\Gamma$.
\end{lem}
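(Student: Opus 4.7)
The plan is to verify that each consecutive pair $(\alpha_{i-1}(\ell),\alpha_i(\ell))$ is an edge of $\Gamma$; the previous lemma already delivers the straightness of the lassoed sequence. Disjointness is built into the lasso construction, so the content is to show that $[\alpha_{i-1}(\ell),\alpha_i(\ell)]$ is homeomorphic to an annulus with one puncture, which is the adjacency criterion in $\Gamma$ for the bi-infinite flute.

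My strategy is to pass to the larger surface $\Sigma'$ obtained from $\Sigma$ by filling in the puncture $p$. In $\Sigma'$ the small loop of $\alpha_j(\ell)$ that encircles $p$ is null-homotopic, so $\alpha_j(\ell)$ is isotopic to $\alpha_j$ for every $j$. Because the lassoes have been drawn in nested thin tubes along $\ell$, these individual retractions can be performed simultaneously---retracting the innermost lasso first, then the next, and so on---to yield an ambient isotopy of $\Sigma'$ carrying the configuration $(\alpha_j(\ell))_j$ to $(\alpha_j)_j$. Consequently $[\alpha_{i-1}(\ell),\alpha_i(\ell)]_{\Sigma'}$ is homeomorphic to $[\alpha_{i-1},\alpha_i]_{\Sigma'}$.

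To identify $[\alpha_{i-1},\alpha_i]_{\Sigma'}$ with the original annulus-with-puncture, I use that $p\in K_+$ lies to the right of every $\alpha_j$, so $p\notin[\alpha_{i-1},\alpha_i]_\Sigma$; hence filling in $p$ leaves this subsurface unchanged, and $[\alpha_{i-1},\alpha_i]_{\Sigma'}$ is still an annulus with the single puncture $p_i$. To return to $\Sigma$, I would check that $p$ also lies outside $[\alpha_{i-1}(\ell),\alpha_i(\ell)]_\Sigma$: the tubular neighborhood used to define $\alpha_j(\ell)$ contains both $\alpha_j$ and $p$ (as a cusp), and tracking the two components of $\Sigma\setminus\alpha_j(\ell)$ then shows that $p$ lies on the left (i.e.\ $e_-$) side of $\alpha_j(\ell)$ for every $j$. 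Therefore $p$ is not in $[\alpha_{i-1}(\ell),\alpha_i(\ell)]$ either, so puncturing back at $p$ leaves the region unchanged, and it is an annulus with a single puncture, giving the required edge in $\Gamma$. The symmetric case in which $\ell$ runs from $K_+$ to a puncture $p\in K_-$ is handled identically after swapping the roles of left and right.

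The main obstacle is the simultaneous retraction of the nested lasso fingers in $\Sigma'$. One must justify inductively that each retraction is compatible with the previous ones, which is where the nesting forced by drawing each $\alpha_{i-1}(\ell)$ inside a thinner neighborhood of $\ell$ than $\alpha_i(\ell)$ plays an essential role. The secondary issue, the side computation for $p$, is a routine analysis of the boundary components of the tubular neighborhood used to define $\alpha_j(\ell)$.
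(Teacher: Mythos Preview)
Your argument is correct, but it is considerably more elaborate than the paper's. The paper dispatches the lemma in two sentences by reading off the picture: since the finger of $\alpha_{i-1}(\ell)$ is drawn strictly inside the finger of $\alpha_i(\ell)$, the region $[\alpha_{i-1}(\ell),\alpha_i(\ell)]$ is obtained from $[\alpha_{i-1},\alpha_i]$ by removing a thin strip (the inner finger passing through) and attaching a thin strip (the gap between the two fingers on the right of $\alpha_i$). Neither strip touches any puncture other than $p$, and $p$ sits inside the inner finger, so the new region is still an annulus carrying the \emph{same} puncture as $[\alpha_{i-1},\alpha_i]$. No filling, no ambient isotopy, no side computation is needed.

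Your route---fill in $p$, exhibit an ambient isotopy of $\Sigma'$ carrying $(\alpha_j(\ell))_j$ to $(\alpha_j)_j$, then verify $p$ was never in either region---is a valid alternative, and it has the virtue of making the ``forgetting a puncture'' philosophy explicit at the level of configurations rather than just individual curves. The cost is the simultaneous retraction you flag as the main obstacle; it can indeed be done (one isotopy supported on a neighborhood of $\ell\cup\{p\}$ in $\Sigma'$ that pushes the filled disk back along $\ell$ suffices, and the nesting guarantees the curves stay embedded and disjoint throughout), but this is more machinery than the statement requires. Your side computation for $p$ is correct: since $p\in[\alpha_j,\alpha_j(\ell)]$ and $\alpha_j(\ell)$ lies to the right of $\alpha_j$, the puncture is on the left of every $\alpha_j(\ell)$, hence outside $[\alpha_{i-1}(\ell),\alpha_i(\ell)]$.
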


\begin{proof}
    Since $(\alpha_0,\dots,\alpha_n)$ is a path, each $[\alpha_{i-1},\alpha_i]$ is an annulus with one puncture. By the construction of the lasso curves $\alpha_i(\ell)$ and $\alpha_{i-1}(\ell)$ above, $[\alpha_{i-1}(\ell),\alpha_i(\ell)]$ is also an annulus with the same puncture as $[\alpha_{i-1},\alpha_i]$. So $\alpha_{i-1}(\ell)$ and $\alpha_i(\ell)$ are neighbors in $\Gamma$.
\end{proof}

\begin{defn}
    Given a straight path $(\alpha_0,\dots,\alpha_n)$ and a lasso $\ell$ as above, the \textit{lasso path} defined by the $\alpha_i$ and $\ell$ is the straight path $(\alpha_0(\ell),\dots,\alpha_n(\ell))$.
\end{defn}

See Figure \ref{fig:lasso-seq} for an example of a straight path and a lasso path.

\subsection{Flux}

The last ingredient needed for this section is the notion of flux. This will provide a lower bound for the graph distance $d$ in $\Gamma$. For disjoint separating curves $\alpha$ and $\beta$, define $F_0(\alpha,\beta)$ to be the number of punctures in $[\alpha,\beta]$. Note that $F_0(\alpha,\beta)=0$ if and only if $[\alpha,\beta]$ is an annulus if and only if $\alpha$ and $\beta$ are isotopic. To define the flux between pairs of intersecting curves, the next two lemmas are needed.

\begin{lem}\label{lem:flux additivity}
    If $\alpha,\beta,\gamma$ is a straight sequence of separating curves, then $F_0(\alpha,\gamma) = F_0(\alpha,\beta) + F_0(\beta,\gamma)$.
\end{lem}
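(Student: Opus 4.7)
The plan is to reduce the lemma to a set-theoretic decomposition of the subsurface $[\alpha,\gamma]$ along the curve $\beta$, and then observe that $\beta$ carries no punctures so the counts add.

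First, I would establish that $\gamma$ lies on the right side of $\alpha$, so that $[\alpha,\gamma]$ is defined. Since $\beta$ lies on the right side of $\alpha$, the curve $\alpha$ and the component $\alpha_-$ are both disjoint from $\beta$ and contain $e_-$, hence lie in $\beta_-$; equivalently, $\beta_+ \subset \alpha_+$. Applying the same argument to $\beta$ and $\gamma$ gives $\gamma \subset \beta_+ \subset \alpha_+$, so $\gamma$ lies on the right side of $\alpha$. Symmetrically $\beta_- \subset \gamma_-$.

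Next, I would prove the decomposition
\[
[\alpha,\gamma] \;=\; [\alpha,\beta]\cup [\beta,\gamma], \qquad [\alpha,\beta]\cap [\beta,\gamma]=\beta.
\]
For the first equality, unfolding the definitions,
\[
[\alpha,\beta]\cup[\beta,\gamma]
= \alpha\cup\beta\cup\gamma \cup (\alpha_+\cap\beta_-)\cup(\beta_+\cap\gamma_-),
\]
and each piece on the right lies in $(\alpha_+\cap\gamma_-)\cup\alpha\cup\gamma = [\alpha,\gamma]$ by the containments $\beta_-\subset\gamma_-$, $\beta_+\subset\alpha_+$, and $\beta\subset\alpha_+\cap\gamma_-$ established above. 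Conversely, any point of $\alpha_+\cap\gamma_-$ lies in exactly one of $\beta_-$, $\beta$, or $\beta_+$, and is therefore captured by the union. For the intersection, $\alpha_+\cap\beta_-$ and $\beta_+\cap\gamma_-$ are disjoint (they lie on opposite sides of $\beta$), $\alpha$ is disjoint from $[\beta,\gamma]\subset \beta\cup\beta_+\cup\gamma$ since $\alpha\subset\beta_-$, and symmetrically $\gamma$ is disjoint from $[\alpha,\beta]$; the only overlap is the common boundary curve $\beta$.

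Finally, since $\beta$ is a simple closed curve containing no punctures of $\Sigma$, inclusion-exclusion on the number of punctures in the two pieces gives
\[
F_0(\alpha,\gamma) \;=\; F_0(\alpha,\beta) + F_0(\beta,\gamma) - 0,
\]
which is the claim. I do not anticipate any serious obstacle here; the only thing to handle carefully is verifying the decomposition cleanly, which is purely a matter of unpacking the definition of ``right side'' and the convention $[\alpha,\beta] = (\alpha_+\cap\beta_-)\cup\alpha\cup\beta$.
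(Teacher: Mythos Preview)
Your proposal is correct and follows essentially the same approach as the paper: both argue that $[\alpha,\gamma]$ decomposes as $[\alpha,\beta]\cup[\beta,\gamma]$ glued along $\beta$, from which additivity of the puncture count follows. The paper simply asserts this decomposition in one line, whereas you carefully verify the set-theoretic details; your write-up is a faithful expansion of what the paper leaves implicit.
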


\begin{proof}
    By definition, $\beta$ lies on the right side of $\alpha$ and $\gamma$ lies on the right side of $\beta$. Then $[\alpha,\gamma]$ is equal to the gluing of $[\alpha,\beta]$ and $[\beta,\gamma]$ along $\gamma$, and the equation follows.
\end{proof}

For separating curves $\alpha$ and $\beta$, if $\gamma$ is a separating curve in either $\alpha_+ \cap \beta_+$ or $\alpha_-\cap\beta_-$, define
\[
    F(\alpha,\beta;\gamma) =
    \abs{F_0(\alpha,\gamma) - F_0(\beta,\gamma)}
\]
The next lemma shows that the value of $F(\alpha,\beta;\gamma)$ does not depend on the choice of $\gamma$.

\begin{lem}\label{lem:reference curve independence}
    Let $\alpha$ and $\beta$ be separating curves. If $\gamma$ and $\gamma'$ are any two separating curves, each of which lies in either $\alpha_+ \cap \beta_+$ or $\alpha_-\cap\beta_-$, then $F(\alpha,\beta;\gamma) = F(\alpha,\beta;\gamma')$.
\end{lem}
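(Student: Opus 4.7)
The plan is to reduce the claim, via the additivity Lemma \ref{lem:flux additivity}, to a single algebraic identity that pins down $F(\alpha,\beta;\gamma)$ regardless of the admissible reference curve chosen.

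First I would prove the following key identity: for any separating curves $\gamma \in \alpha_+ \cap \beta_+$ and $\gamma' \in \alpha_- \cap \beta_-$,
\[
F_0(\alpha,\gamma) - F_0(\beta,\gamma) \;=\; -\bigl(F_0(\alpha,\gamma') - F_0(\beta,\gamma')\bigr).
\]
Both $\gamma',\alpha,\gamma$ and $\gamma',\beta,\gamma$ are straight sequences in the sense of Definition \ref{def:straight}: the hypothesis $\gamma' \in \alpha_-$ together with the fact that $\alpha$ can reach $e_+$ without crossing $\gamma'$ forces $\alpha \in \gamma'_+$, and analogously for $\beta$. Applying Lemma \ref{lem:flux additivity} to both sequences and equating the two resulting expressions for $F_0(\gamma',\gamma)$ yields
\[
F_0(\gamma',\alpha) + F_0(\alpha,\gamma) = F_0(\gamma',\beta) + F_0(\beta,\gamma),
\]
from which the identity follows after using the symmetry $F_0(\gamma',\alpha) = F_0(\alpha,\gamma')$ built into the convention $[\alpha,\gamma'] = [\gamma',\alpha]$.

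Next I would use this identity to dispose of the three cases of the lemma. If $\gamma$ and $\gamma'$ lie on opposite sides, one in $\alpha_+ \cap \beta_+$ and the other in $\alpha_- \cap \beta_-$, then taking absolute values in the identity gives $F(\alpha,\beta;\gamma) = F(\alpha,\beta;\gamma')$ immediately. If both lie in $\alpha_+ \cap \beta_+$, I would pick an auxiliary separating curve $\gamma''$ in $\alpha_- \cap \beta_-$; such a $\gamma''$ always exists because $\alpha_- \cap \beta_-$ is an open subsurface containing a neighborhood of $e_-$, and therefore contains separating curves arbitrarily close to that end. The identity, applied to each of the pairs $(\gamma,\gamma'')$ and $(\gamma',\gamma'')$, then forces $F(\alpha,\beta;\gamma) = F(\alpha,\beta;\gamma'') = F(\alpha,\beta;\gamma')$. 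The case where both $\gamma,\gamma' \in \alpha_- \cap \beta_-$ is symmetric, using an auxiliary curve in $\alpha_+ \cap \beta_+$.

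The only genuinely topological point that needs care is the consistency of the ``right side'' relation, namely that $\gamma' \in \alpha_-$ forces $\alpha \in \gamma'_+$, since this is what legitimizes invoking Lemma \ref{lem:flux additivity} on the sequence $\gamma',\alpha,\gamma$. This is a one-line check: $\gamma' \subset \alpha_-$ is disjoint from $\alpha_+$, so $\alpha$ lies in the component of $\Sigma \setminus \gamma'$ containing $e_+$, which is $\gamma'_+$ by definition. I do not expect the possibility that $\alpha$ and $\beta$ themselves intersect to cause any additional difficulty, because additivity is only invoked on sequences whose outer terms $\gamma$ and $\gamma'$ are separating curves disjoint from both $\alpha$ and $\beta$ by virtue of lying in $\alpha_\pm \cap \beta_\pm$.
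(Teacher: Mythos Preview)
Your proof is correct and follows essentially the same approach as the paper: both arguments rest entirely on the additivity Lemma \ref{lem:flux additivity}, and your cross-side identity is exactly the computation the paper carries out in its second case. The only tactical difference is in the same-side case, where the paper picks its auxiliary curve $\gamma''$ further out on the \emph{same} side (in $\gamma_+ \cap \gamma'_+$) rather than on the opposite side as you do; both choices work equally well.
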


\begin{proof}
    Assume without loss of generality that $\gamma \subset \alpha_+\cap\beta_+$; the case $\gamma \subset \alpha_- \cap \beta_-$ is handled similarly. Then there are two cases corresponding to whether $\gamma'$ lies in $\alpha_+ \cap \beta_+$ or $\alpha_-\cap\beta_-$. First suppose that $\gamma'$ also lies in $\alpha_+\cap\beta_+$. Let $\gamma''$ be a separating curve in $\gamma_+ \cap \gamma'_+$. Then by Lemma \ref{lem:flux additivity},
    \begin{align*}
        F(\alpha,\beta;\gamma) = 
        \abs{F_0(\alpha,\gamma) - F_0(\beta,\gamma)} 
        &=
        \abs{F_0(\alpha,\gamma) + F_0(\gamma,\gamma'') - F_0(\beta,\gamma) - F_0(\gamma,\gamma'')} \\&=
        \abs{F_0(\alpha,\gamma'') - F_0(\beta,\gamma'')} \\&=
        F(\alpha,\beta;\gamma'').
    \end{align*}
    Replacing $\gamma$ with $\gamma'$ in the above calculation also yields $F(\alpha,\beta;\gamma') = F(\alpha,\beta;\gamma'')$. Hence, $F(\alpha,\beta;\gamma) = F(\alpha,\beta;\gamma')$. Now suppose that $\gamma'$ lies in $\alpha_-\cap\beta_-$. By Lemma \ref{lem:flux additivity},
    \[
        F_0(\gamma',\gamma) = F_0(\gamma',\alpha) + F_0(\alpha,\gamma)
        \qquad \text{and} \qquad 
        F_0(\gamma',\gamma) = F_0(\gamma',\beta) + F_0(\beta,\gamma)
    \]
    Hence,
    \begin{align*}
        F(\alpha,\beta;\gamma) = 
        \abs{F_0(\alpha,\gamma) - F_0(\beta,\gamma)} 
        &=
        \abs{F_0(\gamma',\gamma) - F_0(\gamma',\alpha) - F_0(\gamma',\gamma) + F_0(\gamma',\beta)} \\&=
        \abs{F_0(\gamma',\alpha) - F_0(\gamma',\beta)} \\&=
        F(\alpha,\beta;\gamma').
    \end{align*}
\end{proof}

This allows for the following definition.

\begin{defn}
    Let $\alpha$ and $\beta$ be separating curves. The \textit{flux} between $\alpha$ and $\beta$, denoted by $F(\alpha,\beta)$, is defined to be $F(\alpha,\beta;\gamma)$ for some (any) choice of separating curve $\gamma$ in $\alpha_+\cap\beta_+$ or $\alpha_-\cap\beta_-$.
\end{defn}

Indeed, this definition agrees with $F_0$ when $\alpha$ and $\beta$ are disjoint.

\begin{lem}\label{lem:flux=F_0}
    If $\alpha$ and $\beta$ are disjoint separating curves, then $F(\alpha,\beta) = F_0(\alpha,\beta)$.
\end{lem}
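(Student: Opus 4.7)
The plan is to directly unwind the definition of $F(\alpha,\beta)$ using a convenient choice of reference curve $\gamma$, and then to apply the additivity Lemma \ref{lem:flux additivity}. By Lemma \ref{lem:reference curve independence}, the computation of $F(\alpha,\beta;\gamma)$ is independent of the choice of $\gamma$, so it suffices to pick any single $\gamma$ that makes the arithmetic transparent.

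First I would dispense with the degenerate case. If $\alpha$ and $\beta$ are isotopic as separating curves, then $[\alpha,\beta]$ is an annulus and $F_0(\alpha,\beta) = 0$; also $F(\alpha,\beta;\gamma) = |F_0(\alpha,\gamma) - F_0(\beta,\gamma)| = 0$ for any valid $\gamma$, since $F_0(\alpha,\gamma) = F_0(\beta,\gamma)$ by Lemma \ref{lem:flux additivity} applied to the straight sequence obtained from $\alpha,\beta,\gamma$. Assuming now $\alpha$ and $\beta$ are not isotopic, since they are disjoint and both separating, exactly one lies on the right side of the other; swapping the roles of $\alpha$ and $\beta$ if necessary, assume $\beta$ lies on the right side of $\alpha$, so that $\alpha,\beta$ is a straight sequence in the sense of Definition \ref{def:straight}.

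Now I would choose a reference curve $\gamma$ inside $\beta_+$. Such a separating curve exists (e.g., take a far enough translate by the translation $h$). Then $\gamma$ lies in $\alpha_+ \cap \beta_+$, so it is a valid choice for computing the flux, and moreover $\alpha,\beta,\gamma$ is a straight sequence. Applying Lemma \ref{lem:flux additivity} gives
\[
F_0(\alpha,\gamma) = F_0(\alpha,\beta) + F_0(\beta,\gamma).
\]
Since $F_0$ counts punctures, it is non-negative, so
\[
F(\alpha,\beta) = F(\alpha,\beta;\gamma) = \bigl|F_0(\alpha,\gamma) - F_0(\beta,\gamma)\bigr| = F_0(\alpha,\beta),
\]
as desired.

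There is essentially no obstacle here: the statement is a near-tautology once the additivity lemma is available and the reference curve is chosen on the correct side. The only thing to be careful about is the WLOG step, which needs the observation that disjoint non-isotopic separating curves are automatically ordered by the $\alpha_+/\alpha_-$ convention, so that one of them actually lies in the right half-space of the other.
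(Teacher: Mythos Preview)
Your proof is correct and follows essentially the same approach as the paper: assume without loss of generality that $\beta$ lies on the right side of $\alpha$, choose $\gamma \subset \beta_+ = \alpha_+ \cap \beta_+$, and apply Lemma~\ref{lem:flux additivity} to get $F_0(\alpha,\gamma) = F_0(\alpha,\beta) + F_0(\beta,\gamma)$, whence the absolute value collapses to $F_0(\alpha,\beta)$. The only difference is that you spell out the degenerate isotopic case and the justification for the WLOG step, both of which the paper leaves implicit.
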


\begin{proof}
    Assume without loss of generality that $\beta$ lies on the right side of $\alpha$. Let $\gamma$ be a separating curve in $\alpha_+ \cap \beta_+ = \beta_+$. Then by Lemma \ref{lem:flux additivity},
    \[
        F(\alpha,\beta) =
        \abs{F_0(\alpha,\gamma) - F_0(\beta,\gamma)} =
        \abs{F_0(\alpha,\beta) + F_0(\beta,\gamma) - F_0(\beta,\gamma)} =
        F_0(\alpha,\beta).
    \]
\end{proof}

Observe that if $\alpha$ and $\beta$ are neighbors in $\Gamma$, then $[\alpha,\beta]$ has one puncture and so $F(\alpha,\beta)=F_0(\alpha,\beta) = 1$. The next lemma implies that $F$ is a pseudometric on $\Gamma$.

\begin{lem}\label{lem:flux has triangle inequality}
All separating curves $\alpha,\beta,\gamma$ satisfy $F(\alpha,\beta) \le F(\alpha,\gamma) + F(\gamma,\beta)$.
\end{lem}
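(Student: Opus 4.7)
The strategy is to reduce the desired triangle inequality to the ordinary triangle inequality for absolute values on the real line, by choosing a single reference curve for all three flux computations simultaneously. By Lemma \ref{lem:reference curve independence}, $F(\alpha,\beta)$ does not depend on the choice of reference curve, as long as that curve lies on the same side of both $\alpha$ and $\beta$. The key observation is that this flexibility is strong enough to allow a common reference curve for any three separating curves.

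First I would show that there exists a separating curve $\delta$ lying simultaneously in $\alpha_+ \cap \beta_+ \cap \gamma_+$. This is possible because each of $\alpha, \beta, \gamma$ is a single separating curve with $e_+$ as one of the ends of its right side, so each right side contains all separating curves sufficiently close to $e_+$; equivalently, choose $\delta$ far enough to the right in the decomposition $\Sigma = S^{\natural \bZ}$ to the right of the carriers of $\alpha, \beta, \gamma$. (In the bi-infinite flute case, this is transparent; in the general translatable setting this step uses only that $e_+$ is a common end of every right side.)

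With such a $\delta$ fixed, Lemma \ref{lem:reference curve independence} gives
\[
F(\alpha,\beta) = |F_0(\alpha,\delta) - F_0(\beta,\delta)|, \quad
F(\alpha,\gamma) = |F_0(\alpha,\delta) - F_0(\gamma,\delta)|, \quad
F(\gamma,\beta) = |F_0(\gamma,\delta) - F_0(\beta,\delta)|.
\]
The desired inequality then reduces to
\[
|a - b| \le |a - c| + |c - b|
\]
with $a = F_0(\alpha,\delta)$, $b = F_0(\beta,\delta)$, $c = F_0(\gamma,\delta)$, which is the standard triangle inequality for the absolute value on $\bR$.

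The only nontrivial point, and thus the main obstacle, is verifying the existence of a common reference curve $\delta$. Once that is in hand, the inequality is automatic. I do not anticipate any complication from the fact that $\alpha, \beta, \gamma$ may pairwise intersect, because the definition of $F$ has already absorbed that issue through the reference curve formalism.
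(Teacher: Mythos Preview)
Your approach is correct and essentially identical to the paper's proof: choose a common reference curve $\eta$ in $\alpha_+ \cap \beta_+ \cap \gamma_+$ and reduce to the triangle inequality for absolute values. The paper does not spell out the existence of such an $\eta$, so your remark about pushing far enough toward $e_+$ is a welcome addition, but otherwise the arguments coincide.
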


\begin{proof}
Let $\eta$ be a separating curve in $\alpha_+ \cap \beta_+ \cap \gamma_+$. Then
\begin{align*}
    F(\alpha,\beta) =
    \abs{F_0(\alpha,\eta)-F_0(\beta,\eta)} &=
    \abs{F_0(\alpha,\eta) - F_0(\gamma,\eta) + F_0(\gamma,\eta) - F_0(\beta,\eta)} \\&\le
    \abs{F_0(\alpha,\eta)-F_0(\gamma,\eta)}+\abs{F_0(\gamma,\eta)-F_0(\beta,\eta)} \\&=
    F(\alpha,\gamma)+F(\gamma,\beta).
\end{align*}
\end{proof}

However, it can happen that $F(\alpha,\beta) = 0$ for distinct $\alpha,\beta$. In fact, the Hamming distance (Definition \ref{defn:hamming-dist}) can be used to find $\alpha$ and $\beta$ with $F(\alpha,\beta)=0$ such that $d(\alpha,\beta)$ is arbitrarily large.

The notion of flux is useful yet because it provides a lower bound on the graph distance $d$ in $\Gamma$.

\begin{lem}\label{lem:flux is a lower bound}
All separating curves $\alpha$ and $\beta$ satisfy $d(\alpha,\beta) \ge F(\alpha,\beta)$.
\end{lem}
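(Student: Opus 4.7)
The plan is to exploit the fact that flux is a pseudometric (Lemma \ref{lem:flux has triangle inequality}) on $\Gamma$ which assigns value exactly $1$ to every edge. Once both of these facts are in hand, the desired inequality follows from a standard one-line argument along a geodesic.

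First I would unpack the edge case. If $\alpha$ and $\beta$ are neighbors in $\Gamma$, then by definition of the translatable curve graph (specialized to the bi-infinite flute, where $\cS$ consists only of the once-punctured annulus), $[\alpha,\beta]$ is an annulus with exactly one puncture. In particular, $\alpha$ and $\beta$ are disjoint, so Lemma \ref{lem:flux=F_0} gives $F(\alpha,\beta) = F_0(\alpha,\beta)$, and the definition of $F_0$ gives $F_0(\alpha,\beta) = 1$. This is precisely the observation made immediately before Lemma \ref{lem:flux has triangle inequality}, so I would simply cite it.

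Next, let $n = d(\alpha,\beta)$ and fix a geodesic path $\alpha = \alpha_0, \alpha_1, \ldots, \alpha_n = \beta$ in $\Gamma$. Iterating the triangle inequality from Lemma \ref{lem:flux has triangle inequality} gives
\[
F(\alpha,\beta) \;\le\; \sum_{i=1}^n F(\alpha_{i-1},\alpha_i) \;=\; \sum_{i=1}^n 1 \;=\; n \;=\; d(\alpha,\beta),
\]
which is exactly the desired inequality.

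There is really no main obstacle here: the work is entirely front-loaded into Lemma \ref{lem:flux has triangle inequality} (triangle inequality) and Lemma \ref{lem:flux=F_0} (agreement of $F$ with $F_0$ on disjoint pairs). The content of the present lemma is just that flux is $1$-Lipschitz with respect to the edge relation, which a triangle inequality argument along a geodesic converts into the stated lower bound on $d$.
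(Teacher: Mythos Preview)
Your proposal is correct and follows exactly the same approach as the paper: take a geodesic, use the triangle inequality for $F$ (Lemma \ref{lem:flux has triangle inequality}) together with the fact that $F$ equals $1$ on edges, and sum. The paper's proof is slightly terser about the edge case, simply citing that $F$ is a pseudometric, but the argument is identical.
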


\begin{proof}
Suppose $(\gamma_0=\alpha,\gamma_1,\dots,\gamma_n=\beta)$ is a geodesic path in $\Gamma$ between $\alpha$ and $\beta$. Since $F$ is a pseudometric  by Lemma \ref{lem:flux has triangle inequality},
\[
    F(\alpha,\beta) \le
    \sum_{i=0}^{n-1} F(\gamma_i,\gamma_{i+1}) =
    n =
    d(\alpha,\beta).
\]
\end{proof}

It can now be shown that straight paths are geodesic.

\begin{lem}\label{lem:straight paths are geodesic}
    If $\alpha_0,\dots,\alpha_n$ is a straight path, then $d(\alpha_0,\alpha_n) = n$.
\end{lem}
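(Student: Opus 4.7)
The plan is to establish matching upper and lower bounds for $d(\alpha_0,\alpha_n)$.

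The upper bound $d(\alpha_0,\alpha_n)\le n$ is immediate: by hypothesis $(\alpha_0,\dots,\alpha_n)$ is itself a path in $\Gamma$ of length $n$, so the graph distance is at most $n$.

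For the lower bound, I would use flux. First observe that since $\alpha_{i-1}$ and $\alpha_i$ are adjacent in $\Gamma$, the subsurface $[\alpha_{i-1},\alpha_i]$ is an annulus with one puncture, so $F_0(\alpha_{i-1},\alpha_i)=1$. Because the sequence is straight, $\alpha_i$ lies on the right side of $\alpha_{i-1}$, hence these curves are disjoint and Lemma \ref{lem:flux=F_0} gives $F(\alpha_{i-1},\alpha_i)=F_0(\alpha_{i-1},\alpha_i)=1$. Next I would observe that straightness propagates: an easy induction shows that $\alpha_n$ lies on the right side of $\alpha_0$ (since the right side of $\alpha_i$ is contained in the right side of $\alpha_{i-1}$ whenever $\alpha_i$ lies on the right side of $\alpha_{i-1}$), so $\alpha_0$ and $\alpha_n$ are disjoint. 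Iterating the additivity statement of Lemma \ref{lem:flux additivity} along the straight sequence then yields
\[
F_0(\alpha_0,\alpha_n)=\sum_{i=1}^{n}F_0(\alpha_{i-1},\alpha_i)=n,
\]
and another application of Lemma \ref{lem:flux=F_0} gives $F(\alpha_0,\alpha_n)=n$.

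Finally, combining Lemma \ref{lem:flux is a lower bound} with the calculation above yields
\[
n=F(\alpha_0,\alpha_n)\le d(\alpha_0,\alpha_n)\le n,
\]
and the proof is complete. No real obstacle is expected here; the only subtlety is verifying that straightness of the sequence forces $\alpha_0$ and $\alpha_n$ to be disjoint (so that the ``$F=F_0$'' equality applies), which is a direct consequence of the nesting of right sides.
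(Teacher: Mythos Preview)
Your proof is correct and follows essentially the same route as the paper: the upper bound is the obvious one, and the lower bound comes from computing $F(\alpha_0,\alpha_n)=n$ via the additivity of $F_0$ along the straight sequence and then invoking Lemma~\ref{lem:flux is a lower bound}. You are slightly more explicit than the paper about why $\alpha_0$ and $\alpha_n$ are disjoint so that $F=F_0$, but the argument is the same.
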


\begin{proof}
    Clearly, $d(\alpha_0,\alpha_n) \le n$. On the other hand, it follows from an inductive argument using Lemma \ref{lem:flux additivity} that $F(\alpha_0,\alpha_n) = n$. Then Lemma \ref{lem:flux is a lower bound} implies $d(\alpha_0,\alpha_n) \ge n$.
\end{proof}

In general, flux only gives a lower bound on the graph distance. For disjoint separating curves, however, the flux is equal to the graph distance.

\begin{lem}\label{lem:flux=distance (flute)}
    If $\alpha$ and $\beta$ are disjoint separating curves, then there exists a straight path from $\alpha$ to $\beta$ of length $F(\alpha,\beta)$. In particular, $d(\alpha,\beta) = F(\alpha,\beta)$.
\end{lem}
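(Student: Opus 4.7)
The plan is to directly construct the desired straight path by subdividing the subsurface $[\alpha,\beta]$ one puncture at a time. Assume without loss of generality that $\beta$ lies on the right side of $\alpha$. By Lemma~\ref{lem:flux=F_0}, $F(\alpha,\beta) = F_0(\alpha,\beta) = n$, where $n$ is the number of punctures in $[\alpha,\beta]$. If $n = 0$, then $\alpha$ and $\beta$ bound an annulus, hence are isotopic and equal as vertices in $\Gamma$, and the trivial path of length $0$ works. So assume $n \geq 1$.

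First I would identify $[\alpha,\beta]$ topologically: since $\Sigma$ is the bi-infinite flute (genus $0$) and $\alpha,\beta$ are disjoint separating curves, the subsurface $[\alpha,\beta]$ has two boundary components $\alpha$ and $\beta$, genus $0$, and exactly $n$ punctures. Hence $[\alpha,\beta]$ is homeomorphic to an annulus with $n$ marked points. Pick such a homeomorphism, and order the punctures $p_1,\ldots,p_n$ linearly, say from the $\alpha$-side to the $\beta$-side.

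For each $i = 1, \ldots, n-1$, choose a simple closed curve $\gamma_i$ in the interior of $[\alpha,\beta]$ that separates $\{p_1,\ldots,p_i\}$ from $\{p_{i+1},\ldots,p_n\}$; under the chosen identification with an annulus this is simply a core-parallel curve sitting between $p_i$ and $p_{i+1}$. Each $\gamma_i$ is separating in $\Sigma$ because it splits $\Sigma$ into two components, one containing $\alpha$ (hence $e_-$) and the other containing $\beta$ (hence $e_+$). Set $\gamma_0 = \alpha$ and $\gamma_n = \beta$. Then by construction $(\gamma_0,\gamma_1,\ldots,\gamma_n)$ is a straight sequence in the sense of Definition~\ref{def:straight}, and $[\gamma_{i-1},\gamma_i]$ is an annulus containing the single puncture $p_i$, so $\gamma_{i-1}$ and $\gamma_i$ are neighbors in $\Gamma$. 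Thus $(\gamma_0,\ldots,\gamma_n)$ is a straight path in $\Gamma$ of length $n = F(\alpha,\beta)$.

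For the distance equality, this construction gives $d(\alpha,\beta) \le n$, while Lemma~\ref{lem:flux is a lower bound} supplies the reverse inequality $d(\alpha,\beta) \ge F(\alpha,\beta) = n$. Hence $d(\alpha,\beta) = F(\alpha,\beta)$. The only mildly subtle point is the verification that each $\gamma_i$ is genuinely separating in $\Sigma$ (not just in $[\alpha,\beta]$) and that no $\gamma_i$ bounds a disk or a once-punctured disk, which is immediate from the partition of punctures into two nonempty sets on either side together with the presence of $\alpha$ and $\beta$ on the two sides. I do not expect any real obstacle beyond this bookkeeping.
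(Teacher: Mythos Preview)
Your proof is correct and follows essentially the same approach as the paper: both construct the straight path by peeling off the punctures of $[\alpha,\beta]$ one at a time, and both finish by combining the resulting upper bound with the flux lower bound. The only cosmetic difference is that the paper builds the intermediate curves $\gamma_i$ via the lasso operation developed earlier in the section (taking $\gamma_i = \gamma_{i-1}(\ell_i)$ for an arc $\ell_i$ in $[\gamma_{i-1},\beta]$ to $p_i$), whereas you identify $[\alpha,\beta]$ with a marked annulus and take core-parallel curves directly; these produce the same curves up to isotopy.
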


\begin{proof}
    Since $\alpha$ and $\beta$ are disjoint, $F(\alpha,\beta) = F_0(\alpha,\beta)$ is equal to the number of punctures in $[\alpha,\beta]$. Denote these punctures by $p_1,\dots,p_n$. Put $\alpha_0=\alpha$. For $i=1,\dots,n$, inductively define $\alpha_i$ to be the lasso curve $\alpha_{i-1}(\ell_i)$, where $\ell_i$ is an arc from $\alpha_{i-1}$ to $p_i$ contained in $[\alpha_{i-1},\beta]$. By construction, $[\alpha_{i-1},\alpha_i]$ is an annulus with one puncture, and so $d(\alpha_{i-1},\alpha_i) = 1$. Moreover, $\alpha_n$ is isotopic to $\beta$ because $[\alpha_n,\beta]$ is just an annulus. Hence, $\alpha_0,\dots,\alpha_n$ is a straight path from $\alpha$ to $\beta$ of length $F(\alpha,\beta)$. By Lemma \ref{lem:straight paths are geodesic}, $d(\alpha,\beta) = F(\alpha,\beta)$.
\end{proof}

Lastly, flux gives a notion of left and right for pairs of separating curves which are not disjoint.

\begin{defn}
    For separating curves $\alpha$ and $\beta$, we say that $\beta$ is \textit{flux-right of} $\alpha$ if $F_0(\alpha,\eta) \ge F_0(\beta,\eta)$ holds for every separating curve $\eta$ in $\alpha_+ \cap \beta_+$. We say that $\beta$ is \textit{flux-left of} $\alpha$ if the inequality holds for every separating curve $\eta$ in $\alpha_- \cap \beta_-$.
\end{defn}

We collect some basic facts about this notion. Lemma \ref{lem:flux additivity} is used throughout the proof of the next lemma without explicit mention.

\begin{lem}\label{lem:flux-right facts}
    Let $\alpha$ and $\beta$ be separating curves. The following statements are true.
    \begin{enumerate}
        \item If $\beta$ lies on the right side of $\alpha$, then $\beta$ is flux-right of $\alpha$ and $\alpha$ is flux-left of $\beta$.

        \item $\beta$ must be either flux-right or flux-left of $\alpha$.

        \item $\beta$ is flux-right of $\alpha$ if and only if $\alpha$ is flux-left of $\beta$.

        \item If $\gamma$ is a separating curve such that $\beta$ is flux-right of $\alpha$ and $\gamma$ is flux-right of $\beta$, then $\gamma$ is flux-right of $\alpha$.

        \item $\beta$ is both flux-right and flux-left of $\alpha$ if and only if $F(\alpha,\beta)=0$.

        \item $\beta$ is flux-right of $\alpha$ if and only if $F(\alpha,\gamma) > F(\alpha,\beta)$ holds for every separating curve $\gamma \subset \beta_+$ not isotopic to $\beta$, and $\beta$ is flux-left of $\alpha$ if and only if the inequality holds for every separating curve $\gamma \subset \beta_-$ not isotopic to $\beta$. 
    \end{enumerate}
\end{lem}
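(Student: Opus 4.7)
The plan is to extract from the proof of Lemma~\ref{lem:reference curve independence} a sharper, sign-preserving version of its conclusion, and then to deduce the six assertions in a chosen order. The central observation, which I will call the \emph{key identity}, is that the argument given in Lemma~\ref{lem:reference curve independence} actually proves, without taking absolute values, that the integer $F_0(\alpha,\eta) - F_0(\beta,\eta)$ depends only on $\alpha$, $\beta$, and the side ($\alpha_+ \cap \beta_+$ versus $\alpha_- \cap \beta_-$) on which $\eta$ lies. Moreover, applying Lemma~\ref{lem:flux additivity} to the straight sequences $\eta',\alpha,\eta$ and $\eta',\beta,\eta$ (for $\eta \in \alpha_+ \cap \beta_+$ and $\eta' \in \alpha_- \cap \beta_-$) yields
\[
F_0(\alpha,\eta) - F_0(\beta,\eta) \;=\; F_0(\beta,\eta') - F_0(\alpha,\eta'),
\]
so the signs on opposite sides are opposite. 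Thus the condition ``$\beta$ is flux-right of $\alpha$'' is equivalent to the single inequality $F_0(\alpha,\eta) \ge F_0(\beta,\eta)$ for one (hence every) $\eta \in \alpha_+ \cap \beta_+$, and similarly for flux-left.

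I would then prove the six parts in the order (1), (2), (3), (4), (5), (6). Part~(1) is immediate from Lemma~\ref{lem:flux additivity} applied to the straight sequences $\alpha,\beta,\eta$ and $\eta',\alpha,\beta$. Part~(2) follows from the key identity because the sign of $F_0(\alpha,\eta) - F_0(\beta,\eta)$ is either nonnegative or nonpositive. Part~(3) is also immediate from the identity, since a flux-right inequality on one side is the same as a flux-left inequality on the other. For part~(4), choose $\eta \in \alpha_+ \cap \beta_+ \cap \gamma_+$ (such an $\eta$ exists because all three right sides have $e_+$ as an end); the inequalities $F_0(\alpha,\eta) \ge F_0(\beta,\eta) \ge F_0(\gamma,\eta)$ chain, giving the desired sign on the triple intersection, which by constancy suffices. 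Part~(5) combines (3) with the identity: being both flux-right and flux-left of $\alpha$ forces $F_0(\alpha,\eta) - F_0(\beta,\eta)$ to be simultaneously $\ge 0$ and $\le 0$, hence zero, which is equivalent to $F(\alpha,\beta) = 0$.

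Part~(6) is the subtlest. For the forward direction, if $\gamma \subset \beta_+$ and is not isotopic to $\beta$, then $\gamma$ is flux-right of $\beta$ by (1), hence flux-right of $\alpha$ by (4). Picking $\eta \in \alpha_+ \cap \beta_+ \cap \gamma_+$, the additivity $F_0(\beta,\eta) = F_0(\beta,\gamma) + F_0(\gamma,\eta)$ combined with the sign choices gives $F(\alpha,\gamma) - F(\alpha,\beta) = F_0(\beta,\gamma) \ge 1$. For the converse I would argue the contrapositive: if $\beta$ is not flux-right of $\alpha$, then by (2) and (5) it is flux-left of $\alpha$ with $F(\alpha,\beta) \ge 1$. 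Choose any puncture $p \in \beta_+$ and let $\gamma = \beta(\ell)$ be the lasso curve for a short lasso $\ell$ from $\beta$ to $p$; then $\gamma \subset \beta_+$, $\gamma$ is not isotopic to $\beta$, and picking $\eta \in \alpha_+ \cap \beta_+ \cap \gamma_+$ close to $e_+$, the key identity yields $F(\alpha,\gamma) = |1 - F(\alpha,\beta)| = F(\alpha,\beta) - 1 < F(\alpha,\beta)$, contradicting the hypothesis. The flux-left analogue is symmetric.

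The main obstacle is keeping the signs straight in the key identity and identifying it as the real engine behind all six statements; once that is done, each assertion reduces to a short manipulation. The only place where additional geometric input beyond flux additivity is required is the converse of (6), where the lasso construction is used to produce a $\gamma$ with flux decremented by exactly one.
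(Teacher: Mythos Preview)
Your proposal is correct and follows essentially the same approach as the paper. The only difference is presentational: you front-load the sign-preserving version of Lemma~\ref{lem:reference curve independence} as a single ``key identity'' and then reuse it, whereas the paper re-derives the sign transfer ad hoc in each part via Lemma~\ref{lem:flux additivity}; the underlying computations (including the lasso construction for the converse of~(6)) are identical.
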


\begin{proof}
    (1) For any separating curve $\eta$ in $\alpha_+ \cap \beta_+ = \beta_+$,
    \[
        F_0(\alpha,\eta) = 
        F_0(\alpha,\beta) + F_0(\beta,\eta) \ge
        F_0(\beta,\eta).
    \]
    Hence, $\beta$ is flux-right of $\alpha$, and similarly, $\alpha$ is flux-left of $\beta$.

    (2) Suppose $\beta$ is not flux-left of $\alpha$, and let $\gamma$ be a separating curve in $\alpha_- \cap \beta_-$ such that $F_0(\alpha,\gamma) < F_0(\beta,\gamma)$. Let $\eta$ be any separating curve in $\alpha_+ \cap \beta_+$. Then
    \[
        F_0(\gamma,\alpha) + F_0(\alpha,\eta) =
        F_0(\gamma,\eta) =
        F_0(\gamma,\beta) + F_0(\beta,\eta).
    \]
    Since $F_0(\alpha,\gamma) < F_0(\beta,\gamma)$, it follows that $F_0(\alpha,\eta) > F_0(\beta,\eta)$. Hence, $\beta$ is flux-right of $\alpha$.

    (3) Suppose $\beta$ is flux-right of $\alpha$, and let $\eta$ be a separating curve in $\alpha_+ \cap \beta_+$. Then $F_0(\alpha,\eta) \ge F_0(\beta,\eta)$. Let $\gamma$ be any separating curve in $\alpha_- \cap \beta_-$. Then
    \[
        F_0(\gamma,\alpha) + F_0(\alpha,\eta) =
        F_0(\gamma,\eta) =
        F_0(\gamma,\beta) + F_0(\beta,\eta)
    \]
    Since $F_0(\alpha,\eta) \ge F_0(\beta,\eta)$, it follows that $F_0(\gamma,\alpha) \le F_0(\gamma,\beta)$. Hence, $\alpha$ is flux-left of $\beta$. The reverse implication is proved similarly.

    (4) Let $\eta$ be any separating curve in $\alpha_+ \cap \gamma_+$, and let $\eta'$ be a separating curve in $\beta_+ \cap \eta_+$. By the definition of flux-right, $F_0(\alpha,\eta') \ge F_0(\beta,\eta') \ge F_0(\gamma,\eta')$. Then
    \[
        F_0(\alpha,\eta) =
        F_0(\alpha,\eta') - F_0(\eta,\eta') \ge
        F_0(\gamma,\eta') - F_0(\eta,\eta') =
        F_0(\gamma,\eta).
    \]
    Hence, $\gamma$ is flux-right of $\alpha$.

    (5) Suppose $F(\alpha,\beta) = 0$, and let $\eta$ be any separating curve in $\alpha_+ \cap \beta_+$. Then 
    \[
        0 = F(\alpha,\beta) = \abs{F_0(\alpha,\eta) - F_0(\beta,\eta)}.
    \]
    Hence, $F_0(\alpha,\eta) = F_0(\beta,\eta)$. So $\beta$ is flux-right of $\alpha$, and similarly, $\beta$ is shown to be flux-left of $\alpha$.
    
    Conversely, suppose that $\beta$ is both flux-right and flux-left of $\alpha$. By statement (3), $\alpha$ is flux-right of $\beta$. Let $\eta$ be a separating curve in $\alpha_+ \cap \beta_+$. Then $F_0(\alpha,\eta) \ge F_0(\beta,\eta)$ and $F_0(\beta,\eta) \ge F_0(\alpha,\eta)$. Hence, 
    \[
        F(\alpha,\beta) = 
        \abs{F_0(\alpha,\eta) - F_0(\beta,\eta)} = 
        0.
    \]
    
    (6) Suppose $\beta$ is flux-right of $\alpha$. Let $\gamma$ be any separating curve in $\beta_+$ not isotopic to $\beta$, and let $\eta$ be a separating curve in $\alpha_+ \cap \gamma_+$. Then $F_0(\alpha,\eta) \ge F_0(\beta,\eta)$. Since $\gamma$ is not isotopic to $\beta$, $F_0(\beta,\gamma) > 0$. So,
    \[
        F_0(\beta,\eta) = 
        F_0(\beta,\gamma) + F_0(\gamma,\eta) >
        F_0(\gamma,\eta).
    \]
    Hence, 
    \[
        F(\alpha,\gamma) =
        F_0(\alpha,\eta) - F_0(\gamma,\eta) >
        F_0(\alpha,\eta) - F_0(\beta,\eta) =
        F(\alpha,\beta).
    \]
    
    Conversely, suppose there exists a separating curve $\eta$ in $\alpha_+ \cap \beta_+$ such that $F_0(\alpha,\eta) < F_0(\beta,\eta)$. Then $[\beta,\eta]$ contains at least one puncture $p$. Let $\ell$ be an arc in $[\beta,\eta]$ from $\beta$ to $p$, and put $\gamma = \beta(\ell)$. Then $\gamma$ is a separating curve in $\beta_+$ not isotopic to $\beta$, and $F_0(\beta,\eta) = F_0(\gamma,\eta) + 1$. Since $F_0(\alpha,\eta) < F_0(\beta,\eta)$,
    \[
        F(\alpha,\gamma) =
        \abs{F_0(\alpha,\eta) - F_0(\gamma,\eta)} =
        \abs{F_0(\alpha,\eta) - F_0(\beta,\eta) + 1} =
        F(\alpha,\beta) - 1.
    \]
    
    The second statement is proved similarly.
\end{proof}

\subsection{Main theorem (special case)}

Now we are ready to prove the main theorem in the case of the bi-infinite flute.

\begin{thm}\label{thm:flute case}
The mapping class group of the bi-infinite flute is one-ended.
\end{thm}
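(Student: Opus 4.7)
The plan is to use Lemma \ref{lem:one-ended criterion}. By Theorem \ref{thm:mapping class group is quasi-isometric to its translatable curve graph} and Proposition \ref{prop:number of ends is a qi invariant}, showing $\Map(\Sigma)$ is one-ended reduces to showing $\Gamma = \cT\cC(\Sigma)$ has exactly one end. The graph $\Gamma$ is connected, transitive, and unbounded (witnessed by the straight chain $o, T(o), T^2(o), \ldots$), so Lemma \ref{lem:one-ended criterion} applies. I plan to verify it with $f(R) = R+1$: given $\alpha_1, \alpha_2 \in \Gamma$ with $d(\alpha_i, o) = R+1$, I will exhibit a path from $\alpha_1$ to $\alpha_2$ disjoint from $B(o, R)$.

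My approach exploits a clean combinatorial model of $\Gamma$ for the bi-infinite flute. Index the punctures of $\Sigma$ by $\bZ$. Because $\Sigma$ is planar, each separating curve $\alpha$ is determined up to isotopy by the set $P(\alpha) \subset \bZ$ of punctures on its right side, and two such curves $\alpha, \beta$ are adjacent in $\Gamma$ precisely when $|P(\alpha) \triangle P(\beta)| = 1$ (since the only subsurface in $\cS$ is a once-punctured annulus). To each $\alpha$ I associate its \emph{signature} $\sigma(\alpha) := P(\alpha) \triangle P(o)$, which is a finite subset of $\bZ$. A single edge in $\Gamma$ changes $|\sigma|$ by exactly $\pm 1$, so any path of length $L$ can change $|\sigma|$ by at most $L$; this gives $d(\alpha, o) \ge |\sigma(\alpha)|$. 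Conversely, enumerating $\sigma(\alpha) = \{p_1, \ldots, p_k\}$ and lassoing the $p_i$ one at a time yields a length-$k$ path from $o$ to $\alpha$, so $d(\alpha, o) = |\sigma(\alpha)|$. In particular $|\sigma(\alpha_i)| = R+1$.

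The connecting path is then built in two phases. In Phase~1, I lasso the punctures in $\sigma(\alpha_2) \setminus \sigma(\alpha_1)$ one at a time, transforming $\alpha_1$ into the unique curve $\mu$ with $\sigma(\mu) = \sigma(\alpha_1) \cup \sigma(\alpha_2)$; along this segment $|\sigma|$ only increases, so every intermediate vertex is at distance $\ge R+1 > R$ from $o$. In Phase~2, I lasso the punctures in $\sigma(\alpha_1) \setminus \sigma(\alpha_2)$ one at a time, transforming $\mu$ into $\alpha_2$; here $|\sigma|$ decreases monotonically from $|\sigma(\alpha_1) \cup \sigma(\alpha_2)|$ down to $|\sigma(\alpha_2)| = R+1$, so the path again stays outside $B(o, R)$. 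Concatenating gives the desired path in $\Gamma \setminus B(o, R)$, verifying Lemma \ref{lem:one-ended criterion}.

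The only subtle point is the identity $d(\alpha, o) = |\sigma(\alpha)|$. The flux bound of Lemma \ref{lem:flux is a lower bound} alone is insufficient here, because a curve whose signature is balanced on both sides of $o$ has zero flux from $o$ despite nonzero graph distance. The stronger identity relies on the ``each edge in $\Gamma$ flips exactly one puncture" observation, which I expect will be the main technical content of the proof. Beyond this I anticipate no serious obstacle, as $\Gamma$ in the flute case is essentially the Cayley graph of $\bigoplus_\bZ \bZ/2$ with its standard generators, for which the two-phase ``add then remove" strategy transparently shows that the complement of any ball is connected.
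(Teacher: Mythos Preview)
Your proposal is correct, and it takes a cleaner and more direct route than the paper's own argument for the flute case. The key observation you make---that for the bi-infinite flute the map $\alpha \mapsto \sigma(\alpha) = P(\alpha)\triangle P(o)$ is a graph isomorphism from $\Gamma$ onto the Cayley graph of $\bigoplus_{\bZ}\bZ/2$ with its standard generators, and hence $d(\alpha,o) = |\sigma(\alpha)|$ exactly---is stronger than what the paper establishes in Section~3. The paper only proves the flux inequality $d \ge F$ (and $d = F$ for disjoint curves), and then compensates by working with $f(R)=3R$, by first translating $\beta$ to some $\beta'$ disjoint from $\alpha$, taking a straight geodesic $\alpha \to \beta'$, and then iteratively lassoing distant punctures $D+2R$ times to push the entire geodesic outside $B(o,R)$, using Lemma~\ref{lem:lassoing a distant puncture moves away from the origin} to control the endpoints and flux to control the interior. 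Your two-phase ``add then remove'' path avoids all of this machinery.

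What the paper's approach buys is generality: its argument is designed to survive in Section~4, where $\Sigma$ is an arbitrary stable avenue surface and curves are no longer determined by which discrete-type ends lie on each side (there may be Cantor-type ends and genus), so that your equality $d = |\sigma|$ fails and only the inequality $d \ge H \ge F$ remains. The paper's iterative lassoing scheme and the weaker Hamming lower bound (Lemma~\ref{lem:Hamming distance is a lower bound}) are what carry over. Your argument, by contrast, is specific to the planar, puncture-only setting---but for the flute it is the right proof, and your identification of the ``flux is not enough'' issue is exactly why the paper introduces the Hamming distance later.
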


\begin{proof}
By Theorem \ref{thm:mapping class group is quasi-isometric to its translatable curve graph}, it suffices to show that the translatable curve graph $\Gamma$ is one-ended. Fix a vertex $o$ of $\Gamma$ and let $R>0$ be any integer. To prove that $\Gamma$ is one-ended, it suffices by Lemma \ref{lem:one-ended criterion} to show that for any $\alpha,\beta \in \Gamma$ with $d(\alpha,o) = d(\beta,o) = 3R$, there exists a path from $\alpha$ to $\beta$ which is disjoint from $B = B(o,R)$. 

Since $\Map(\Sigma)$ acts transitively on $\Gamma$, we may assume without loss of generality that $\beta$ is a boundary curve of some copy of $S$ in $\Sigma = S^{\natural \bZ}$. By Lemma \ref{lem:flux-right facts} (2), $\beta$ is either flux-right or flux-left of $o$. Assume without loss of generality that $\beta$ is flux-right of $o$. Let $h$ be the translation on $\Sigma$ such that $h(\beta)$ lies on the right side of $\beta$ and $[\beta,h(\beta)]$ is homeomorphic to $S$. Then Lemma \ref{lem:flux-right facts} (6) implies $F(o,h(\beta)) > F(o,\beta)$. Moreover, $h(\beta)$ is flux-right of $\beta$ by Lemma \ref{lem:flux-right facts} (1), and so Lemma \ref{lem:flux-right facts} (4) implies that $h(\beta)$ is flux-right of $o$. It follows from induction that $F(o,h^i(\beta)) \ge F(o,\beta) + i$ for all $i \in \bN$. Since $h$ is a translation, we may fix a large enough $k$ so that $h^k(\beta)$ lies on the right side of $\alpha$. Put $\beta' = h^k(\beta)$. Then $(\beta,h(\beta),\dots,h^k(\beta))$ is a path from $\beta$ to $\beta'$. If this path has length at most $R$, then it must be disjoint from $B$ because $d(\beta,o) = 3R$. Otherwise if the length is greater than $R$, then the first $R+1$ vertices on the path lie outside of $B$ for the same reason as before, and the remaining vertices also lie outside of $B$ because for all $R < i \le k$,
\[
    d(o,h^i(\beta)) \ge
    F(o,h^i(\beta)) \ge
    F(o,\beta) + i >
    R.
\]
So $\beta$ and $\beta'$ are connected by a path which is disjoint from $B$. If instead $\beta$ is flux-left of $o$, then a similar argument gives a path $(\beta,h^{-1}(\beta),\dots,h^{-k}(\beta))$ from $\beta$ to some $h^{-k}(\beta)$ which is disjoint from $\alpha$. It now remains to construct a path from $\alpha$ to $\beta'$ which disjoint from $B$. 

Since $\beta'$ lies on the right side of $\alpha$, Lemma \ref{lem:flux=distance (flute)} gives a straight path $(\gamma_0=\alpha,\gamma_1,\dots,\gamma_n=\beta')$ from $\alpha$ to $\beta'$. We now describe a procedure to be iterated several times. Fix a carrier $K$ of $o$ and the $\gamma_i$. Let $\ell$ be an arc from $K_-$ to a puncture $p \in K_+$ such that $\ell$ intersects each $\gamma_i$ exactly once, and construct the lasso path $\gamma_0(\ell),\dots,\gamma_n(\ell)$. See Figure \ref{fig:lasso-seq} for an example. As lasso curves, each $\gamma_i(\ell_i)$ is adjacent to $\gamma_i$ in $\Gamma$. In particular, $\alpha$ and $\alpha(\ell)$ are connected by an edge, and so are $\beta'$ and $\beta'(\ell)$. Since $K$ is a carrier containing $o,\alpha,\beta'$, Lemma \ref{lem:lassoing a distant puncture moves away from the origin} implies
\[
    d(\alpha(\ell),o) \ge d(\alpha,o) > R
    \qquad \text{and} \qquad
    d(\beta'(\ell),o) \ge d(\beta',o) > R.
\]
So $\alpha(\ell),\beta'(\ell) \notin B$. Since the lasso path is again a straight path, the procedure described above may be iterated indefinitely. Indeed, after fixing a carrier $K'$ of $o$ and the $\gamma_i(\ell)$, a puncture $p' \in K'_+$, and an arc from $K'_-$ to $p'$ which intersects each $\gamma_i(\ell)$ exactly once, a new lasso path may again be constructed, the endpoints of which are not in $B$ and are adjacent to the endpoints of the previous path. Repeat the procedure for a total of $D+2R$ iterations, where $D = \max_i F(o,\gamma_i)$, and denote by $(\gamma_0',\dots,\gamma_n')$ the final path obtained after the last iteration. Now, observe that this process also yields a path from $\alpha$ to $\gamma_0'$ and a path from $\beta'$ to $\gamma_n'$ consisting of the endpoints of the lasso paths constructed in each iteration. Applying Lemma \ref{lem:lassoing a distant puncture moves away from the origin} at each iteration shows that both of these paths are disjoint from $B$. Once it is shown that $(\gamma_0',\dots,\gamma_n')$ is disjoint from $B$, the concatenation of the three paths then gives a path from $\alpha$ to $\beta'$ which is disjoint from $B$.

\begin{figure}
    \centering
    \includegraphics[width=.8\linewidth]{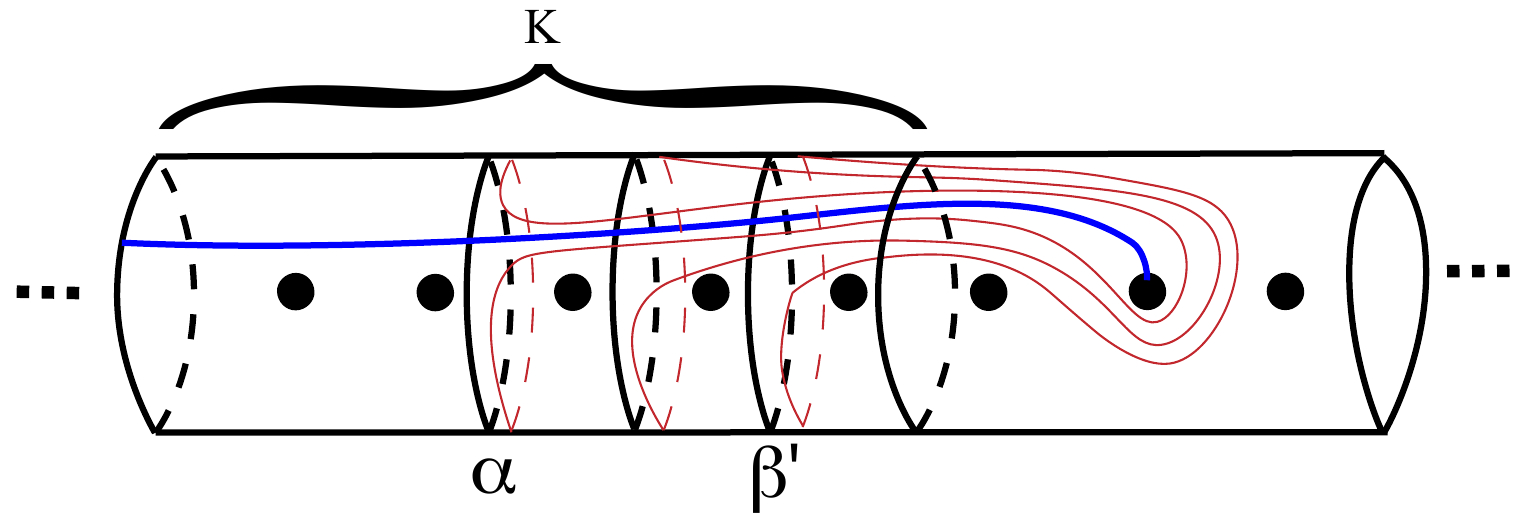}
    \caption{The lasso path constructed from a geodesic between $\alpha$ and $\beta'$ and an arc $\ell$ ending at a puncture outside a carrier $K$.}
    \label{fig:lasso-seq}
\end{figure}

Let $i \in \s{0,\dots,n}$. By construction, $\gamma_i$ and $\gamma_i'$ are disjoint and $[\gamma_i,\gamma'_i]$ is an annulus with $D+2R$ punctures. In particular, $F_0(\gamma_i,\gamma'_i) = D+2R$. By definition, $D \ge F(o,\gamma_i)$. Then by Lemma \ref{lem:flux has triangle inequality},
\[
    F(o,\gamma'_i) \ge
    F(\gamma_i,\gamma'_i) - F(o,\gamma_i) \ge
    (D+2R) - D =
    2R.
\]
So $d(o,\gamma'_i) \ge F(o,\gamma'_i) \ge 2R$, which implies $\gamma'_i \notin B$. Thus $(\gamma'_0,\dots,\gamma'_n)$ is disjoint from $B$.
\end{proof}

\section{Stable avenue surfaces}\label{sec4}

Now we consider the general setting when $\Sigma$ is a stable avenue surface. Again, we denote by $\Gamma$ the translatable curve graph $\cT\cC(\Sigma)$ of $\Sigma$. First we need to establish some notation. If a simple closed curve $\eta$ in $\Sigma$ does not separate $e_+$ and $e_-$, but $\Sigma \setminus \eta$ does have two components, then one of the components must have neither $e_+$ nor $e_-$ as an end. The union of this component with $\eta$ is a subsurface of $\Sigma$ with boundary $\eta$, and it is denoted by $C_\eta$. If a connected surface $T$ has at least one boundary circle, then denote by $\widehat T$ the surface obtained from $T$ by capping a disk onto a boundary circle. 

\subsection{General lassoing and flux}\label{sec:lasso and flux}

Let $\alpha$ be a separating curve in $\Sigma = S^{\natural \bZ}$, and let $V$ be a clopen subset of ends contained either in $\End(\alpha_+)$ or $\End(\alpha_-)$. By the proof of \cite[Lemma 4.6]{Schaffer-Cohen}, there exists a simple closed curve $\eta$ disjoint from $\alpha$ which bounds $V$. In particular, $\eta$ does not separate $e_+$ and $e_-$, and $\End(C_\eta) = V$. Moreover, if $S$ has finite positive genus, then for all $g \ge 0$, $\eta$ may be chosen so that $C_\eta$ has genus $g$. Let $\lambda$ be an arc which has one endpoint on $\eta$, is disjoint from $\eta$ otherwise, and intersects $\alpha$. Denote by $\lambda'$ the subarc of $\lambda$ obtained by traveling along $\lambda$ from its endpoint on $\eta$ until the first intersection point with $\alpha$. In other words, $\lambda'$ is the component of $\lambda \setminus \alpha$ containing the endpoint on $\eta$.

\begin{defn}
    Given $\alpha$, $\eta$, and $\lambda$ as above, the \textit{lasso curve} defined by $\alpha$, $\eta$, and $\lambda$, denoted by $\alpha(\eta,\lambda)$, is the simple closed curve obtained by tracing along $\alpha$, $\lambda$, and $\eta$. The pair $(\eta,\lambda)$ and the union $\eta \cup \lambda$ are both referred to as the \textit{lasso} of $\alpha(\eta,\lambda)$.
\end{defn}

Equivalently, $\alpha(\eta,\lambda)$ is the boundary curve of a small tubular neighborhood of $\alpha \cup \lambda' \cup \eta$ which is not isotopic to either $\alpha$ or $\eta$. Observe that by construction, the lasso curve $\alpha(\eta,\lambda)$ is a separating curve disjoint from $\alpha$, and the end space of $[\alpha,\alpha(\eta,\lambda)]$ is equal to $\End(C_\eta)$. Then the following lemma follows from the definition of the translatable curve graph.

\begin{lem}\label{lem:neighbor lasso curve}
    Let $\alpha$ be a separating curve and let $\alpha(\eta,\lambda)$ be a lasso curve defined by $\alpha$ and some $\eta$ and $\lambda$. Then $\alpha$ and $\alpha(\eta,\lambda)$ are neighbors in $\Gamma$ if either of the following conditions is met.
    \begin{itemize}
        \item  $C_\eta$ is homeomorphic to $\widehat T_i$ for some $1 \le i \le n$,
        \item $C_\eta$ is homeomorphic to $\widehat T_{n+1}$.
    \end{itemize}
\end{lem}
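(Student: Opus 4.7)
The plan is to reduce the claim, via the definition of $\Gamma$, to showing that $[\alpha,\alpha(\eta,\lambda)]$ is homeomorphic to the appropriate model surface from $\cS$, and then to verify the homeomorphism using Richards' classification of surfaces with finitely many boundary circles: two such surfaces are homeomorphic iff they agree on the number of boundary circles, the genus, and the end-space pair $(\End,\End^g)$.

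The key geometric step is the decomposition $[\alpha,\alpha(\eta,\lambda)] = N \cup_\eta C_\eta$, where $N$ is the unique (up to isotopy) compact subsurface with boundary $\alpha \cup \eta \cup \alpha(\eta,\lambda)$ containing the arc $\lambda'$ in its interior. Since $\alpha(\eta,\lambda)$ is, by construction, the boundary curve, other than those isotopic to $\alpha$ and $\eta$, of a small tubular neighborhood of the graph $\alpha \cup \lambda' \cup \eta$, the subsurface $N$ is isotopic to this tubular neighborhood. As a CW complex, $\alpha \cup \lambda' \cup \eta$ has two $0$-cells (the endpoints of $\lambda'$) and three $1$-cells, so $\chi(N) = \chi(\alpha \cup \lambda' \cup \eta) = -1$; combined with its three boundary circles, the formula $\chi = 2 - 2g - b$ forces $N$ to have genus $0$ and hence be a pair of pants. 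In particular, $[\alpha,\alpha(\eta,\lambda)]$ has exactly two boundary circles, namely $\alpha$ and $\alpha(\eta,\lambda)$.

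Since $N$ is a compact pair of pants, gluing it to $C_\eta$ along $\eta$ changes neither the end space, nor the genus, nor the set of ends accumulated by genus of $C_\eta$. So the triple (genus, $\End$, $\End^g$) of $[\alpha,\alpha(\eta,\lambda)]$ equals that of $C_\eta$. Under either hypothesis of the lemma, $C_\eta \cong \widehat{T_i}$ (respectively $\widehat{T_{N+1}}$), and capping a boundary disk preserves genus and the end-space pair, so $C_\eta$ shares these invariants with $T_i$ (respectively $T_{N+1}$). Combined with the matching count of two boundary circles, Richards' classification yields $[\alpha,\alpha(\eta,\lambda)] \cong T_i$ (respectively $T_{N+1}$), proving that $\alpha$ and $\alpha(\eta,\lambda)$ are adjacent in $\Gamma$. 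The only delicate point is the identification of $N$ as a pair of pants; this depends on the lasso data being arranged so that $\lambda'$ meets each of $\alpha$ and $\eta$ in a single endpoint, which is built into the construction of a lasso.
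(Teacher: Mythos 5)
Your proof is correct and supplies exactly the verification the paper leaves implicit: the paper states this lemma without proof, merely observing beforehand that $\alpha(\eta,\lambda)$ is a separating curve disjoint from $\alpha$ with $\End([\alpha,\alpha(\eta,\lambda)]) = \End(C_\eta)$ and declaring that the claim then follows from the definition of $\Gamma$. Your decomposition $[\alpha,\alpha(\eta,\lambda)] \cong N \cup_\eta C_\eta$ with $N$ a pair of pants, together with the Euler-characteristic count and the classification of surfaces, is a correct and complete justification of that observation, so this is essentially the paper's (omitted) argument written out in full.
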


Let $(\alpha_0,\dots,\alpha_n)$ be a straight path in $\Gamma$. Let $\eta$ be a curve contained in either ${\alpha_n}_+$ or ${\alpha_0}_-$ which bounds a clopen subset of ends, and let $\lambda$ be an arc which has one endpoint on $\eta$, is disjoint from $\eta$ otherwise, and intersects each $\alpha_i$ exactly once. Then by the same construction and arguments used in the previous section, the \textit{lasso path} defined by the $\alpha_i$, $\eta$, and $\ell$ is the straight path $(\alpha_0(\eta,\lambda),\dots,\alpha_n(\eta,\lambda))$.

Next we extend the notion of flux for the more general surface $\Sigma$. For disjoint separating curves $\alpha$ and $\beta$, define $p_0(\alpha,\beta)$ to be the number of discrete-type ends of $[\alpha,\beta]$, and define 
\[
    g_0(\alpha,\beta) = 
    \begin{cases}
        \text{genus of } [\alpha,\beta], &\text{if } S \text{ has finite genus}.
        \\
        0, &\text{if } S \text{ has infinite genus}.
    \end{cases}
\]
Then put $F_0(\alpha,\beta) = p_0(\alpha,\beta) + g_0(\alpha,\beta)$. Now remove the assumption that $\alpha$ and $\beta$ are disjoint. For separating curves $\gamma$ in $\alpha_+ \cap \beta_+$ or $\alpha_- \cap \beta_-$, define $p(\alpha,\beta;\gamma) = \abs{p_0(\alpha,\gamma) - p_0(\beta,\gamma)}$ and $g(\alpha,\beta;\gamma) = \abs{g_0(\alpha,\gamma) - g_0(\beta,\gamma)}$. The statements and proofs of Lemma \ref{lem:flux additivity} and Lemma \ref{lem:reference curve independence} all hold for $p_0$ and $g_0$. So define $p(\alpha,\beta) = p(\alpha,\beta;\gamma)$ and $g(\alpha,\beta) = g(\alpha,\beta;\gamma)$ for some (any) choice of separating curve $\gamma$ in $\alpha_+ \cap \beta_+$ or $\alpha_- \cap \beta_-$. The \textit{flux} between $\alpha$ and $\beta$, denoted by $F(\alpha,\beta)$, is then defined by
\[
    F(\alpha,\beta) = 
    p(\alpha,\beta) + g(\alpha,\beta).
\]
Again, whenever $\alpha$ and $\beta$ are disjoint we have $p(\alpha,\beta) = p_0(\alpha,\beta)$ and $g(\alpha,\beta) = g_0(\alpha,\beta)$, and therefore, $F(\alpha,\beta) = F_0(\alpha,\beta)$. 

\begin{lem}\label{lem:flux of neighbors is 1}
    For all separating curves $\alpha$ and $\beta$, if $d(\alpha,\beta) = 1$ then $F(\alpha,\beta)=1$.
\end{lem}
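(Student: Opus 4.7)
The plan is to unwind the definition of adjacency in $\Gamma$ and then read off the flux from the homeomorphism type of the bounded subsurface $[\alpha,\beta]$. Since $d(\alpha,\beta)=1$ means $\alpha$ and $\beta$ have disjoint representatives, we are in the disjoint case where $F(\alpha,\beta) = F_0(\alpha,\beta) = p_0(\alpha,\beta) + g_0(\alpha,\beta)$. So the task reduces to verifying that each subsurface $T \in \cS$ contributes $p_0 + g_0 = 1$ when $[\alpha,\beta] \cong T$.

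I would split into the cases dictated by Section \ref{sec:translatable curve graph}. For $T = T_i$ with $1 \le i \le N$: by construction $\End(T_i) \cong V_{f_i} \sqcup \bigsqcup_{j=1}^M V_{c_j}$. Exactly one equivalence class $E(f_i)$ of ends in $T_i$ is discrete-type (corresponding to $V_{f_i}$, which has a unique $f_i$-type end in the stable neighborhood), while the Cantor-set factors $V_{c_j}$ contribute only non-discrete-type ends. Hence $p_0(\alpha,\beta) = 1$. For the genus, either $T_i$ has genus $0$ (so $g_0=0$ directly), or $T_i$ has infinite genus, which by construction only happens when some $f_i$ or $c_j$ is accumulated by genus; that forces $\Sigma$, and thus $S$, to have infinite genus, whereupon $g_0(\alpha,\beta)=0$ by definition. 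In both subcases $F_0(\alpha,\beta) = 1$.

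For $T = T_{N+1}$ (defined only when $S$ has finite positive genus): by construction $\End(T_{N+1}) \cong \bigsqcup_{j=1}^M V_{c_j}$, which contains no discrete-type ends, so $p_0(\alpha,\beta)=0$. Since $S$ has finite genus and $T_{N+1}$ has genus $1$, we get $g_0(\alpha,\beta) = 1$. Hence again $F_0(\alpha,\beta) = 1$. Finally, the degenerate choice $\cS = \{S\}$ (from Section \ref{sec:translatable curve graph}) is excluded: the excerpt notes that for a stable avenue surface $\Map(\Sigma)$ is not CB, so when $N=0$ the surface $S$ must have finite positive genus, putting us back in the $T_{N+1}$ case.

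I do not expect a real obstacle here; the only point requiring a bit of care is the genus bookkeeping in the case $T_i$ has infinite genus, where the convention $g_0 = 0$ when $S$ has infinite genus is exactly what keeps $F_0$ from jumping to infinity. Once those conventions are unwound the conclusion is immediate from the case analysis, and the lemma follows.
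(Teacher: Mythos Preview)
Your proof is correct and follows the same case analysis as the paper: reduce to $F_0 = p_0 + g_0$ via disjointness, then check each $T \in \cS$ contributes exactly $1$. You supply slightly more detail than the paper (explicitly arguing why infinite genus of $T_i$ forces $g_0 = 0$ via the convention on $S$, and ruling out the degenerate $\cS = \{S\}$ case), but the approach is essentially identical.
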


\begin{proof}
    By definition of the translatable curve graph $\Gamma$, $d(\alpha,\beta)=1$ means that $\alpha$ and $\beta$ are disjoint and $[\alpha,\beta]$ is homeomorphic to some $T \in \cS$. If $T=T_i$ for some $i=1,\dots,n$, then $T$ has exactly one discrete-type end and its genus is either 0 or infinite. So $p_0(\alpha,\beta) = 1$ and $g_0(\alpha,\beta) = 0$. If $T=T_{n+1}$, then $T$ has no discrete-type ends and its genus is 1. So $p_0(\alpha,\beta) = 0$ and $g_0(\alpha,\beta) = 1$. In either case, $F(\alpha,\beta) = F_0(\alpha,\beta) = 1$.
\end{proof}

Then the same arguments in the previous section may be used to show that $F$ is a pseudometric on $\Gamma$ which bounds $d$ from below. Also, it is still true in this setting that straight paths are geodesic. On the other hand, Lemma \ref{lem:flux=distance (flute)} does not immediately extend to the more general setting. Indeed, even if $\alpha$ and $\beta$ are disjoint, then it is possible that $d(\alpha,\beta) > F_0(\alpha,\beta)$. This is due to the subtle definition of $\Gamma$. For example, suppose $\Sigma = S^{\natural \bZ}$ and $S$ has a maximal end which is not of discrete type. If $[\alpha,\beta]$ has genus 1 and no ends, then $F_0(\alpha,\beta) = 1$, but $d(\alpha,\beta) \ne 1$ because $[\alpha,\beta]$ is not homeomorphic to $T_{N+1}$. However, we give a sufficient condition on $[\alpha,\beta]$ to guarantee that $d(\alpha,\beta) = F(\alpha,\beta)$.

\subsection{Full subsurfaces}

Recall from Section \ref{sec:translatable curve graph} the set $\cR= \s{f_1,\dots,f_N,c_1,\dots,c_M}$ of representatives of the equivalence classes of maximal ends of $S$ used to define the translatable curve graph of $\Sigma = S^{\natural \bZ}$.

\begin{defn}
    A subsurface $T \subset \Sigma$ is \textit{full} if $\End(T)$ contains an element of $E(f_i)$ for each $i=1,\dots,N$ and an element of $E(c_j)$ for each $j=1,\dots,M$. If $N=0$, then $S$ has finite positive genus and we further require $T$ to have positive genus.
\end{defn}

Note that the condition above is readily satisfied. For example, suppose $\alpha$ and $\beta$ are separating curves such that $\beta$ lies on the right side of $\alpha$. Even if $[\alpha,\beta]$ is not itself full, $[\alpha,h(\beta)]$ must be full because it contains a subsurface homeomorphic to $S$. The proof of the next lemma resembles the proof that $\Gamma$ is connected \cite[Lemma 4.7]{Schaffer-Cohen}.

\begin{lem}\label{lem:flux is equal to distance}
    Let $\alpha$ and $\beta$ be disjoint separating curves. If $[\alpha,\beta]$ is full, then there exists a straight path from $\alpha$ to $\beta$ of length $F(\alpha,\beta)$. In particular, $d(\alpha,\beta) = F(\alpha,\beta)$.
\end{lem}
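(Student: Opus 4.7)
The plan is to induct on $F(\alpha,\beta)$, using the lassoing construction to peel off a single subsurface homeomorphic to some $T \in \cS$ from the $\alpha$-side of $[\alpha,\beta]$ at each step. For the base case $F(\alpha,\beta) = 1$, fullness forces either $p_0(\alpha,\beta) = 1$ with $N = 1$ (so $[\alpha,\beta]$ carries one $f_1$-end together with the required $c_j$-type ends, hence $[\alpha,\beta] \cong T_1$ by Richards' classification of surfaces) or $p_0(\alpha,\beta) = 0$ and $g_0(\alpha,\beta) = 1$ with $N = 0$ and $S$ of finite positive genus (so $[\alpha,\beta] \cong T_{N+1}$). Either way, $\alpha$ and $\beta$ are neighbors in $\Gamma$ and the one-edge path is straight of length $F(\alpha,\beta) = 1$.

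For the inductive step $F(\alpha,\beta) = k \geq 2$, the goal is to find a separating curve $\gamma$ in the interior of $[\alpha,\beta]$ so that $(\alpha,\gamma,\beta)$ is a straight sequence, $[\alpha,\gamma] \cong T$ for some $T \in \cS$, and $[\gamma,\beta]$ is full with $F(\gamma,\beta) = k-1$. The curve $\gamma$ is built by lassoing: choose a clopen subset $V \subset \End([\alpha,\beta])$ matching $\End(\widehat T)$ for some $T \in \cS$, use the proof of \cite[Lemma 4.6]{Schaffer-Cohen} to find a curve $\eta$ in the interior bounding a $\widehat T$-shaped $C_\eta$ with $\End(C_\eta) = V$, take an arc $\lambda$ from $\eta$ crossing $\alpha$ exactly once, and set $\gamma = \alpha(\eta,\lambda)$. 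By Lemma \ref{lem:neighbor lasso curve}, $\alpha$ and $\gamma$ are adjacent in $\Gamma$ and $[\alpha,\gamma] \cong T$, and additivity of flux (Lemma \ref{lem:flux additivity}) gives $F(\gamma,\beta) = k-1$.

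The type $T$ must be chosen to preserve fullness of $[\gamma,\beta]$: if some type $i$ has at least two $f_i$-ends in $[\alpha,\beta]$, peel off a $T_i$-piece using one of the excess $f_i$-ends; otherwise peel off a $T_{N+1}$-piece, which requires $g_0(\alpha,\beta) \geq 1$ and $T_{N+1} \in \cS$. The Cantor-set portions of $V$ are carved out as proper $V_{c_j}$-shaped clopen subsets of each Cantor family of $c_j$-ends in $[\alpha,\beta]$, which is possible by stability, so at least one $c_j$-end remains on the $\beta$-side for each $j$. The inductive hypothesis then yields a straight path of length $k-1$ from $\gamma$ to $\beta$, and prepending the edge $\alpha\gamma$ finishes the construction; the distance equality $d(\alpha,\beta) = F(\alpha,\beta)$ follows from Lemma \ref{lem:straight paths are geodesic}.

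The main obstacle is the edge case where every $f_i$-type has exactly one representative in $[\alpha,\beta]$ (so $p_0 = N$) and $g_0 = 0$, forcing $F = N \geq 2$ while no single peel preserves fullness. I would handle this case by abandoning the incremental approach and directly partitioning $\End([\alpha,\beta])$ into $N$ pairwise disjoint clopen pieces, the $i$-th piece containing the unique $f_i$-end together with a $V_{c_k}$-shaped clopen subset within the Cantor family of $c_k$-ends for each $k$; fullness supplies the $f_i$-ends and stability of the Cantor families permits the required number of pairwise disjoint $V_{c_k}$-copies to be extracted. Realizing this partition by iterated lassoing produces a straight path $(\alpha,\gamma_1,\dots,\gamma_{N-1},\beta)$ with each $[\gamma_{i-1},\gamma_i]$ homeomorphic to some $T_j$, and Lemma \ref{lem:straight paths are geodesic} again gives $d(\alpha,\beta) = N = F(\alpha,\beta)$.
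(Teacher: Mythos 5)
Your proof is correct, and its core coincides with the paper's: the direct-partition argument you reserve for the awkward case $p_0(\alpha,\beta)=N$, $g_0(\alpha,\beta)=0$ is, in fact, the paper's entire proof, applied uniformly. The paper uses \cite[Lemma 4.5]{Schaffer-Cohen} to write $\End([\alpha,\beta])$ as a disjoint union of stable neighborhoods of maximal ends, splits each Cantor family into $F=p+g$ homeomorphic clopen pieces, groups everything into sets $Z_1,\dots,Z_F$ with each $Z_j$ homeomorphic to $\End(T)$ for some $T\in\cS$ (the first $p$ absorbing one discrete-type stable neighborhood each, the last $g$ carrying genus $1$ instead), and realizes the partition by $F$ successive lassos; fullness is invoked exactly once, to guarantee that every representative in $\cR$ contributes a piece so that each $Z_j$ can be completed to the end space of an element of $\cS$. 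This makes your outer induction and the three-way case analysis (excess $f_i$-end, spare genus, neither) unnecessary: the bookkeeping you need to keep $[\gamma,\beta]$ full after each peel is precisely the complication the one-shot partition avoids. Both routes rest on the same two pillars --- stability-based splitting of the end space and iterated lassoing along an arc meeting each curve once to produce a straight path of $\Gamma$-neighbors --- followed by Lemma \ref{lem:straight paths are geodesic}, so I would count this as the same proof wrapped in a more elaborate, and avoidable, inductive structure.
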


\begin{proof}
    Assume without loss of generality that $\beta$ lies on the right side of $\alpha$. By \cite[Lemma 4.5]{Schaffer-Cohen}, the end space of $[\alpha,\beta]$ can be expressed as a disjoint union $\bigsqcup_{i=1}^k U_i$ where each $U_i$ is a stable neighborhood of a maximal end equivalent to one of the representatives in $\cR= \s{f_1,\dots,f_N,c_1,\dots,c_M}$ chosen above. Moreover, it follows from stability and maximality that each $U_i$ intersects $E(r)$ for exactly one $r \in R$. On the other hand, for each representative $r \in R$, at least one of the $U_i$ intersects $E(r)$ because $[\alpha,\beta]$ is full. In particular, $k \ge N+M$. Put $p = p(\alpha,\beta)$, and note that $N \le p \le k$. Denote by $\s{V_i}_{i=1}^p$ the set of $U_i$ which are stable neighborhoods of discrete-type ends. The remaining $U_i$ (if any exist) are stable neighborhoods of Cantor-type ends. Note that if $U_i$ and $U_{i'}$ both intersect some $E(c_j)$, then the disjoint union $U_i \sqcup U_{i'}$ is again a stable neighborhood which intersects $E(c_j)$. So the remaining $U_i$ may be combined to form a set $\s{W_i}_{i=1}^M$ where for each $i=1,\dots,M$, $W_i$ is a stable neighborhood of an end equivalent to $c_i$. In summary, we have
    \[
        \bigsqcup_{i=1}^k U_i = 
        \parens{\bigsqcup_{i=1}^\ell V_i} \sqcup \parens{\bigsqcup_{i=1}^M W_i}.
    \]
    Put $F = F_0(\alpha,\beta)$ and $g = g_0(\alpha,\beta)$. Since $[\alpha,\beta]$ is full, $F = p + g > 0$. For each $1 \le i \le M$, $E(c_i) \cap W_i$ is a Cantor set. So $W_i$ can be expressed as $\bigsqcup_{j=1}^F W_{i,j}$ where each $W_{i,j}$ is a stable neighborhood of an end in $E(c_i)$, and is thus homeomorphic to $W_i$. For $1 \le j \le p$, put $Z_j = V_j \sqcup \bigsqcup_{i=1}^M W_{i,j}$, and for $p+1\le j \le F$, put $Z_j = \bigsqcup_{i=1}^M W_{i,j}$. Then the end space of $[\alpha,\beta]$ is equal to $\bigsqcup_{j=1}^F Z_j$ and each $Z_j$ is homeomorphic to $\End(T)$ for some $T \in \cS$.

    Put $\alpha_0=\alpha$. For $j=1,\dots,p$, inductively define $\alpha_j$ to be the lasso curve $\alpha_{j-1}(\eta_j,\lambda_j)$, where the lasso $\eta_j \cup \lambda_j$ is contained in $[\alpha_{j-1},\beta]$, the end space of $C_{\eta_j}$ is equal to $Z_j$, and 
    \[
        \genus(C_{\eta_j}) =
        \begin{cases}
            0 \text{ or } \infty, \quad &1 \le j \le p, \\
            1, \quad &p < j \le F.
        \end{cases}
    \]
    By construction, for $1 \le j \le p$, $[\alpha_{j-1},\alpha]$ is homeomorphic to $T_i \in \cS$ for some $i=1,\dots,n$, and for $p < j \le F$, $[\alpha_{j-1},\alpha_j]$ is homeomorphic to $T_{n+1}$. Hence, $\alpha_{j-1}$ and $\alpha_j$ are neighbors in $\Gamma$ for all $j = 1,\dots,F$. Moreover, each $\alpha_j$ lies on the right side of $\alpha_{j-1}$ by construction, and $\alpha_F$ is isotopic to $\beta$ because $[\alpha_F,\beta]$ is an annulus. Therefore, $(\alpha_0=\alpha,\dots,\alpha_F=\beta)$ is a straight path from $\alpha$ to $\beta$ of length $F(\alpha,\beta)$. Since straight paths are geodesic, $d(\alpha,\beta) = F(\alpha,\beta)$.
\end{proof}

In fact, if $[\alpha,\beta]$ is full, then every geodesic in $\Gamma$ from $\alpha$ to $\beta$ must be a straight path.

\begin{lem}\label{lem:straight geodesics}
    Let $\alpha$ and $\beta$ be disjoint separating curves such that $\beta$ lies on the right side of $\alpha$ and $[\alpha,\beta]$ is full. If $(\gamma_0=\alpha,\gamma_1,\dots,\gamma_n=\beta)$ is any geodesic path in $\Gamma$ between $\alpha$ and $\beta$, then for each $i=1,\dots,n$, $\gamma_i$ lies on the right side of $\gamma_{i-1}$. In particular, $\gamma_i$ lies in  $[\alpha,\beta]$ for all $i=0,\dots,n$.
\end{lem}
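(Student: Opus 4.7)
The plan is to introduce a reference separating curve $\eta$ that lies far to the right of every $\gamma_i$, and to track positions along the geodesic via the scalar $\phi(x) := F_0(x, \eta)$. The argument proceeds in three steps.

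First, I would show that $F(\alpha, \gamma_i) = i$ and $F(\gamma_i, \beta) = n - i$ for each $i$, where $n = d(\alpha,\beta)$. By Lemma~\ref{lem:flux is equal to distance}, $n = F(\alpha,\beta)$. Consecutive vertices in $\Gamma$ have flux $1$ by Lemma~\ref{lem:flux of neighbors is 1}, so the triangle inequality gives $F(\alpha,\gamma_i) \le i$ and $F(\gamma_i,\beta) \le n - i$. Combined with $F(\alpha,\gamma_i) + F(\gamma_i,\beta) \ge F(\alpha,\beta) = n$, both inequalities must be equalities.

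Second, I would choose $\eta$ to be a separating curve disjoint from $\{\alpha, \beta, \gamma_1, \dots, \gamma_{n-1}\}$ with all of these contained in $\eta_-$. Such an $\eta$ exists by translatability of $\Sigma$: the finitely many curves lie in a compact subsurface, and iterating the translation on $\alpha$ pushes $h^k(\alpha)$ into an arbitrarily small neighborhood of $e_+$. The containment $x \subset \eta_-$ for any of the listed $x$ forces $\eta \subset x_+$, so $\eta$ serves as a valid common reference curve and $F(x, y) = |\phi(x) - \phi(y)|$ for any two curves $x, y$ in the list. Additivity along the straight sequence $\alpha, \beta, \eta$ (the general-setting version of Lemma~\ref{lem:flux additivity}) yields $\phi(\alpha) = \phi(\beta) + n$.

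Third, I would deduce $\phi(\gamma_i) = \phi(\alpha) - i$ for $i \ge 1$. The constraint $|\phi(\alpha) - \phi(\gamma_i)| = i$ admits two signs, but the case $\phi(\gamma_i) = \phi(\alpha) + i$ would force $|\phi(\gamma_i) - \phi(\beta)| = n + i$, contradicting the fact that this quantity equals $n - i$. Hence $\phi$ strictly decreases along the geodesic. Since $\gamma_{i-1}$ and $\gamma_i$ are disjoint neighbors, one of them lies on the right side of the other. If $\gamma_i$ were on the left of $\gamma_{i-1}$, then $\gamma_i, \gamma_{i-1}, \eta$ would be a straight sequence, and additivity would give $\phi(\gamma_i) = \phi(\gamma_{i-1}) + 1$, contradicting the strict decrease. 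So each step is rightward. Iterated right-containment then nests every $\gamma_i$ inside $\alpha_+ \cup \alpha$ from the left and inside $\beta_- \cup \beta$ from the right, giving $\gamma_i \subset [\alpha, \beta]$.

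The main obstacle I expect is identifying the correct choice of $\eta$: this is precisely where translatability gets used, converting the triangle inequality for the flux pseudometric into the strict monotonicity of $\phi$ that then forces rightward steps. Once $\eta$ is fixed, the rest of the argument is additivity bookkeeping.
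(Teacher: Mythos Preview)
Your approach is sound, but there is a technical slip in Step~2. In the general setting of Section~4 the flux is $F(x,y) = p(x,y) + g(x,y)$, where each of $p$ and $g$ is a separate absolute value computed against the reference curve. Thus
\[
F(x,y) \;=\; \abs{p_0(x,\eta)-p_0(y,\eta)} + \abs{g_0(x,\eta)-g_0(y,\eta)} \;\ge\; \abs{\phi(x)-\phi(y)},
\]
with strict inequality possible whenever the $p$- and $g$-differences have opposite signs. So your claimed equality $F(x,y) = \abs{\phi(x)-\phi(y)}$ need not hold, and Step~3 as written (which uses $\abs{\phi(\alpha)-\phi(\gamma_i)} = i$ exactly to rule out one of two signs) does not go through verbatim.

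The fix is immediate. The inequality above yields $\abs{\phi(\alpha)-\phi(\gamma_i)} \le i$ and $\abs{\phi(\gamma_i)-\phi(\beta)} \le n-i$, while additivity of $F_0$ on the straight triple $\alpha,\beta,\eta$ gives $\phi(\alpha)-\phi(\beta)=n$. Setting $a = \phi(\alpha)-\phi(\gamma_i)$ and $b = \phi(\gamma_i)-\phi(\beta)$, one has $a+b=n$ with $a\le i$ and $b\le n-i$, forcing $a=i$ and $b=n-i$. Hence $\phi$ strictly decreases and the remainder of Step~3 is unchanged.

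With this correction your argument is complete and close in spirit to the paper's, though organized differently. The paper argues by contradiction: it takes $j$ to be the smallest index at which a leftward step occurs, chooses a reference curve $\eta$ on the \emph{left} (in $\alpha_- \cap {\gamma_{j+1}}_-$), and computes $F(\beta,\gamma_{j+1}) = n-j+1$ directly from three instances of additivity, contradicting $d(\beta,\gamma_{j+1}) = n-j-1$. Your version instead places $\eta$ far to the right, first establishes $F(\alpha,\gamma_i)=i$ and $F(\gamma_i,\beta)=n-i$ globally via the triangle inequality, and then reads off monotonicity of the scalar height $\phi$. Both routes reduce to the same additivity bookkeeping; yours is slightly more conceptual at the cost of the extra squeezing argument in Step~1.
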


\begin{proof}
    Suppose for contradiction that the conclusion fails, and let $j$ be the smallest index for which $\gamma_{j+1}$ lies on the left side of $\gamma_j$. Since $\gamma_i$ lies on the right side of $\gamma_{i-1}$ for each $i=1,\dots,j$, it follows from an inductive argument that $F(\alpha,\gamma_j) = j$. Let $\eta$ be a separating curve in $\alpha_-\cap{\gamma_{j+1}}_-$. Then
    \begin{align*}
        F_0(\eta,\beta) &= F_0(\eta,\alpha) + F_0(\alpha,\beta) \\
        F_0(\eta,\gamma_j) &= F_0(\eta,\alpha) + F_0(\alpha,\gamma_j) \\
        F_0(\eta,\gamma_j) &= F_0(\eta,\gamma_{j+1}) + F_0(\gamma_{j+1},\gamma_j).
    \end{align*}
    Lemma \ref{lem:flux is equal to distance} implies $F_0(\alpha,\beta) = d(\alpha,\beta) = n$, and Lemma \ref{lem:flux of neighbors is 1} implies $F_0(\gamma_{j+1},\gamma_j) = 1$. So
    \begin{align*}
        F(\beta,\gamma_{j+1}) &=
        \abs{F_0(\beta,\eta) - F_0(\gamma_{j+1},\eta)} \\&=
        \abs{F_0(\eta,\alpha) + F_0(\alpha,\beta) -F_0(\eta,\gamma_j) + F_0(\gamma_{j+1},\gamma_j)} \\&=
        \abs{F_0(\eta,\alpha) + n - (F_0(\eta,\alpha) + F_0(\alpha,\gamma_j)) + 1)} \\&=
        n-j+1.
    \end{align*}
    Hence, $d(\beta,\gamma_{j+1}) \ge F(\beta,\gamma_{j+1}) = n-j+1$. But recall that $(\gamma_0=\alpha,\gamma_1,\dots,\gamma_n=\beta)$ is geodesic path between $\alpha$ and $\beta$. In particular, $d(\beta,\gamma_{j+1}) = n-(j+1) = n-j-1$. So we have a contradiction.
\end{proof}

The next lemma is reminiscent of Lemma \ref{lem:lassoing a distant puncture moves away from the origin}, but crucially, it is not as powerful because it requires $\alpha$ and $\beta$ to be disjoint.

\begin{lem}
    Let $\alpha$ and $\beta$ be disjoint separating curves such that $[\alpha,\beta]$ is full. If $\alpha(\eta,\lambda)$ is a lasso curve adjacent to $\alpha$ in $\Gamma$, and $\alpha(\eta,\lambda)$ is not contained in $[\alpha,\beta]$, then $d(\alpha(\eta,\lambda),\beta) \ge d(\alpha,\beta)$.
\end{lem}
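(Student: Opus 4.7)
The plan is to argue by contradiction, invoking Lemma~\ref{lem:straight geodesics} essentially directly. Without loss of generality, I would assume that $\beta$ lies on the right side of $\alpha$, so that Lemma~\ref{lem:straight geodesics} applies to the pair $(\alpha,\beta)$.

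Suppose toward contradiction that $d(\alpha(\eta,\lambda),\beta) < d(\alpha,\beta)$. Since $\alpha$ and $\alpha(\eta,\lambda)$ are adjacent in $\Gamma$, the triangle inequality forces
\[
    d(\alpha(\eta,\lambda),\beta) = d(\alpha,\beta) - 1.
\]
I would then concatenate any geodesic from $\alpha(\eta,\lambda)$ to $\beta$ with the edge between $\alpha$ and $\alpha(\eta,\lambda)$ to produce a path from $\alpha$ to $\beta$ of length $d(\alpha,\beta)$. This is a geodesic between $\alpha$ and $\beta$ which passes through $\alpha(\eta,\lambda)$.

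Now I invoke the fullness hypothesis: because $[\alpha,\beta]$ is full, Lemma~\ref{lem:straight geodesics} asserts that every vertex of any geodesic from $\alpha$ to $\beta$ must lie in $[\alpha,\beta]$. In particular, $\alpha(\eta,\lambda)$ would have to be contained in $[\alpha,\beta]$, which contradicts the hypothesis of the lemma. Thus $d(\alpha(\eta,\lambda),\beta) \ge d(\alpha,\beta)$.

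There is no serious obstacle here; Lemma~\ref{lem:straight geodesics} does the real work of confining geodesics between the endpoints of a full subsurface to that subsurface. The role of the lasso construction is merely to produce a separating curve adjacent to $\alpha$ that lies outside $[\alpha,\beta]$, and any such vertex automatically fails to lie on a geodesic from $\alpha$ to $\beta$, so it cannot provide a shortcut. This is the natural analogue, in the general stable avenue setting, of Lemma~\ref{lem:lassoing a distant puncture moves away from the origin} for the flute; the hypothesis that $\alpha(\eta,\lambda)$ escapes $[\alpha,\beta]$ plays the role of the earlier hypothesis that the lassoed puncture lies outside the carrier $K$.
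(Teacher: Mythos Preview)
Your proof is correct and follows essentially the same approach as the paper's own argument: assume the inequality fails, use adjacency to force $d(\alpha(\eta,\lambda),\beta)=d(\alpha,\beta)-1$, build a geodesic from $\alpha$ to $\beta$ through $\alpha(\eta,\lambda)$, and then contradict Lemma~\ref{lem:straight geodesics}. Your explicit mention of the WLOG assumption that $\beta$ lies on the right of $\alpha$ (needed for Lemma~\ref{lem:straight geodesics} as stated) is a nice touch the paper leaves implicit.
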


\begin{proof}
    We prove the contrapositive statement. Let $n = d(\alpha,\beta)$ and suppose $d(\alpha(\eta,\lambda),\beta) < n$. Since $\alpha(\eta,\lambda)$ and $\alpha$ are neighbors in $\Gamma$, we must have $d(\alpha(\eta,\lambda),\beta) = n-1$. Take a geodesic path of length $n-1$ between $\alpha(\eta,\lambda)$ and $\beta$, and include (the edge between $\alpha(\eta,\lambda)$ and) $\alpha$ and  to obtain a path of length $n$ between $\alpha$ and $\beta$. Then this is a geodesic path between $\alpha$ and $\beta$ which contains $\alpha(\eta,\lambda)$. By Lemma~\ref{lem:straight geodesics}, $\alpha(\eta,\lambda)$ is contained in $[\alpha,\beta]$.
\end{proof} 

Lastly, we define the notions of \textit{flux-right} and \textit{flux-left} in the exact same way as in the previous section, and the statements and proofs of Lemma \ref{lem:flux-right facts} all still hold in the more general setting.

\subsection{Hamming distance}\label{sec:hamming distance}

In the previous case of the bi-infinite flute, one of the tools we used was the operation of forgetting a puncture. However, it does not seem to be easy to extend this idea to the more general setting. How does one precisely define ``forgetting a discrete-type end" or ``forgetting a genus" as operations which map separating curves to separating curves? Thus, to compensate for the absence of this tool, we introduce another pseudometric on $\Gamma$ which provides a better lower bound on $d$ than flux does. For a subsurface $T \subset \Sigma$, denote by $\End_d(T)$ the subset of discrete-type ends in $\End(T)$. For each separating curve $\alpha$, put $P(\alpha) = \End_d(\alpha_+)$. 

\begin{defn}\label{defn:hamming-dist}
    The \textit{Hamming distance} between separating curves $\alpha$ and $\beta$, denoted by $H(\alpha,\beta)$, is defined by
    \[
        H(\alpha,\beta) = \abs{P(\alpha) \triangle P(\beta)} + g(\alpha,\beta).
    \]
\end{defn}

To get some intuition behind this definition, suppose that $S$ has zero or infinite genus, so that $g(\alpha,\beta)$ is always 0. Each separating curve $\alpha$ partitions the set of discrete-type ends of $\Sigma$, which is identified with $\bZ$, into two sets, thereby associating to $\alpha$ a bi-infinite binary sequence which stabilizes at 0 in one direction and at 1 in the other direction. Then $H(\alpha,\beta) = \abs{P(\alpha) \triangle P(\beta)}$ is equal to the classical Hamming distance between the two sequences associated to $\alpha$ and $\beta$.

\begin{lem}
    For all separating curves $\alpha$ and $\beta$, if $d(\alpha,\beta) = 1$ then $H(\alpha,\beta) = 1$.
\end{lem}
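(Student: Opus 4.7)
The plan is to unpack $d(\alpha,\beta)=1$ using the definition of $\Gamma$ and then case-split on which model subsurface $T \in \cS = \{T_1,\dots,T_N,T_{N+1}\}$ realizes $[\alpha,\beta]$, showing that in each case exactly one of the two summands of $H$ contributes $1$ and the other contributes $0$.

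First I would reduce to a convenient normal form: by definition of the translatable curve graph, $d(\alpha,\beta)=1$ means $\alpha$ and $\beta$ admit disjoint representatives and $[\alpha,\beta]$ is homeomorphic to some $T \in \cS$. Without loss of generality assume $\beta$ lies on the right side of $\alpha$, so $\beta_+ \subset \alpha_+$ and hence $P(\beta) \subset P(\alpha)$. This gives the clean identification
\[
    P(\alpha) \triangle P(\beta) \;=\; P(\alpha) \setminus P(\beta) \;=\; \End_d([\alpha,\beta]),
\]
the set of discrete-type ends of $[\alpha,\beta]$. Since $\alpha$ and $\beta$ are disjoint, $g(\alpha,\beta) = g_0(\alpha,\beta)$, which by definition is either $\genus([\alpha,\beta])$ (if $S$ has finite genus) or $0$ (if $S$ has infinite genus).

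Next I would analyze the two cases. In the first case, $T \cong T_i$ for some $1 \le i \le N$. By construction, $\End(T_i) \cong V_{f_i} \sqcup \bigsqcup_{j=1}^M V_{c_j}$; the discrete-type contribution comes only from $V_{f_i}$, which contains exactly one discrete-type end (as noted explicitly after the definition of $T_i$), while every $V_{c_j}$ is a stable neighborhood of a Cantor-type end and so contributes none. Thus $|\End_d([\alpha,\beta])|=1$. For the genus summand, $T_i$ has genus $0$ or infinite by construction: if $S$ has finite genus, $T_i$ is forced to have genus $0$ and $g_0=0$; if $S$ has infinite genus, $g_0 \equiv 0$ by definition. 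Either way $g(\alpha,\beta)=0$, giving $H(\alpha,\beta)=1$.

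In the second case, $T \cong T_{N+1}$, which only exists when $S$ has finite positive genus. Then $\End(T_{N+1}) \cong \bigsqcup_j V_{c_j}$ has no discrete-type ends, so $|\End_d([\alpha,\beta])|=0$. On the other hand $T_{N+1}$ has genus $1$, so $g_0(\alpha,\beta)=1$, again yielding $H(\alpha,\beta)=1$. There is no real obstacle here — the proof is a short bookkeeping exercise — the only point requiring any attention is to remember that $\beta$ being on the right of $\alpha$ converts the symmetric difference $P(\alpha)\triangle P(\beta)$ into counting discrete-type ends inside $[\alpha,\beta]$, and that stability of $V_{f_i}$ is precisely what pins the count at exactly one.
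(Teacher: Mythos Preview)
Your proof is correct and follows essentially the same approach as the paper's own proof: both case-split on which $T \in \cS$ realizes $[\alpha,\beta]$, compute $|\End_d([\alpha,\beta])|$ and $g_0(\alpha,\beta)$ in each case, and observe that exactly one summand equals $1$. Your version is slightly more explicit about why the $V_{c_j}$ pieces contribute no discrete-type ends and about handling the finite versus infinite genus dichotomy for $T_i$, but the logic and structure are the same.
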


\begin{proof}
    By definition, $d(\alpha,\beta)=1$ means that $\alpha$ and $\beta$ are disjoint and $[\alpha,\beta]$ is homeomorphic to some $T \in \cS$. If $T=T_i$ for some $i=1,\dots,n$, then $T$ has exactly one discrete-type end $f$ and its genus is either 0 or infinite. Assuming $\alpha$ lies on the left side of $\beta$, this means that $P(\alpha) = P(\beta) \sqcup \s f$. So $P(\alpha) \triangle P(\beta) = \s f$ and $g_0(\alpha,\beta)=0$. If $T=T_{n+1}$, then $T$ has no discrete-type ends and its genus is 1. So $P(\alpha) = P(\beta)$ and $g_0(\alpha,\beta)=1$. In either case, $H(\alpha,\beta) = 1$.
\end{proof}

Moreover, $H$ is indeed a pseudometric. 

\begin{lem}
For all separating curves $\alpha,\beta,\gamma$ in $\Sigma$, we have $H(\alpha,\beta) \le H(\alpha,\gamma) + H(\gamma,\beta)$.
\end{lem}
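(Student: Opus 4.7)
The plan is to decompose $H$ as a sum of two pseudometrics and verify the triangle inequality for each summand separately. Specifically, write
\[
H(\alpha,\beta) = H_1(\alpha,\beta) + g(\alpha,\beta),
\qquad H_1(\alpha,\beta) := |P(\alpha) \triangle P(\beta)|.
\]
Since a sum of pseudometrics is again a pseudometric, it suffices to check the triangle inequality for $H_1$ and $g$ independently and then add.

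For $H_1$, I would invoke the elementary set-theoretic inclusion $A \triangle B \subseteq (A \triangle C) \cup (C \triangle B)$, valid for arbitrary sets. Applied to $P(\alpha), P(\gamma), P(\beta)$ and combined with subadditivity of cardinality on finite sets, this immediately gives
\[
|P(\alpha) \triangle P(\beta)| \le |P(\alpha) \triangle P(\gamma)| + |P(\gamma) \triangle P(\beta)|.
\]
A preliminary point to verify is that these symmetric differences are in fact finite. This should follow because any two separating curves admit a common carrier $K$; stability of $\Sigma$ together with the fact that each equivalence class $E(f_i)$ meets $\End(S)$ in a finite set then forces $\End_d(K)$ to be finite, so $P(\alpha) \triangle P(\beta) \subseteq \End_d(K)$ has well-defined finite cardinality.

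For $g$, I would follow the proof of Lemma \ref{lem:flux has triangle inequality} almost verbatim. Choose a separating curve $\eta$ lying in $\alpha_+ \cap \beta_+ \cap \gamma_+$; such an $\eta$ exists by picking it sufficiently close to $e_+$ relative to a common carrier of $\alpha,\beta,\gamma$. Using the analogues of Lemma \ref{lem:flux additivity} and Lemma \ref{lem:reference curve independence} for $g_0$ (which, as the excerpt notes, hold with identical proofs), followed by the ordinary triangle inequality on $\bR$, we obtain
\[
g(\alpha,\beta) = |g_0(\alpha,\eta) - g_0(\beta,\eta)| \le |g_0(\alpha,\eta) - g_0(\gamma,\eta)| + |g_0(\gamma,\eta) - g_0(\beta,\eta)| = g(\alpha,\gamma) + g(\gamma,\beta).
\]
Summing the two inequalities yields the claim.

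No serious obstacle is expected: the genus half is a direct copy of the flux triangle inequality already proved, and the discrete-end half is a standard set-theoretic calculation. The only mildly delicate point is the finiteness of $P(\alpha) \triangle P(\beta)$ noted above, which is easily handled by the carrier argument.
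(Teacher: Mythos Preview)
Your proposal is correct and follows essentially the same approach as the paper: split $H$ into the symmetric-difference part and the genus part, handle the first via $A \triangle B \subseteq (A \triangle C) \cup (C \triangle B)$, and handle the second exactly as in the flux triangle inequality using a common reference curve $\eta$. Your remark on finiteness of $P(\alpha)\triangle P(\beta)$ is a nice addition that the paper leaves implicit.
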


\begin{proof}
It suffices to show that the inequality holds separately for $\abs{P(\alpha) \triangle P(\beta)}$ and $g(\alpha,\beta)$.
Indeed,
\[
    \abs{P(\alpha) \triangle P(\beta)} \le
    \abs{P(\alpha) \triangle P(\gamma) \cup P(\gamma) \triangle P(\beta)} \le
    \abs{P(\alpha) \triangle P(\gamma)} + \abs{P(\gamma) \triangle P(\beta)}
\]
and letting $\eta$ be a separating curve in $\alpha_+ \cap \beta_+ \cap \gamma_+$, 
\[
    g(\alpha,\beta) = 
    \abs{g_0(\alpha,\eta) - g_0(\beta,\eta)} =
    \abs{g_0(\alpha,\eta) - g_0(\gamma,\eta) + g_0(\gamma,\eta) - g_0(\beta,\eta)} \le
    g(\alpha,\gamma) + g(\gamma, \beta).
\]
\end{proof}

Furthermore, the Hamming distance gives a tighter lower bound on the graph distance $d$ in $\Gamma$. 

\begin{lem}\label{lem:Hamming distance is a lower bound}
For all separating curves $\alpha$ and $\beta$, we have $d(\alpha,\beta) \ge H(\alpha,\beta) \ge F(\alpha,\beta)$.
\end{lem}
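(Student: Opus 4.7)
The first inequality $d(\alpha,\beta) \ge H(\alpha,\beta)$ is routine: mimicking the proof of Lemma~\ref{lem:flux is a lower bound}, since the preceding lemma shows $H(\gamma,\gamma')=1$ for adjacent vertices of $\Gamma$, the triangle inequality applied to any geodesic $(\gamma_0=\alpha,\gamma_1,\ldots,\gamma_n=\beta)$ yields
\[
H(\alpha,\beta) \le \sum_{i=0}^{n-1} H(\gamma_i,\gamma_{i+1}) = n = d(\alpha,\beta).
\]

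The substantive content is the inequality $H(\alpha,\beta) \ge F(\alpha,\beta)$. Since the genus terms $g(\alpha,\beta)$ appear identically in both quantities, my plan is to reduce to the set-theoretic inequality $|P(\alpha) \triangle P(\beta)| \ge p(\alpha,\beta)$ and then compute $p(\alpha,\beta)$ through a carefully chosen reference curve. I would pick $\gamma$ to be a separating curve bounding a sufficiently small stable neighborhood of $e_+$ lying inside $\alpha_+ \cap \beta_+$, so $\gamma$ is disjoint from both $\alpha$ and $\beta$. The key finiteness point is that $[\alpha,\gamma]$ is contained in a finite union of copies of $S$ in $\Sigma = S^{\natural \bZ}$ (because $\alpha$ and $\gamma$ are compact), and each such copy contributes only finitely many discrete-type ends since $E(f_i) \cap \End(S)$ is finite by construction of $\cR$; hence $p_0(\alpha,\gamma) < \infty$, and likewise $p_0(\beta,\gamma) < \infty$.

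Once this is in place, decomposing $\alpha_+ = [\alpha,\gamma] \cup \gamma_+$ glued along $\gamma$ and restricting to discrete-type ends gives the disjoint partition $P(\alpha) = \End_d([\alpha,\gamma]) \sqcup P(\gamma)$ (and similarly for $\beta$), in particular putting $P(\gamma) \subset P(\alpha) \cap P(\beta)$. Writing $A = P(\alpha) \setminus P(\beta)$, $B = P(\beta) \setminus P(\alpha)$, and $C = (P(\alpha) \cap P(\beta)) \setminus P(\gamma)$, this yields the disjoint decompositions $P(\alpha) \setminus P(\gamma) = A \sqcup C$ and $P(\beta) \setminus P(\gamma) = B \sqcup C$, so that $p_0(\alpha,\gamma) - p_0(\beta,\gamma) = |A| - |B|$. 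Consequently,
\[
p(\alpha,\beta) = \bigl||A|-|B|\bigr| \le |A|+|B| = |P(\alpha) \triangle P(\beta)|,
\]
and adding $g(\alpha,\beta)$ to both sides finishes the argument.

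The main issue to address is the bookkeeping needed to keep every cardinality finite so that the arithmetic above is unambiguous; this is precisely handled by the choice of $\gamma$ together with the complementary observation that $|P(\alpha) \triangle P(\beta)|$ is itself always finite. Indeed, any separating curve is compact, so $\alpha$ and $\beta$ simultaneously avoid some pair of stable neighborhoods $V_\pm$ of $e_\pm$; then $V_\pm \subset \alpha_\pm \cap \beta_\pm$ forces $P(\alpha) \triangle P(\beta)$ into $\End_d(\Sigma) \setminus (\End_d(V_+) \cup \End_d(V_-))$, a set that is finite because in $\End(\Sigma)$ each $E(f_i)$ has no accumulation points outside $\{e_+,e_-\}$.
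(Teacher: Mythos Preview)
Your argument is correct and follows essentially the same route as the paper's proof: both reduce the second inequality to $|P(\alpha)\triangle P(\beta)|\ge p(\alpha,\beta)$ via a reference curve $\gamma\subset\alpha_+\cap\beta_+$, then use the decomposition $\End_d([\alpha,\gamma])=P(\alpha)\setminus P(\gamma)$ together with the elementary estimate $|X\triangle Y|\ge\bigl||X|-|Y|\bigr|$ (which you unpack explicitly as $|A|+|B|\ge\bigl||A|-|B|\bigr|$). Your additional paragraph justifying the finiteness of the relevant sets is more careful than the paper, which tacitly works with the finite sets $\End_d([\alpha,\gamma])$ and $\End_d([\beta,\gamma])$ from the outset.
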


\begin{proof}
The first inequality follows from the same argument used in the proof of Lemma \ref{lem:flux is a lower bound}. To get the second inequality, let $\gamma$ be a separating curve in $\alpha_+ \cap \beta_+$. By definition, $p_0(\alpha,\gamma) = \abs{\End_d([\alpha,\gamma])}$ and $p_0(\beta,\gamma) = \abs{\End_d([\beta,\gamma])}$. Since $\gamma_+$ is contained in both $\alpha_+$ and $\beta_+$, 
\[
    P(\alpha) \triangle P(\beta) = 
    \End_d([\alpha,\gamma]) \triangle \End_d([\beta,\gamma]).
\]
Hence,
\begin{align*}
    H(\alpha,\beta) &=
    \abs{P(\alpha) \triangle P(\beta)} + g(\alpha,\beta) \\&=
    \abs{\End_d([\alpha,\gamma]) \triangle \End_d([\beta,\gamma])} + g(\alpha,\beta) \\&\ge
    \abs{\abs{\End_d([\alpha,\gamma])} - \abs{\End_d([\beta,\gamma])}} + g(\alpha,\beta) \\&=
    \abs{p_0(\alpha,\gamma) - p_0(\beta,\gamma)} + g(\alpha,\beta) \\&=
    p(\alpha,\beta) + g(\alpha,\beta) \\&=
    F(\alpha,\beta).
\end{align*}
\end{proof}

We now aim to understand how lassoing affects the Hamming distance between curves.

\begin{lem}\label{lem:lassoing toggles discrete-type ends}
    Let $\alpha$ be a separating curve. If $\alpha(\eta,\lambda)$ is a lasso curve defined by $\alpha$ and some $\eta$ and $\lambda$, then $P(\alpha) \triangle P(\alpha(\eta,\lambda)) = \End_d(C_\eta)$.
\end{lem}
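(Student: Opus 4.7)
The plan is to compute $P(\alpha(\eta,\lambda))$ directly by analyzing the two complementary pieces into which $\alpha(\eta,\lambda)$ cuts $\Sigma$, using the key structural facts already available: $\alpha$ and $\alpha(\eta,\lambda)$ are disjoint, the subsurface $[\alpha,\alpha(\eta,\lambda)]$ has end space equal to $\End(C_\eta)$, and $\alpha(\eta,\lambda)$ is the boundary component of a small tubular neighborhood of $\alpha \cup \lambda' \cup \eta$ that is not isotopic to $\alpha$.

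First I would split into two cases according to which side of $\alpha$ contains $\eta$. Suppose $\eta \subset \alpha_+$. Since $\lambda'$ connects $\eta$ to $\alpha$ without meeting $\alpha$ in its interior, the set $\alpha \cup \lambda' \cup \eta$ lies in $\overline{\alpha_+}$, and thus a sufficiently small tubular neighborhood of it lies in $\overline{\alpha_+}$ as well. Hence $\alpha(\eta,\lambda)$ lies on the right side of $\alpha$, and $[\alpha,\alpha(\eta,\lambda)] \subset \overline{\alpha_+}$. Cutting $\alpha_+$ along $\alpha(\eta,\lambda)$ decomposes it into $[\alpha,\alpha(\eta,\lambda)]$ and $\alpha(\eta,\lambda)_+$, which share only the boundary curve $\alpha(\eta,\lambda)$. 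Passing to end spaces yields
\[
    \End(\alpha_+) = \End([\alpha,\alpha(\eta,\lambda)]) \sqcup \End(\alpha(\eta,\lambda)_+) = \End(C_\eta) \sqcup \End(\alpha(\eta,\lambda)_+).
\]
Restricting to discrete-type ends gives $P(\alpha(\eta,\lambda)) = P(\alpha) \setminus \End_d(C_\eta)$, so $P(\alpha) \triangle P(\alpha(\eta,\lambda)) = \End_d(C_\eta)$.

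Now suppose $\eta \subset \alpha_-$. By the same argument applied on the other side, $\alpha(\eta,\lambda)$ lies on the left side of $\alpha$ and the subsurface $[\alpha(\eta,\lambda),\alpha]$ has end space $\End(C_\eta)$. Then $\alpha(\eta,\lambda)_+$ decomposes along $\alpha$ into $[\alpha(\eta,\lambda),\alpha]$ and $\alpha_+$, giving
\[
    \End(\alpha(\eta,\lambda)_+) = \End(C_\eta) \sqcup \End(\alpha_+),
\]
so $P(\alpha(\eta,\lambda)) = P(\alpha) \sqcup \End_d(C_\eta)$, and again $P(\alpha) \triangle P(\alpha(\eta,\lambda)) = \End_d(C_\eta)$.

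The only genuinely non-routine step is the justification that $\alpha(\eta,\lambda)$ lies on the same side of $\alpha$ as $\eta$, which I expect to be the main (minor) obstacle; everything else is bookkeeping with the disjoint decompositions of end spaces induced by separating curves. This side-of-$\alpha$ claim follows from the explicit tubular-neighborhood description of $\alpha(\eta,\lambda)$: the set $\alpha \cup \lambda' \cup \eta$ is contained in the closure of the component of $\Sigma \setminus \alpha$ containing $\eta$, so a small enough neighborhood is contained there as well, and hence the boundary component of this neighborhood that is not isotopic to $\alpha$ must lie on that same side.
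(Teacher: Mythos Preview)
Your proof is correct and follows essentially the same approach as the paper's own proof: split into the two cases $\eta\subset\alpha_+$ and $\eta\subset\alpha_-$, observe that $\alpha(\eta,\lambda)$ lies on the same side of $\alpha$ as $\eta$, and then read off $\End(\alpha(\eta,\lambda)_+)$ as $\End(\alpha_+)\setminus\End(C_\eta)$ or $\End(\alpha_+)\sqcup\End(C_\eta)$ accordingly. The paper asserts this decomposition directly from the construction, while you supply the extra justification via the tubular-neighborhood description and the known fact $\End([\alpha,\alpha(\eta,\lambda)])=\End(C_\eta)$; this is only a difference in level of detail, not in strategy.
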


\begin{proof}
    First suppose $\eta$ lies on the right side of $\alpha$, so that $\End(C_\eta) \subset \End(\alpha_+)$. It follows from the construction of $\alpha(\eta,\lambda)$ that after lassoing, $\eta$ now lies on the left side of $\alpha(\eta,\lambda)$, and $\End(\alpha(\eta,\lambda)_+) = \End(\alpha_+) \setminus \End(C_\eta)$. Similarly, if $\eta$ lies on the left side of $\alpha$, then $\End(C_\eta)$ and $\End(\alpha_+)$ are disjoint, and $\End(\alpha(\eta,\lambda)_+) = \End(\alpha_+) \sqcup \End(C_\eta)$. Thus, in both cases, $\End(\alpha_+) \triangle \End(\alpha(\eta,\lambda)_+) = \End(C_\eta)$. Considering only the discrete-type ends in these sets then gives the result.
\end{proof}

We observe in the next lemma that given a pair of curves, applying the lassoing procedure to just one of the curves, with respect to a discrete-type end that is on the same side of both curves, increases the Hamming distance between the pair. 

\begin{lem} \label{lem:lassoing increases hamming distance}
Let $\alpha$ and $\beta$ be separating curves. Suppose $\alpha(\eta,\lambda)$ is a lasso curve such that $\eta$ lies on the same side of $\alpha$ and $\beta$, and $C_\eta$ has genus 0 if $S$ has finite genus. Then 
\[
    H(\alpha(\eta,\lambda),\beta) = 
    H(\alpha,\beta) + \abs{\End_d(C_\eta)}.
\]
\end{lem}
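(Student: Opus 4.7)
The plan is to decompose the Hamming distance into its two summands and handle each separately. Write $\alpha' = \alpha(\eta,\lambda)$, so that
\[
    H(\alpha',\beta) - H(\alpha,\beta) =
    \bigl(\abs{P(\alpha') \triangle P(\beta)} - \abs{P(\alpha) \triangle P(\beta)}\bigr) + \bigl(g(\alpha',\beta) - g(\alpha,\beta)\bigr),
\]
and show that the first parenthesized quantity equals $\abs{\End_d(C_\eta)}$ while the second vanishes.

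For the symmetric difference term, I would first apply Lemma \ref{lem:lassoing toggles discrete-type ends} to obtain $P(\alpha) \triangle P(\alpha') = \End_d(C_\eta)$. Because $\eta$ lies on the same side of $\alpha$ and $\beta$, one of two mutually exclusive cases holds: either $\eta$ lies to the right of both (so $\End_d(C_\eta) \subset P(\alpha) \cap P(\beta)$ and lassoing removes $\End_d(C_\eta)$ from $P(\alpha)$), or $\eta$ lies to the left of both (so $\End_d(C_\eta)$ is disjoint from $P(\alpha) \cup P(\beta)$ and lassoing adds $\End_d(C_\eta)$ to $P(\alpha)$). In the first case, $P(\alpha') = P(\alpha) \setminus \End_d(C_\eta)$, and since $\End_d(C_\eta) \subset P(\alpha) \cap P(\beta)$ is disjoint from $P(\alpha) \triangle P(\beta)$, a direct set-theoretic calculation gives
\[
    P(\alpha') \triangle P(\beta) = \bigl(P(\alpha) \triangle P(\beta)\bigr) \sqcup \End_d(C_\eta).
\]
The second case is analogous: $P(\alpha') = P(\alpha) \sqcup \End_d(C_\eta)$ with $\End_d(C_\eta)$ disjoint from both $P(\alpha)$ and $P(\beta)$, again yielding the same disjoint-union identity. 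Taking cardinalities gives the desired increment of $\abs{\End_d(C_\eta)}$.

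For the genus term, the goal is to show $g(\alpha',\beta) = g(\alpha,\beta)$. If $S$ has infinite genus, then $g \equiv 0$ by definition and there is nothing to prove. If $S$ has finite genus, then by hypothesis $\genus(C_\eta) = 0$, so $g_0(\alpha,\alpha') = \genus([\alpha,\alpha']) = \genus(C_\eta) = 0$. Assume $\eta \subset \alpha_+ \cap \beta_+$ (the other case is symmetric); then $\alpha_- \subset \alpha'_-$, so I can pick a separating curve $\gamma \subset \alpha_- \cap \beta_-$, which also lies in $\alpha'_- \cap \beta_-$. Using additivity of $g_0$ along the straight sequence $\gamma,\alpha,\alpha'$ (the analogue of Lemma \ref{lem:flux additivity} for $g_0$), we get $g_0(\gamma,\alpha') = g_0(\gamma,\alpha) + g_0(\alpha,\alpha') = g_0(\gamma,\alpha)$. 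Hence
\[
    g(\alpha',\beta) = \abs{g_0(\alpha',\gamma) - g_0(\beta,\gamma)} = \abs{g_0(\alpha,\gamma) - g_0(\beta,\gamma)} = g(\alpha,\beta).
\]
Combining the two steps yields $H(\alpha',\beta) = H(\alpha,\beta) + \abs{\End_d(C_\eta)}$.

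The only mildly delicate point is the side-bookkeeping in the symmetric difference computation: one has to verify that $\End_d(C_\eta)$ either sits entirely inside $P(\alpha) \cap P(\beta)$ or entirely outside $P(\alpha) \cup P(\beta)$, which is exactly where the hypothesis that $\eta$ lies on the same side of $\alpha$ and $\beta$ is used. Once this dichotomy is in hand, both the symmetric difference identity and the choice of common reference curve $\gamma$ for the genus computation become straightforward.
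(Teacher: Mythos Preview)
Your proof is correct and follows essentially the same approach as the paper: both decompose $H$ into its symmetric-difference and genus summands, use the same case analysis (right/left) for the symmetric-difference identity $P(\alpha')\triangle P(\beta) = (P(\alpha)\triangle P(\beta))\sqcup \End_d(C_\eta)$, and handle the genus term via the hypothesis $\genus(C_\eta)=0$. If anything, your treatment of the genus term is slightly more explicit than the paper's, which simply asserts ``it follows that $g(\alpha(\eta,\lambda),\beta) = g(\alpha,\beta)$'' without writing out the reference-curve-and-additivity argument you supply.
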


\begin{proof}
    If $S$ has infinite genus, then $g(\alpha(\eta,\lambda),\beta) = 0 = g(\alpha,\beta)$. On the other hand, if $S$ has finite genus, then $C_\eta$ has genus 0 by the hypothesis, and it follows that $g(\alpha(\eta,\lambda),\beta) = g(\alpha,\beta)$. 
    
    By the proof of Lemma \ref{lem:lassoing toggles discrete-type ends}, $P(\alpha(\eta,\lambda)) = P(\alpha) \setminus \End_d(C_\eta)$ if $\eta$ lies on the right side of $\alpha$, and $P(\alpha(\eta,\lambda)) = P(\alpha) \sqcup \End_d(C_\eta)$ if $\eta$ lies on the left side of $\alpha$. In either case, whether $\eta \subset \alpha_+ \cap \beta_+$ or $\eta \subset \alpha_- \cap \beta_-$,
    \[
        P(\alpha(\eta,\lambda)) \triangle P(\beta) = (P(\alpha) \triangle P(\beta)) \sqcup \End_d(C_\eta).
    \]
    Altogether, we get
    \begin{align*}
        H(\alpha(\eta,\lambda),\beta) &=
        \abs{P(\alpha(\eta,\lambda)) \triangle P(\beta)} + g(\alpha(\eta,\lambda),\beta) \\&=
        \abs{P(\alpha) \triangle P(\beta)} + \abs{\End_d(C_\eta)} + g(\alpha,\beta) \\&=
        H(\alpha,\beta) + \abs{\End_d(C_\eta)}.
    \end{align*}
\end{proof}

\subsection{Main theorem}

Now we are ready to prove the main theorem.

\begin{thm}
    If $\Sigma$ is a stable avenue surface with a discrete-type end, then $\Map(\Sigma)$ is one-ended.
\end{thm}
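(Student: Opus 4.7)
By Theorem~\ref{thm:mapping class group is quasi-isometric to its translatable curve graph} and Proposition~\ref{prop:number of ends is a qi invariant}, it suffices to show that $\Gamma=\cT\cC(\Sigma)$ is one-ended. The plan is to adapt the strategy of Theorem~\ref{thm:flute case}, with the crucial conceptual replacement of flux by the Hamming distance $H$ (using $d\geq H\geq F$) at the moment of bounding distance after lassoing. We will invoke Lemma~\ref{lem:one-ended criterion} with $f(R)=3R+s$, where $s=F_0(\beta_0,h(\beta_0))$ for a fixed translation $h$ of $\Sigma$ (exists by \cite[Theorem 5.3]{Schaffer-Cohen}) and a fixed separating curve $\beta_0$ that bounds a copy of $S$ in the decomposition $\Sigma=S^{\natural\bZ}$ associated to $h$. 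So, given $\alpha,\beta\in V(\Gamma)$ with $d(\alpha,o)=d(\beta,o)=f(R)$, we must connect $\alpha$ and $\beta$ by a path in $\Gamma$ disjoint from $B(o,R)$. After applying transitivity of $\Map(\Sigma)$ and relabeling, assume $\beta=\beta_0$; after possibly swapping sides via Lemma~\ref{lem:flux-right facts}(2), assume $\beta$ is flux-right of $o$.

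The first step will be to translate $\beta$ past $\alpha$. Set $\beta'=h^k(\beta)$ with $k$ large enough that $\beta'$ lies on the right of both $\alpha$ and $o$ and $d(o,\beta')>2R+1$. Each $[h^i(\beta),h^{i+1}(\beta)]$ is homeomorphic to $S$, hence full, so Lemma~\ref{lem:flux is equal to distance} produces a straight geodesic of length $s$ between consecutive iterates; concatenation yields a path from $\beta$ to $\beta'$ in $\Gamma$. I will verify that this concatenated path avoids $B(o,R)$ by splitting into two regimes: for vertices within the first $R/s$ translation-blocks, the triangle inequality from $\beta$ applies using $f(R)>2R+s$; for vertices beyond, iterated use of Lemma~\ref{lem:flux-right facts}(4,6) together with Lemma~\ref{lem:flux additivity} yields $F(o,\delta)\geq is>R$ for any $\delta$ on the geodesic between $h^i(\beta)$ and $h^{i+1}(\beta)$.

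The second step will construct and push a straight geodesic from $\alpha$ to $\beta'$. Since $[\alpha,\beta']$ contains a full copy of $S$, it is full, and Lemma~\ref{lem:flux is equal to distance} gives a straight geodesic $(\gamma_0=\alpha,\gamma_1,\ldots,\gamma_n=\beta')$. We push this geodesic outside $B(o,R)$ by $T:=R+1$ iterations of lassoing. Fix a carrier $K$ of $o$, $\alpha$, $\beta'$. Using $N\geq 1$ (the hypothesis that $\Sigma$ has a discrete-type end), select mutually disjoint subsurfaces $C_{\eta^{(1)}},\ldots,C_{\eta^{(T)}}$ lying to the right of $K$, each homeomorphic to $\widehat{T_i}$ for some $i\leq N$, so that $|\End_d(C_{\eta^{(j)}})|=1$. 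For each $j=1,\ldots,T$, pick an arc $\lambda^{(j)}$ with one endpoint on $\eta^{(j)}$ that crosses every $\gamma_i^{(j-1)}$ exactly once (with $\gamma_i^{(0)}:=\gamma_i$), and form $\gamma_i^{(j)}:=\gamma_i^{(j-1)}(\eta^{(j)},\lambda^{(j)})$. By Lemma~\ref{lem:lassoing increases hamming distance}, applied with reference curve $o$ (noting that each $\eta^{(j)}$ lies on the same side of every $\gamma_i^{(j-1)}$ and $o$), each iteration adds exactly $1$ to $H(o,\gamma_i^{(j)})$; hence $H(o,\gamma_i^{(T)})\geq T=R+1$, so $d(o,\gamma_i^{(T)})>R$ by Lemma~\ref{lem:Hamming distance is a lower bound}.

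Finally, concatenate: the path $\alpha\to\gamma_0^{(1)}\to\cdots\to\gamma_0^{(T)}$, then $(\gamma_0^{(T)},\ldots,\gamma_n^{(T)})$, then $\gamma_n^{(T)}\to\cdots\to\gamma_n^{(1)}\to\beta'$, then the step-one path from $\beta'$ to $\beta$. The intermediate vertices $\gamma_0^{(j)}$ and $\gamma_n^{(j)}$ for $1\leq j\leq T-1$ avoid $B(o,R)$ by the triangle inequality, using $d(o,\alpha)=f(R)>2R+1$ together with $d(\alpha,\gamma_0^{(j)})\leq j\leq T=R+1$, and symmetrically with $\beta'$. The main technical obstacle I anticipate is the lasso-arc bookkeeping: choosing the arcs $\lambda^{(j)}$ so that at each iteration they cross every curve on the evolving straight path exactly once, and ensuring the new lasso curves are pairwise disjoint and themselves form a straight path of the same length. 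This is a generalization of the analogous construction in Section~\ref{sec:lasso and flux} and should go through with care but no new ideas. The key conceptual point driving the proof is that the Hamming distance (rather than flux) provides the correct lower bound under lassoing: lassoing can leave flux unchanged, but it strictly increases Hamming distance whenever it picks up a discrete-type end on a consistent side, which is exactly what the hypothesis $N\geq 1$ guarantees.
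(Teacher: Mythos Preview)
Your proposal is correct and follows essentially the same strategy as the paper's own proof: reduce to the translatable curve graph, use transitivity to place $\beta$ on a standard curve, translate along $h$ to put $\beta'$ to one side of $\alpha$ while staying outside $B(o,R)$, take a straight geodesic from $\alpha$ to $\beta'$ via Lemma~\ref{lem:flux is equal to distance}, and then iterate lassoing of discrete-type ends to push that geodesic out of $B(o,R)$ via Lemma~\ref{lem:lassoing increases hamming distance} and Lemma~\ref{lem:Hamming distance is a lower bound}. The only differences are cosmetic: the paper takes $f(R)=6R$ and performs $2R$ lasso iterations (so both the triangle inequality and the Hamming bound are invoked on the endpoint paths $P,Q$), whereas you take $f(R)=3R+s$ and $T=R+1$ iterations (so the triangle inequality alone handles the endpoint paths); and you spell out the translation-path estimate that the paper leaves implicit by pointing to Theorem~\ref{thm:flute case}.
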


\begin{proof}
    By Theorem \ref{thm:mapping class group is quasi-isometric to its translatable curve graph}, it suffices to prove that $\Gamma$ is one-ended. Fix a vertex $o \in \Gamma$ and let $R > 0$ be any integer. To prove that $\Gamma$ is one-ended, it suffices by Lemma \ref{lem:one-ended criterion} to show that for any $\alpha,\beta \in \Gamma$ with $d(\alpha,o) = d(\beta,o) = 6R$, there exists a path from $\alpha$ to $\beta$ which is disjoint from $B = B(o,R)$.

    Like in the proof of Theorem \ref{thm:flute case}, the transitive action of $\Map(\Sigma)$ on $\Gamma$ allows us to assume that $\beta$ is the boundary curve of some copy of $S$. Then we may use the translation $h$ to push $\beta$ as far towards $e_+$ or $e_-$ as we want. In total, we may assume without loss of generality that $\beta$ lies on the right side of $\alpha$ such that $[\alpha,\beta]$ is full and $d(\beta,o) \ge 3R$. Then Lemma \ref{lem:flux is equal to distance} gives a straight path $(\gamma_0=\alpha,\gamma_1,\dots,\gamma_k=\beta)$ from $\alpha$ and $\beta$. Since $\Sigma$ has at least one class of discrete-type ends, there exists a lasso $(\eta,\lambda)$ such that $\eta \subset \beta_+ \cap o_+$, $C_\eta$ is homeomorphic to $\widehat T_i$ for some $1 \le i \le n$, and $\lambda$ intersects each $\gamma_i$ exactly once. Consider the lasso path $(\gamma_0(\eta,\lambda),\dots,\gamma_k(\eta,\lambda))$. Note that $\alpha$ and $\gamma_0(\eta,\lambda)$ are neighbors in $\Gamma$, and so are $\beta$ and $\gamma_k(\eta,\lambda)$. Also, for each $i=0,\dots,k$, Lemma \ref{lem:lassoing increases hamming distance} implies that $H(\gamma_i(\eta,\lambda),o) = H(\gamma_i,o) + 1$. Since the lasso path is again a straight path, this procedure may be iterated indefinitely, and each iteration produces a new lasso path whose vertices have a Hamming distance from $o$ increased by 1. Repeat the procedure for a total of $2R$ iterations and denote by $(\gamma_0',\dots,\gamma_k')$ the final path obtained after the last iteration. Then for each $i=0,\dots,k$,
    \[
        d(\gamma_i',o) \ge
        H(\gamma_i',o) =
        H(\gamma_i,o) + 2R >
        R.
    \]
    So $(\gamma_0',\dots,\gamma_k')$ is disjoint from $B$. Furthermore, the endpoints of the lasso paths constructed in each iteration yield a path $P$ from $\alpha$ to $\gamma_0'$ and a path $Q$ from $\beta$ to $\gamma_k'$. For each vertex $u$ among the first $R+1$ vertices along $P$, $d(u,o) \ge 2R$ because $d(\alpha,o),d(\beta,o) \ge 3R$. For each vertex $v$ among the remaining vertices of $P$,
    \[
        d(v,o) \ge H(v,o) \ge H(\alpha,o) + R+1 > R.
    \]
    So $P$, and similarly $Q$, are also disjoint from $B$. The concatenation of all three paths then is a path from $\alpha$ to $\beta$ which is disjoint from $B$.
\end{proof}

\bibliographystyle{alpha}
\bibliography{references.bib}

\end{document}